\tikzset{
v/.style={draw, fill, circle, minimum size=1.5mm, inner sep=0},
e/.style={very thick}
}
\DeclareMathOperator{\im}{im}
\DeclareMathOperator{\coker}{coker}
\DeclareMathOperator{\Hom}{Hom}
\DeclareMathOperator{\End}{End}
\providecommand{\id}{\ensuremath\mathrm{id}}
\DeclareMathOperator{\Aut}{Aut}
\DeclareMathOperator{\AutF}{AutF}
\DeclareMathOperator{\GL}{GL}
\DeclareMathOperator{\IA}{IA}
\DeclareMathOperator{\Mod}{Mod}
\DeclareMathOperator{\Sp}{Sp}
\DeclareMathOperator{\Fun}{Fun}
\DeclareMathOperator{\op}{op}
\DeclareMathOperator{\Lk}{Lk}
\DeclareMathOperator{\Ind}{Ind}
\DeclareMathOperator{\Res}{Res}
\DeclareMathOperator{\rk}{rk}
\DeclareMathOperator{\Fix}{Fix}
\DeclareMathOperator{\inc}{inc}
\DeclareMathOperator{\Lan}{Lan}
\DeclareMathOperator{\coeq}{coeq}
\DeclareMathOperator{\colim}{colim}
\providecommand{\FI}{\ensuremath\mathsf{FI}}
\providecommand{\VIC}{\ensuremath\mathsf{VIC}}
\providecommand{\SI}{\ensuremath\mathsf{SI}}
\providecommand{\A}{\ensuremath\mathcal{A}}
\providecommand{\C}{\ensuremath\mathcal{C}}
\providecommand{\D}{\ensuremath\mathcal{D}}
\providecommand{\G}{\ensuremath\mathcal{G}}
\providecommand{\Cmod}{\ensuremath{\mathcal{C}\mathsf{-mod}}}
\providecommand{\CSet}{\ensuremath{\mathcal{C}\mathsf{-Set}}}
\providecommand{\xmod}[1]{\ensuremath{#1\mathsf{-mod}}}
\providecommand{\Set}{\ensuremath \mathsf{Set}}
\providecommand{\hooklongrightarrow}{\lhook\joinrel\longrightarrow}
\providecommand{\twoheadlongrightarrow}{\relbar\joinrel\twoheadrightarrow}
\providecommand{\inject}{\hooklongrightarrow}
\providecommand{\surject}{\twoheadlongrightarrow}
\providecommand{\quot}[2]{{\raisebox{.2em}{$#1\!\!$}\left/\raisebox{-.2em}{$\!#2$}\right.}}
\newcommand{\doublequot}[3]{{\left.\raisebox{-.2em}{$#1\!$}\right\backslash\raisebox{.2em}{$\!#2\!$}\left/\raisebox{-.2em}{$\!#3$}\right.}}
\providecommand{\NN}{\ensuremath\mathbb N}
\providecommand{\ZZ}{\ensuremath\mathbb Z}
\providecommand{\QQ}{\ensuremath\mathbb Q}
\definecolor{grey}{gray}{.5}
\providecommand{\con}[1]{\textbf{\textup{#1}}}
\numberwithin{thmcounter}{section}
\newaliascnt{thmauto}{thmcounter}
\newaliascnt{Defauto}{thmcounter}
\newaliascnt{exauto}{thmcounter}
\newaliascnt{exsauto}{thmcounter}
\newaliascnt{lemauto}{thmcounter}
\newaliascnt{propauto}{thmcounter}
\newaliascnt{corauto}{thmcounter}
\newaliascnt{remauto}{thmcounter}
\theoremstyle{plain}
\newtheorem{thm}[thmauto]{Theorem}
\newtheorem{Def}[Defauto]{Definition}
\newtheorem{ex}[exauto]{Example}
\newtheorem{exs}[exsauto]{Examples}
\newtheorem{lem}[lemauto]{Lemma}
\newtheorem{prop}[propauto]{Proposition}
\newtheorem{cor}[corauto]{Corollary}
\newtheorem{rem}[remauto]{Remark}
\newtheorem{thmA}{Theorem}
\newtheorem*{rem*}{Remark}
\newtheorem*{thm*}{Theorem}
\newtheorem*{exs*}{Examples}
\numberwithin{equation}{section}
\let\originalleft\left
\let\originalright\right
\renewcommand{\left}{\mathopen{}\mathclose\bgroup\originalleft}
\renewcommand{\right}{\aftergroup\egroup\originalright}
\begin{document}

\title{Central stability homology}

\author{Peter Patzt}
\address{Institut f\"ur Mathematik, Freie Universit\"at Berlin, Germany}
\email{peter.patzt@fu-berlin.de}
\date{April 2017}
\subjclass[2010]{55N35 (Primary), 18A25, 20L05, 55T05 (Secondary)}

\begin{abstract}
We give a new categorical way to construct the central stability homology of Putman and Sam and explain how it can be used in the context of representation stability and homological stability. In contrast to them, we cover categories with infinite automorphism groups. We also connect central stability homology to Randal-Williams and Wahl's work on homological stability. We also develop a criterion that implies that functors that are polynomial in the sense of Randal-Williams and Wahl are centrally stable in the sense of Putman.
\end{abstract}
  
\maketitle
\setcounter{tocdepth}{1}    
\tableofcontents

%%%%%%%%%% Section  %%%%%%%%%%%

\section{Introduction}

In 1960 Nakaoka \cite{N} proved the pioneering result that for the symmetric groups
\[ H_i(\mathfrak S_{n-1}) \longrightarrow H_i(\mathfrak S_{n})\]
is an isomorphism for all $n$ large enough in comparison to $i$. Quillen \cite{QuillenSymmetricGroups} was able to prove this result, using a new method that has since been generalized in many ways. He was considering a highly connected simplicial complex $X_n$ on which $\mathfrak S_n$ acts and the groups that stabilizes simplices pointwise are isomorphic to $\mathfrak S_m$ for large enough $m\le n$. For the symmetric groups $X_n = \Delta^{n-1}$ the $(n-1)$--dimensional standard simplex works. Randal-Williams and Wahl \cite{RW} turned this game around: They construct a semisimplicial set $W_n$ whose $p$--simplices are given by 
\[ \mathfrak S_n/\mathfrak S_{n-p-1}\]
such that it is a transitive $\mathfrak S_n$--set whose stabilizers are $\mathfrak S_{n-p-1}$. Then they prove that $W_n$ is highly connected. More precisely, let $\Delta'_+$ be the category of finite ordered sets (including the empty set) and strictly monotone maps and $\FI$ the category of finite sets and injection. Being a semisimplicial set, $W_n$ is a functor from $(\Delta'_+)^{\op}$ to the category of sets with
\[ \{0,1,\dots,p\} \longmapsto \mathfrak S_n/\mathfrak S_{n-p-1}.\]
Because $ \mathfrak S_n/\mathfrak S_{n-p-1}$ is as an $\mathfrak S_n$--set the set of injective maps
\[ \{0,1,\dots,p\} \inject \{ 1, 2, \dots, n\},\]
this functor factors through $\FI^{\op}$ the opposite category of $\FI$:
\[ (\Delta'_+)^{\op} \longrightarrow \FI^{\op} \stackrel{\Hom_{\FI}(-,\{1,\dots,n\})}{\longrightarrow} \Set\]

The category $\FI$ has taken up a prominent role in the theory of representation stability. Established by Church and Farb in \cite{CF}, representation stability characterizes sequences $(V_n)_{n\in \NN}$ of representations of $\mathfrak S_n$ or other sequences of groups, that admit a uniform description as a direct sum of irreducible representation once $n$ is large enough. For example, the permutation representation $\QQ^n$ is the direct sum of the trivial representation and the standard representation for $n\ge2$. The permutation representation can also be realized as an $\FI$--module,
by which we mean a functor from $\FI$ to $\QQ$--vector spaces (and later more generally a functor from $\FI$ to $R$--modules). This can be done by sending a finite set $S$ to the finite dimensional vector space with basis $S$. For an $\FI$--module $V$, we will denote the images of $\{1,\dots, n\}$ by $V_n$.

\subsection*{Notions of representation stability}  Church and Farb's original definition \cite[Def 2.3]{CF} was given in terms of irreducible representations. Later Church, Ellenberg and Farb \cite{CEF} proved that a finitely generated $\FI$--module is representation stable. Putman \cite{P} noticed that for an $\FI$--module $V$ one cannot expect that $V_{n} \cong \Ind^{\mathfrak S_{n}}_{\mathfrak S_{n-1}} V_{n-1}$, because the transposition of $(n-1\ n)$ acts trivially on the image of $V_{n-2}$ in $V_n$. But he calls an $\FI$--module $V$ centrally stable if $V_{n}$ is isomorphic to the largest quotient of $ \Ind^{\mathfrak S_{n}}_{\mathfrak S_{n-1}} V_{n-1}$ such that $(n-1\ n)$ acts trivially on the image of $V_{n-2}$ for all $n$ large enough. This quotient can be expressed as a coequalizer
\[ \coeq\big( \Ind^{\mathfrak S_{n}}_{\mathfrak S_{n-2}} V_{n-2} \rightrightarrows \Ind^{\mathfrak S_{n}}_{\mathfrak S_{n-1}} V_{n-1} \big).\]
Let $\FI^{\le N}$ be the full subcategory of $\FI$ whose objects all have cardinality at most $N$ and denote the inclusion by 
\[ \inc_N\colon \FI^{\le N} \inject \FI.\]
Church, Ellenberg, Farb and Nagpal \cite{CEFN} then proved that an $\FI$--module is centrally stable if and only if it is presented in $\FI^{\le N}$ (having a presentation by sums of representable functors from $\FI^{\le N}$, also see \autoref{prop:presdeg}\eqref{item:presdeg}) for some sufficiently large $N$.
Later Putman and Sam \cite{PS} also gave a categorical concept the name central stability. They call an $\FI$--module $V$ is centrally stable if there is an $N$ large enough such that $V$ is the left Kan extension of its restriction to $\FI^{\le N}$:
\[ V \cong \Lan_{\inc_N} (V\circ \inc_N)\]
In other words, for any $\FI$--module $W$ and any morphism (ie natural transformation)
\[ f \colon V\circ \inc_N \longrightarrow W\circ \inc_N\]
between the restrictions, there is a unique morphism $V\to W$ which induces $f$. In a more general setup, Djament \cite{Dj} proves that an $\FI$--module fulfills this condition if and only if it is presented in $\FI^{\le N}$ and is thereby equivalent to the original definition from \cite{P}. Unfortunately the two notions of central stability differ in natural generalizations of the category $\FI$, we consider later. We compare many different notions in \autoref{sec:notions}.

\subsection*{The central stability chain complex}
The term \emph{central stability chain complex} was first coined by Putman \cite{P} for a sequence $(V_n)_{n\in\NN}$ of $\mathfrak S_n$--representations. He defined it  by
\begin{equation}\label{eq:firstcomplex}
  \dots \to\Ind_{\mathfrak S_{n-p-1}\times \mathfrak S_{p+1}}^{\mathfrak S_n} V_{n-p-1} \otimes \mathcal A_{p+1}\to\Ind_{\mathfrak S_{n-p}\times \mathfrak S_p}^{\mathfrak S_n} V_{n-p} \otimes \mathcal A_p \to \dots
\end{equation}
where $\mathcal A_p$ is the sign representation of $\mathfrak S_p$. It turns out, that for an $\FI$--module $V$ this chain complex is given by
\[  \dots \to \bigoplus_{\substack{S\subset \{1, \dots, n\}\\ |S|=p+1}} V(\{1, \dots, n\} \setminus S)\to \bigoplus_{\substack{S\subset \{1, \dots, n\}\\ |S|=p}} V(\{1, \dots, n\} \setminus S) \to \dots \]
where the differential is the alternated sum of face maps
\[ d_i\colon V\big(\{1, \dots, n\} \setminus \{ j_0< \dots< j_p\} \big) \longrightarrow V\big(\{1, \dots, n\} \setminus \{ j_0< \dots<\hat{j_i}< \dots< j_p\} \big)\]
that is induced by the inclusion of sets. This complex also computes \emph{$\FI$--homology} -- the derived functor analyzed by Church and Ellenberg \cite{CE}.

There is a related chain complex
\begin{equation}\label{eq:secondcomplex}
 \dots \to \!\!\!\!\!\!\!\!\!\!\!\!\!\!\!\! \bigoplus_{f\colon \{0,1,\dots,p\} \inject\{1,\dots, n\}} \!\!\!\!\!\!\!\!\!\!\!\!\!\!\!\!V(\{1,\dots,n\} \setminus \im f) \to  \!\!\!\!\!\!\!\!\!\!\!\!\!\!\!\!\bigoplus_{f\colon \{0,1,\dots,p-1\} \inject\{1,\dots, n\}} \!\!\!\!\!\!\!\!\!\!\!\!\!\!\!\!V(\{1,\dots,n\} \setminus \im f) \to \dots
\end{equation}
for $\FI$--modules $V$. It was first mentioned by Church, Ellenberg, Farb and Nagpal \cite{CEFN} and considered in more detail by Putman and Sam \cite{PS}. One notes
\begin{equation}\label{eq:complexInd}
\bigoplus_{f\colon \{0,1,\dots,p\} \inject\{1,\dots, n\}} V(\{1,\dots,n\} \setminus \im f) \cong \Ind_{\mathfrak S_{n-p}}^{\mathfrak S_n} V(\{1,\dots, n-p\}).
\end{equation}
To recover Putman's chain complex \eqref{eq:firstcomplex}, one factors out the $\mathfrak S_{p+1}$--action.
The second complex \eqref{eq:secondcomplex} computes what we will call \emph{central stability homology} of an $\FI$--module in this paper. 

Both complexes are related to the notions of representation stability. For example, the augmentation map
\[ \Ind^{\mathfrak S_n}_{\mathfrak S_{n-1}} V_{n-1} \longrightarrow V_n\]
is surjective for all $n> N$ if and only if $V$ is generated in $\FI^{\le N}$.

A similar connection can be found for central stability. By definition 
\[ V_n \cong \coeq( \Ind^{\mathfrak S_{n}}_{\mathfrak S_{n-2}} V_{n-2} \rightrightarrows \Ind^{\mathfrak S_{n}}_{\mathfrak S_{n-1}} V_{n-1} )\]
if and only if the part
\[ \Ind_{\mathfrak S_{n-2}}^{\mathfrak S_n} V_{n-2} \to \Ind_{\mathfrak S_{n-1}}^{\mathfrak S_n} V_{n-1} \to V_n \to 0\]
of \eqref{eq:secondcomplex} is exact. Coincidently this is true if and only if the part
\[ \Ind_{\mathfrak S_{n-2}\times \mathfrak S_{2}}^{\mathfrak S_n} V_{n-2} \otimes \mathcal A_{2}\to \Ind_{\mathfrak S_{n-1}\times \mathfrak S_{1}}^{\mathfrak S_n} V_{n-1} \otimes \mathcal A_{1} \to V_n \longrightarrow 0\]
of \eqref{eq:firstcomplex} is exact. The remaining parts of the two complexes may very well differ. 

Such a generalization was done for both complexes by Putman and Sam \cite{PS} for modules over complemented categories (see \autoref{Def:weakly complemented category}). $\FI$ is an example of a complemented category.

Let us plug in the constant $\FI$--module $V(S) = \QQ$ that sends all maps to the identity. We find that \eqref{eq:secondcomplex} is the chain complex associated to $W_n$ from above. Whereas \eqref{eq:firstcomplex} computes to be the simplicial complex of the standard simplex $\Delta^{n-1}$. Randal-Williams and Wahl \cite{RW}  gave a general construction of the semisimplicial set $W_n$ for a homogeneous category (see \autoref{Def:homogeneouscat}). $\FI$ is also an example of a homogeneous category. Moreover, they have a general construction of a simplicial complex $S_n$ whose simplicial chain complex is a generalization of \eqref{eq:firstcomplex}.

Let $\C$ be a small monoidal category with initial unit and let $\D$ be a cocomplete category. In  \autoref{sec:centralstab}, we give a construction of a functor
\[ \Fun(\C,\D)  \longrightarrow \Fun(\C,\Fun((\Delta'_+)^{\op},\D))\]
that turns a functor from $\C$ to $\D$ into a functor from $\C$ to augmented semisimplicial objects in $\D$. This generalizes Randal-Williams and Wahl's construction of the augmented semisimplicial set $W_n$ and the augmented semisimplicial module in \eqref{eq:secondcomplex}. For example plugging in the constant functor
\[ \Hom_{\FI}(\emptyset, -)\colon \FI \longrightarrow \Set,\]
we get a functor from $\FI$ to augmented semisimplicial sets with
\[ \{1,\dots, n\} \longmapsto W_n.\]
In our construction functorial dependencies become more visible than in previously existing literature. If $\D$ is abelian, we denote the associated chain complex for a functor $F\colon \C \to \D$ by $\widetilde C_*(F)$ and its homology by $\widetilde H_*(F)$. This is what we call the \emph{central stability chain complex} and the \emph{central stability homology} in this paper. When $V$ is an $\FI$--modules it computes to be
\[ \widetilde C_p(V)(S) \cong \bigoplus_{f\colon \{0,1,\dots,p\} \inject S} V(S \setminus \im f)\]
as in \eqref{eq:secondcomplex}.

\subsection*{Stability categories}
\cite{PS} and \cite{RW} generalize $\FI$ by weakly complemented categories (see \autoref{Def:weakly complemented category}) and homogeneous categories (see \autoref{Def:homogeneouscat}), respectively. We will work with a different generalization of $\FI$ that we call \emph{stability categories} which are both weakly complemented and homogeneous. In  \autoref{sec:stabcat}, we define \emph{stability groupoids}, that are sequences of groups $\G = (G_n)_{n\in\NN}$ together with a monoidal structure
\[ \oplus \colon G_m\times G_n \inject G_{m+n}\]
and some mild condition (see  \autoref{Def:stability groupoid}). We then use a construction of Quillen, which is also used in \cite{RW}, to get a category $U\G$ whose objects are the natural numbers and whose homomorphisms are
\[ \Hom_{U\G}(m,n) \cong \begin{cases} G_n/G_{n-m} &m\le n\\ \emptyset &m>n.\end{cases}\]
We call $U\G$ a \emph{stability category} if $\G$ is braided. All examples considered by \cite{PS} and  \cite{RW}  are covered in this setup:

\begin{exs}\label{exs:stabcat} Here is a list of some braided stability groupoids $\G$ and their stability categories $U\G$.
\begin{enumerate}
\item The symmetric groups $\mathfrak S= (\mathfrak S_n)_{n\in\NN}$ is a symmetric stability groupoid and its stability category $U\mathfrak S$ is a skeleton of $\FI$.
\item Fix a commutative ring $R$. The general linear groups $\GL(R) = (\GL_n(R))_{n\in\NN}$  is a symmetric stability groupoid and its stability category $U\GL(R)$ is a skeleton of $\VIC_R$ whose objects are finitely generated free $R$--modules and its morphisms $R^m\to R^n$ are given by pairs $(f,C)$ of a monomorphism $f\colon R^m \to R^n$ and a free complement $C\oplus \im f = R^n$.
\item Fix a commutative ring $R$. The symplectic groups $\Sp(R)=(\Sp_{2n}(R))_{n\in \NN}$ is a symmetric stability groupoid and its stability category $U\Sp(R)$ is a skeleton of $\SI_R$ whose objects are finitely generated free symplectic $R$--modules and its morphisms are isometries.
\item The automorphism groups of free groups $\AutF=(\Aut F_n)_{n\in \NN}$ is a symmetric stability groupoid and its stability category $U\AutF$ is given by pairs $(f,C)$ of a monomorphism $f\colon F_m \to F_n$ and a complement $C* \im f = F_n$.
\item There is a braided stability groupoid for the braid groups $\beta = (\beta_n)_{n\in\NN}$. Its monoidal structure is described in \autoref{ex:Symbraid}.
\item There is a braided stability groupoid for the mapping class groups $\Mod\Sigma = (\Mod \Sigma_{g,1})_{g\in \NN}$ of connected, oriented surfaces of genus $g\in\NN$ and one boundary component. Its monoidal structure is described in \cite[Sec 5.6]{RW}. A description of the corresponding stability categories of $\beta$ and $\Mod\Sigma$ can also be found in \cite[Sec 5.6]{RW}.
\end{enumerate}
\end{exs}

From now on, we call a functor from $U\G$ to the category of $R$--modules for some fixed commutative ring $R$ a \emph{$U\G$--module}. In  \autoref{sec:central stability and stab cats}, we compute the central stability complex of a $U\G$--module $V$ very concretely to be isomorphic to 
\[  \widetilde C_p(V)_n \cong \bigoplus_{f\in\Hom_{U\G}(p+1,n)} V_{n-p-1} \cong \Ind_{G_{n-p-1}}^{G_n} V_{n-p-1}.\]
This complex clearly generalizes \eqref{eq:secondcomplex} and \eqref{eq:complexInd}.

The stability category $U\G$ of a braided stability groupoid $\G$ is monoidal (see \cite[Prop 2.6]{RW} or \autoref{prop:stabcat is homogeneous}) and has an initial unit $0$ as
\[ \Hom_{U\G}(0, n) \cong G_n/G_{n-0}\cong \{*\}.\]
That means
\[ R\Hom_{U\G}(0,-) \colon U\G \longrightarrow \xmod{R}\]
is the functor sending all objects to $R$ and all morphisms to the identity on $R$. 
The connectivity condition on $U\G$ used in \cite{RW} can be described by the following condition.
\begin{description}
\item[H3] The central stability homology $\widetilde H_i(R\Hom_{U\G}(0,-))_n =0$ for all $i\ge -1$ and all $n$ large enough in comparison to $i$.
\end{description}

All stability categories in \autoref{exs:stabcat} satisfy \con{H3}. Also, Putman and Sam's \cite[Thm 3.7]{PS} implies \con{H3} for all stability categories $U\G$ that fulfill the noetherian condition that every submodule of a finitely generated $U\G$--module is again finitely generated.
Using \con{H3}  in \autoref{sec:finprop}, we relate resolutions by $U\G$--modules freely generated in finite ranks (see \autoref{Def:Cset}) with the central stability homology. We prove a quantitive version of the following theorem  as  \autoref{thm:res of finite type}, where we also consider partial resolutions.

\begin{thmA}\label{thmA}
Let $U\G$ be a stability category. Assume \con{H3}. Let $V$ be a $U\G$--module,
then the following statements are equivalent.
\begin{enumerate}
\item There is a resolution 
\[ \dots \to P_1 \to P_0 \to V \to 0\]
of $V$ by $U\G$--modules $P_i$ that are freely generated in finite ranks.
\item The homology
\[ \widetilde H_i(V)_n =0\]
for all $i\ge -1$ and all $n$ large enough in comparison to $i$.
\end{enumerate}
\end{thmA}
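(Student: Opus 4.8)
The plan is to exploit the long exact sequence in central stability homology that comes from short exact sequences of $U\G$-modules together with the key computation, recalled in the excerpt, that
\[
\widetilde C_p(V)_n \cong \bigoplus_{f\in\Hom_{U\G}(p+1,n)} V_{n-p-1} \cong \Ind_{G_{n-p-1}}^{G_n} V_{n-p-1}.
\]
First I would observe that $\widetilde C_*(-)$ is exact as a functor on $U\G$-modules: a short exact sequence $0 \to V' \to V \to V'' \to 0$ induces, level-wise in $n$ and degree-wise in $p$, a short exact sequence of chain complexes because induction from $G_{n-p-1}$ to $G_n$ is exact (over a commutative ring $R$; here one uses that $R[G_n]$ is free as a right $R[G_{n-p-1}]$-module, which follows from the coset decomposition underlying the formula above). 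Hence there is a natural long exact sequence relating $\widetilde H_*(V')$, $\widetilde H_*(V)$, $\widetilde H_*(V'')$, compatible with the evaluation at each $n$.

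Next I would compute $\widetilde H_*$ of the basic building blocks. A $U\G$-module freely generated in finite ranks is, by definition (\autoref{Def:Cset}), a finite direct sum of modules of the form $R\Hom_{U\G}(m,-)$ for various $m$. Using the monoidal structure of $U\G$ and the fact that $0$ is an initial unit, one has $R\Hom_{U\G}(m,-) \cong R\Hom_{U\G}(0,-) \otimes (\text{shift by } m)$ in an appropriate sense, so that the central stability complex of $R\Hom_{U\G}(m,-)$ evaluated at $n$ is a shifted copy of $\widetilde C_*(R\Hom_{U\G}(0,-))$ evaluated at $n-m$ — more precisely the chain complex of $W_{n-m}$ tensored up. By hypothesis \con{H3}, $\widetilde H_i(R\Hom_{U\G}(0,-))_n = 0$ for $i\ge -1$ once $n$ is large relative to $i$; therefore the same vanishing (with a shifted range, controlled by $m$, which is where the quantitative version in \autoref{thm:res of finite type} does the bookkeeping) holds for each $R\Hom_{U\G}(m,-)$, and hence for any $P$ freely generated in finite ranks.

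With these two ingredients the equivalence follows by a standard dimension-shifting / spectral sequence argument. For $(a)\Rightarrow(b)$: given a resolution $\dots \to P_1 \to P_0 \to V \to 0$ by modules freely generated in finite ranks, apply $\widetilde C_*$ to get a double complex; one filtration has $E_1$-page built from the $\widetilde H_*(P_i)$, which vanish in the stated range, and the other filtration computes $\widetilde H_*(V)$, so a comparison of the two spectral sequences (or an iterated application of the long exact sequence to the truncated syzygies) forces $\widetilde H_i(V)_n = 0$ for $i\ge -1$ and $n$ large relative to $i$. For $(b)\Rightarrow(a)$: build the resolution one step at a time. Since $\widetilde H_{-1}(V)_n = 0$ for $n$ large means the augmentation $\widetilde C_0(V)_n = \Ind_{G_{n-1}}^{G_n}V_{n-1} \to V_n$ is onto in that range, and because $V$ is also finitely generated (this needs to be extracted from $\widetilde H_{-1}$ and $\widetilde H_0$ vanishing — essentially $\widetilde H_{-1}=0$ gives generation and $\widetilde H_0 = 0$ gives finite presentation, so $V$ has a presentation in some $U\G^{\le N}$), one can choose a surjection $P_0 \to V$ with $P_0$ freely generated in finite ranks; let $K = \ker(P_0\to V)$. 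The long exact sequence identifies $\widetilde H_i(K)_n$ with $\widetilde H_{i+1}(V)_n$ in a range (using $\widetilde H_*(P_0)=0$), so $K$ again satisfies condition (b) with the degree shifted by one, and finitely-generatedness of $K$ is where one invokes either the noetherian hypothesis or, more carefully, the quantitative connectivity to see that the syzygy is still finitely generated. Iterating produces the desired resolution.

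The main obstacle I expect is the bookkeeping in the second direction: showing that each syzygy module $K$ is itself \emph{finitely generated} (so that one can choose the next $P$ of finite rank), and controlling the ranges precisely enough that the induction closes up — i.e. that "$n$ large in comparison to $i$" for $\widetilde H_i(K)$ translates correctly through the shift. This is exactly why the theorem is stated qualitatively here but proved in a quantitative form as \autoref{thm:res of finite type}; the honest proof must track explicit linear functions $n \ge f(i)$ through the long exact sequences and the spectral sequence, and must use \con{H3} in its quantitative incarnation for the free modules $R\Hom_{U\G}(m,-)$ to feed the induction.
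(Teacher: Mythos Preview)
Your overall strategy matches the paper's: exactness of $\widetilde C_*$, vanishing of $\widetilde H_*$ on free modules via \con{H3}, then dimension-shifting along the syzygies. Two points deserve comment.

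First, you have a terminological confusion that is the source of your stated obstacle in $(b)\Rightarrow(a)$. In this paper ``freely generated in finite ranks'' (\autoref{Def:Cset}) means a direct sum $\bigoplus_{i\in I} R\Hom(m_i,-)$ with all $m_i \le d$ for some $d$, where the index set $I$ may be \emph{infinite}; it does not mean finitely generated. \autoref{prop:fg} says precisely that $\widetilde H_{-1}(V)_n = 0$ for all $n > d$ is equivalent to $V$ being generated in ranks $\le d$, with no constraint on the number of generators and no noetherian hypothesis. So the obstacle you flag dissolves: once the long exact sequence gives $\widetilde H_{-1}(K)_n = 0$ for $n > d'$, you simply take $P_1$ to be the (possibly huge) direct sum of one copy of $R\Hom(m,-)$ for each element of each $K_m$ with $m\le d'$, and iterate. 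The quantitative bookkeeping of the $d_i$'s is exactly what \autoref{thm:res of finite type} records, and it closes up without any noetherianity.

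Second, your computation of $\widetilde H_*(R\Hom(m,-))$ reaches the right conclusion but is too quick. It is not a formal consequence of the monoidal structure that the central stability complex of $R\Hom(m,-)$ at $n$ is built from copies of that of $R\Hom(0,-)$ at $n-m$. The paper establishes this as \autoref{prop:homology of free}: first \autoref{prop:injFunctors} (applicable because $\Hom(m,-)$ preserves monomorphisms) splits the semisimplicial set $K_\bullet\Hom(m,-)_n$ as $\coprod_{\tau\in\Hom(m,n)} L_\bullet^\tau$, and then each $L_\bullet^\tau$ is identified with the right link $\Lk^R\tau \cong K_\bullet\Hom(0,-)_{n-m}$ via the join/link analysis of \autoref{lem:link}. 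This is where the weakly complemented structure of $U\G$ (in particular \autoref{prop:Hominj}) is actually used, and it is the genuine content behind your phrase ``shift by $m$''.
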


Let $U\G^{\le N}$ denote the full subcategory of $U\G$ whose objects (which are natural numbers) are exactly $\{0,\dots, N\}$. Previously it has only been known that $\widetilde H_{-1}(V)_n = 0$ for all $n>d$ if and only if $V$ is generated in ranks $\le d$. For $\FI$, our $\widetilde H_{-1}$ is what is denoted by $H_0$ in \cite{CEF}. \cite{CEFN} and \cite{PS} have then used a noetherian property of the categories, they worked with, to show that all central stability homology vanishes for large enough $n$. \autoref{thmA} explains this phenomenon very clearly. It also makes the existence of free resolutions tangible, when such a noetherian condition is not true. For example for $\VIC_\ZZ$ and $\SI_\ZZ$.

We call a $U\G$--module $V$ \emph{centrally stable} if $\widetilde H_{-1}(V)_n = \widetilde H_{0}(V)_n = 0$ for all large enough $n$. This notion and terminology was introduced by Putman \cite{P} for sequences of representations of the symmetric groups. \autoref{thmA} proves that under the assumption of \con{H3}, a $U\G$--module $V$ is centrally stable if and only if $V$ is presented in $U\G^{\le N}$ for some sufficiently large $N$. A $U\G$--module $V$ is presented in a full subcategory with finitely many objects if and only if it is the left Kan extension of its restriction along the inclusion. Unfortunately this notion was also dubbed central stability in \cite{PS} and \cite{CEFN}. See \autoref{sec:notions} for more clarification on different notion used for central stability.

\subsection*{Polynomial functors}

Many interesting examples of $U\G$--modules satisfy polynomiality conditions. For example the dimensions of the permutation representations grow polynomially (even linearly). In fact \cite{CEF} proved that, assuming $R = \QQ$, dimensional growth is eventually polynomial for all finitely generated $\FI$--modules. In  \autoref{sec:poly}, we recall the definition of the \emph{polynomial degree} of a $U\G$--module $V$ from \cite{RW}. We say $V$ has polynomial degree $-\infty$ if it is zero. For $r\ge0$ we say $V$ has polynomial degree $\le r$ if the map
\[ V(0\oplus n) \longrightarrow V( 1\oplus n) \]
is always injective and its cokernel is a $U\G$--module of polynomial degree $\le r-1$. For example the constant $U\G$--module has polynomial degree $\le 0$ and the permutation representations have polynomial degree $\le 1$. 

We formulate a new condition on stability categories, that involves the cokernel 
\[\coker R\Hom_{U\G}(m,-) = \coker\Big(R\Hom_{U\G}(m,-) \to R\Hom_{U\G}(m,1\oplus -)\Big)\]
of the representable functors $R\Hom_{U\G}(m,-)$.
\begin{description}
\item[H4] The central stability homology $\widetilde H_i(\coker R\Hom_{U\G}(m,-))_n =0 $ for all $m\in \NN$, all $i\ge -1$ and all $n$ large enough in comparison to $m$ and $i$.
\end{description}

It is easy to check that 
\[ \coker R\Hom_{U\mathfrak S}(m,-)\]
is free (eg see \cite[Prop 2.2]{CEFN}) and thus \con{H4} follows from \con{H3} for $U\mathfrak S$ (which is equivalent to $\FI$). We prove together with Jeremy Miller and Jennifer Wilson \cite{MPW} that $U\GL(R)$ and $U\Sp(R)$ satisfy \con{H4} if $R$ is a PID. In \autoref{ex:Ubeta} we show that the stability category $U\beta$ of the braid groups does not satisfy \con{H4}. It is unknown whether $U\AutF$ and $U\Mod \Sigma$ satisfy \con{H4}. Assuming \con{H4},
we can make a statement about the central stability homology of $U\G$--module with finite polynomial degree.

\begin{thmA}\label{thmB}
Let $U\G$ be a stability category. Assume \con{H3} and \con{H4}. Let $V$ be a $U\G$--module with finite polynomial degree,
then $\widetilde H_i(V)_n=0$ for all $i \ge -1$ and all $n$ large enough in comparison to $i$.
\end{thmA}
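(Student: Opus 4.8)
The plan is to induct on the polynomial degree $r$ of $V$. The base case $r = -\infty$ is trivial since then $V = 0$. The induction step is where the real work happens, and it splits into two pieces: first reduce from an arbitrary $V$ of polynomial degree $\le r$ to the representable functors $R\Hom_{U\G}(m,-)$ and their cokernels, and then invoke \con{H4} together with the inductive hypothesis. Let me describe each piece.

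First I would handle the reduction to representables. Since $V$ has polynomial degree $\le r$, by definition the natural map $V(0 \oplus -) \to V(1 \oplus -)$ is injective with cokernel $Q$ of polynomial degree $\le r - 1$. I would like to replace $V$ by something built out of representables. The key observation is that by \autoref{thmA} (or rather the version with $\widetilde H_{-1}$ only), any $U\G$--module $V$ that is generated in some finite rank $d$ admits a surjection $\bigoplus_j R\Hom_{U\G}(m_j, -) \twoheadrightarrow V$ from a finite sum of representables; more care is needed because $V$ need not be generated in finite ranks a priori. However, a polynomial functor of degree $\le r$ is generated in ranks $\le r$ — this should follow by induction on $r$ from the cokernel description, since $Q$ generated in ranks $\le r-1$ plus the fact that $V(n) \to V(1\oplus n)$ has cokernel $Q(n)$ forces $V$ to be generated in ranks $\le r$. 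So I can pick a finite surjection $P = \bigoplus_j R\Hom_{U\G}(m_j,-) \twoheadrightarrow V$ with kernel $K$. A diagram chase (or the long exact sequence of central stability homology — which exists because $\widetilde C_*$ is an exact functor of $V$, being built degreewise out of the exact functor $\Ind_{G_{n-p-1}}^{G_n}$) reduces the vanishing of $\widetilde H_*(V)$ to that of $\widetilde H_*(P)$ and $\widetilde H_*(K)$. Now $K$ is again polynomial — submodules of polynomial functors are polynomial, or at least one needs that the kernel of a map between polynomials of bounded degree is polynomial of bounded degree; this is standard in the Randal-Williams--Wahl framework and should be cited or proved as a lemma — and its degree is $\le r$ as well, but it is generated in strictly lower ranks or one sets up a resolution. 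The cleanest route is probably: build a (possibly infinite) resolution of $V$ by finite sums of representables, using at each stage that the kernels remain polynomial of degree $\le r$ and are generated in bounded ranks, and then the homology long exact sequences let me reduce everything to computing $\widetilde H_*\big(R\Hom_{U\G}(m,-)\big)$.

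Second, I would compute the central stability homology of the building blocks. For the representable $R\Hom_{U\G}(m,-)$ itself there is a short exact sequence
\[
0 \longrightarrow R\Hom_{U\G}(m,-) \longrightarrow R\Hom_{U\G}(m, 1 \oplus -) \longrightarrow \coker R\Hom_{U\G}(m,-) \longrightarrow 0,
\]
and $R\Hom_{U\G}(m, 1\oplus -) \cong R\Hom_{U\G}(m \oplus 1, -)$ up to the shift, so by induction on $m$ (decreasing, with base case $m = 0$ handled by \con{H3}, since $R\Hom_{U\G}(0,-)$ is the constant functor) together with \con{H4} giving the vanishing of $\widetilde H_*(\coker R\Hom_{U\G}(m,-))$, the long exact sequence forces $\widetilde H_i(R\Hom_{U\G}(m,-))_n = 0$ for $n$ large relative to $m$ and $i$. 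Actually it is cleaner to note that $\widetilde H_i(R\Hom_{U\G}(m,-)) = 0$ for all $i \ge -1$ and all $n$ large relative to $m,i$ is essentially the statement that representables are acyclic, which should follow directly from \con{H3} applied after the Quillen-style reformulation, or can be bootstrapped from \con{H4} as above. Either way, once representables are handled, the resolution of $V$ and the long exact sequences close the argument.

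The main obstacle I anticipate is the bookkeeping of the quantitative bounds: \con{H3} and \con{H4} only give vanishing for $n$ large in comparison to $i$ (and to $m$), so in the long exact sequence arguments I must track how the ranges degrade when passing to kernels in a resolution, and ensure that the polynomial degree $r$ controls the ranks in which $V$ (and all the kernels) are generated, so that the $m_j$ appearing stay bounded in terms of $r$. This is exactly the kind of estimate that forces the paper to prove a quantitative version; the qualitative statement follows once one checks that finitely many applications of the long exact sequence suffice in each homological degree $i$, which holds because $\widetilde C_p(V)_n$ only sees $V$ in rank $n - p - 1$, so in a fixed range of $n$ only finitely much of the resolution is relevant. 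Handling the "polynomial implies generated in bounded ranks" and "kernel of a map of bounded-degree polynomials is a bounded-degree polynomial" lemmas carefully is the other place where I expect to spend effort.
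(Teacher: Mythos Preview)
Your outer induction on the polynomial degree $r$ is right, and you correctly see that the goal is to produce a free resolution of $V$ with every $P_i$ generated in uniformly bounded ranks, after which \con{H3} and the long exact sequences finish exactly as in \autoref{thm:res of finite type}. The gap is in how you control the syzygies $K_i = \ker(P_i \to K_{i-1})$.

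You assert that ``the kernels remain polynomial of degree $\le r$'', appealing to ``submodules of polynomial functors are polynomial'' or ``kernels of maps between polynomials of bounded degree are polynomial''. Neither applies here: the source $P_i$ is a sum of representables, and representables $R\Hom_{U\G}(m,-)$ are \emph{not} of finite polynomial degree in a general stability category. Indeed \autoref{ex:Ubeta} shows that $T\Hom_{U\beta}(1,-)$ is not generated in finite ranks; since $R\Hom_{U\beta}(1,-)$ is generated in rank $1$, the short exact sequence $0\to R\Hom(1,-)\to TR\Hom(1,-)\to\coker R\Hom(1,-)\to 0$ forces $\coker R\Hom_{U\beta}(1,-)$ not to be generated in finite ranks either, so $R\Hom_{U\beta}(1,-)$ cannot have finite polynomial degree. (The same example shows your claimed identification $R\Hom(m,1\oplus -)\cong R\Hom(m\oplus 1,-)$ is false outside of $\FI$-like situations.) So there is no reason for $K_0\subset P_0$ to be polynomial, and your construction of the resolution stalls after one step. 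A symptom of the problem is that your argument never uses \con{H4} in an essential way: you invoke it only to rederive acyclicity of representables, which already follows from \con{H3} alone via \autoref{prop:homology of free}. If your outline worked, Theorem~B would hold for $U\beta$, contradicting the role of \con{H4}.

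The paper's route (\autoref{prop:H(coker V)}) is to give up on keeping $K_i$ polynomial and instead control $\coker K_i$. Applying the snake lemma to $0\to K_{i+1}\to P_{i+1}\to K_i\to 0$ and using $\ker P_j=0$ gives short exact sequences $0\to\coker K_{i+1}\to\coker P_{i+1}\to\coker K_i\to 0$ for $i\ge 0$. This is where \con{H4} is actually used: it says exactly that $\widetilde H_*(\coker P_{i+1})$ vanishes in a range, so the long exact sequence produces surjections $\widetilde H_j(\coker K_i)\twoheadrightarrow\widetilde H_{j-1}(\coker K_{i+1})$. Chaining these from $\widetilde H_{N-1}(\coker V)=0$ (the inductive hypothesis on the polynomial degree of $\coker V$) down to $\widetilde H_{-1}(\coker K_{N-1})=0$, and then invoking \autoref{prop:fgcoker}, one concludes that $K_{N-1}$ is generated in bounded ranks and the resolution extends.
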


We prove a quantified version of this theorem with \autoref{quantThmB}. This theorem is the main new ingredient of \cite{MPW}, where we together with Jeremy Miller and Jennifer Wilson prove central stability for the second homology groups of Torelli subgroups of automorphism groups of free groups and mapping class groups as well as certain congruence subgroups of general linear groups.

\subsection*{Stability SES}
In  \autoref{sec:stabSES}, we consider families of short exact sequences 
\[ 1 \longrightarrow N_n \longrightarrow G_n \longrightarrow Q_n \longrightarrow 1\]
that are coming from morphisms of stability groupoids
\[ \mathcal N = (N_n)_{n\in \NN} \longrightarrow  \G= (G_n)_{n\in\NN} \longrightarrow \mathcal Q = (Q_n)_{n\in \NN}.\]
We call these families \emph{stability SES}. For every $U\G$--module $V$, there is a $U\mathcal Q$--module $H_i(\mathcal N;V)$ such that
\[ H_i(\mathcal N;V)_n\cong H_i(N_n;V_n).\]

\begin{exs}\label{exs:stabilitySES}
Here is a (very incomplete) list of interesting stability SES's.
\begin{enumerate}
\item The pure braid groups $P\beta = (P\beta_n)_{n\in\NN}$ are the kernels in the sequence
\[ 1 \longrightarrow P\beta \longrightarrow \beta \longrightarrow \mathfrak S \longrightarrow 1.\]
\item\label{item:SESGL} Fix a commutative ring $R$ and an ideal $I\subset R$. The congruence subgroups $\GL(R,I)=(\GL_n(R,I))_{n\in\NN}$ are the kernels in the sequence
\[ 1 \longrightarrow \GL(R,I) \longrightarrow \GL(R) \longrightarrow \overline{\GL(R/I)} \longrightarrow 1,\]
where $\overline{\GL(R/I)}=(\overline{\GL_n(R/I)})_{n\in\NN}$ are the images of $\GL_n(R)$ in $\GL_n(R/I)$.
\item\label{item:SESSp} Fix a commutative ring $R$ and an ideal $I\subset R$. The congruence subgroups $\Sp(R,I)=(\Sp_n(R,I))_{n\in\NN}$ are the kernels in the sequence
\[ 1 \longrightarrow \Sp(R,I) \longrightarrow \Sp(R) \longrightarrow \overline{\Sp(R/I)} \longrightarrow 1,\]
where $\overline{\Sp(R/I)}=(\overline{\Sp_n(R/I)})_{n\in\NN}$ are the images of $\Sp_n(R)$ in $\Sp_n(R/I)$.
\item\label{item:SESIA} The Torelli subgroups $\IA= (\IA_n)_{n\in\NN}$ of the automorphism groups $\Aut F_n$ are the kernels in the sequence
\[ 1 \longrightarrow \IA \longrightarrow \AutF \longrightarrow \GL(\ZZ) \longrightarrow 1.\]
\item\label{item:SESI} The Torelli subgroups $\mathcal I= (\mathcal I_{g,1})_{g\in\NN}$ of the mapping class groups $\Mod \Sigma_{g,1}$ are the kernels in the sequence
\[ 1 \longrightarrow \mathcal I \longrightarrow \Mod\Sigma \longrightarrow \Sp(\ZZ) \longrightarrow 1.\]
\item If there is an inclusion of $\mathfrak S\subset \mathcal Q$, we can twist $\mathcal N$ to receive a braided stability groupoid $\widetilde{\mathcal N} = (\widetilde N_n)_{n\in\NN}$, where $\widetilde N_n\subset G_n$ is the preimage of $\mathfrak S_n$. Then
\[ 1 \longrightarrow \mathcal N \longrightarrow \widetilde{\mathcal N} \longrightarrow \mathfrak S \longrightarrow 1\]
is a stability SES. This has been done by Putman \cite{P} for \eqref{item:SESGL} and by the author together with Wu \cite{PW} for the Houghton groups. It could also be done for \eqref{item:SESSp}, \eqref{item:SESIA}, and \eqref{item:SESI}.
\end{enumerate}
\end{exs}

Note that in all these examples $\G$ and $\mathcal Q$ are braided, but $\mathcal N$ is not. 
Assume for the following that $\G$ and $\mathcal Q$ both are braided. The central technical tools of this paper are the following spectral sequences. Let $V$ be a $U\G$--module and $n\in\NN$, then there are two spectral sequences
\[ E^1_{pq} = E_pG_n \otimes_{RN_n} \widetilde H_q(V)_n\]
and
\[ E^2_{pq} = \widetilde H_p(H_q(\mathcal N;V))_n\]
that converge to the same limit. In particular when $\widetilde H_q(V)_n=0$ for all  $n$ large enough in comparison to $q$, then both spectral sequences converge to zero in each diagonal $p+q$ for large enough $n$.

As an application of this spectral sequence, we prove a generalization of a theorem of Putman and Sam \cite[Thm 5.13]{PS}.

\begin{thmA}\label{thmC}
Let
\[ 1 \longrightarrow \mathcal N \longrightarrow \mathcal G \longrightarrow \mathcal Q \longrightarrow 1 \]
be a stability SES. Assume that  $\G$, $\mathcal Q$, and $\G\to\mathcal Q$ are braided. Let $V$ be a $U\G$--module. Assume furthermore:
\begin{enumerate}
\item All submodules of finitely generated $U{\mathcal Q}$--modules are finitely generated. (Noetherian condition)
\item $H_i(\mathcal N;V)_n$ is a finitely generated $R$--module for all $i,n\in \NN$.
\item $V$ is a finitely generated $U\G$--module.
\item $\widetilde H_i(V)_n = 0$ for all $n$ large enough in comparison to $i$.
\end{enumerate}
Then $H_i(\mathcal N;V)$ is a finitely generated $U{\mathcal Q}$--module for every $i\in \NN$.
\end{thmA}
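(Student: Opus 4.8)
The plan is to prove by induction on $i$ that $H_i(\mathcal N;V)$ is finitely generated over $U\mathcal Q$, using the two spectral sequences with common abutment to feed the Noetherian hypothesis (a) into a bootstrap.

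Two preliminary observations. First, the common abutment vanishes in high degrees for large $n$: by assumption (d) we have $\widetilde H_q(V)_n=0$ once $n$ is large compared to $q$, so every term $E^1_{pq}=E_pG_n\otimes_{RN_n}\widetilde H_q(V)_n$ of the first spectral sequence vanishes for $n$ large compared to $q$; since only the finitely many bidegrees $(p,q)$ with $p\ge 0$, $q\ge -1$, $p+q=d$ contribute to a given total degree $d$, the abutment vanishes in total degree $d$ for $n$ large compared to $d$, and hence so does the $E^\infty$-page of the second spectral sequence in each total degree. Second, any finitely generated $U\mathcal Q$-module $W$ satisfies $\widetilde H_p(W)_n=0$ for $n$ large compared to $p$: by (a) the category $U\mathcal Q$ is Noetherian, hence (since $\mathcal Q$ is braided) it satisfies \con{H3}, and $W$ admits a resolution by $U\mathcal Q$-modules freely generated in finite ranks, built degreewise with Noetherianity keeping each syzygy finitely generated, so the claim follows from \autoref{thmA}.

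For the base case $i=0$, note that the $U\mathcal Q$-module $H_0(\mathcal N;V)$ has values $(V_n)_{N_n}$, which are quotients of the $V_n$ and hence finitely generated over $R$ because $V$ is finitely generated over $U\G$ by (c); moreover $H_0(\mathcal N;V)$ is generated over $U\mathcal Q$ in the same finitely many ranks in which $V$ is generated over $U\G$, as the relevant generating morphisms descend from $\G$ to $\mathcal Q$. Hence $H_0(\mathcal N;V)$ is finitely generated over $U\mathcal Q$. For the inductive step, assume $H_j(\mathcal N;V)$ is finitely generated over $U\mathcal Q$ for all $j<i$. In the second spectral sequence the entry $E^r_{-1,i}$ supports no outgoing differential, as its target is a group $\widetilde H_p(\,\cdot\,)_n$ with $p<-1$, hence zero; the incoming differentials emanate from $E^r_{r-1,\,i-r+1}$ with $2\le r\le i+1$, and each such group is a subquotient of $\widetilde H_{r-1}(H_{i-r+1}(\mathcal N;V))_n$, which by the inductive hypothesis (applicable since $i-r+1<i$) together with the second preliminary observation vanishes for $n$ large compared to $r-1$, hence, there being finitely many relevant $r$, for $n$ large compared to $i$. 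Thus $E^2_{-1,i}=E^\infty_{-1,i}$ for such $n$, and the latter is a subquotient of the abutment in total degree $i-1$, which vanishes for $n$ large compared to $i$. Therefore $\widetilde H_{-1}(H_i(\mathcal N;V))_n=0$ for $n$ large compared to $i$, so $H_i(\mathcal N;V)$ is generated over $U\mathcal Q$ in finitely many ranks; combined with assumption (b) this makes it finitely generated over $U\mathcal Q$, closing the induction.

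The main obstacle is bookkeeping: one must verify that the many ``for $n$ large in comparison to $i$'' thresholds generated along the induction and across the pages of the spectral sequence amalgamate into a single threshold depending only on $i$. This works precisely because in each total degree only finitely many bigraded pieces are nonzero and into each $E^r_{-1,i}$ only finitely many differentials can land. The only genuinely substantive input beyond this formal machinery is the second preliminary observation, namely that Noetherianity of $U\mathcal Q$ upgrades finite generation to the vanishing of central stability homology via \autoref{thmA}; without it the inductive step could not get started.
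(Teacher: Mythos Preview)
Your argument is correct and follows essentially the same route as the paper's proof: both use the pair of spectral sequences from \autoref{prop:spectralsequence}, observe that assumption (d) forces the common abutment to vanish stably, and then run induction on $i$, using Noetherianity to upgrade the inductive hypothesis ``$H_q(\mathcal N;V)$ is finitely generated for $q<i$'' to ``$H_q(\mathcal N;V)$ is stably acyclic for $q<i$'', so that all differentials touching $E^r_{-1,i}$ vanish stably and $\widetilde H^{\mathcal Q}_{-1}(H_i(\mathcal N;V))_n=0$ for large $n$. Your second preliminary observation spells out what the paper compresses into a single sentence (``From the Noetherian condition we get that $H_q(\mathcal N;V)$ is stably acyclic''): Noetherianity of $U\mathcal Q$ gives \con{H3} by \cite[Thm~3.7]{PS}, and then \autoref{thmA} converts a resolution by modules freely generated in finite ranks into stable acyclicity. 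Your base case is argued slightly differently---the paper computes $H_0(\mathcal N;R\Hom_{U\G}(m,-))\cong R\Hom_{U\mathcal Q}(m,-)$ via the double-coset identity $N_n\backslash G_n/G_{n-m}\cong Q_n/Q_{n-m}$ and invokes right exactness of $H_0$, whereas you argue that generators descend---but these amount to the same thing.
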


This spectral sequence is also an important tool in \cite{MPW}, where we apply it to the stability SES's in \autoref{exs:stabilitySES} \eqref{item:SESGL}, \eqref{item:SESIA}, and \eqref{item:SESI}. 

Finally in  \autoref{sec:Quillen}, we revisit Quillen's argument for homological stability. We prove a theorem on homological stability with twisted coefficients. Randal-Williams and Wahl \cite[Thm A]{RW} prove a similar result but require their coefficients to have finite polynomial degree. If we additionally assume \con{H4}, our \autoref{thmB} implies that our condition is weaker, but in general our ranges are worse than theirs.

\begin{thmA}\label{thmD}
Let $U\G$ be a stability category. Assume \con{H3}. Let $V$ be a $U\G$--module with
\[ \widetilde H_i(V)_n = 0 \]
for all $n\ge k\cdot i + a$ for some $a\in\ZZ$ and $k\ge2$, then the stabilization map
\[ \phi_* \colon H_i(G_n; V_n) \longrightarrow H_i(G_{n+1};V_{n+1})\]
is an epimorphism for all $n\ge k\cdot i +a-1$ and an isomorphism for all $n\ge k\cdot i +a$.
\end{thmA}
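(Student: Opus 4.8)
The plan is to rerun Quillen's spectral sequence argument for homological stability, with the central stability complex $\widetilde C_*(V)_n$ in the role of the $V$--twisted chains on Quillen's highly connected space, using \con{H3} and the results of the preceding sections freely. Fix $n$ and abbreviate $H_i(m):=H_i(G_m;V_m)$, so the stabilization map is $\phi_*\colon H_i(m)\to H_i(m+1)$. Recall from \autoref{sec:central stability and stab cats} that $\widetilde C_q(V)_n\cong\Ind_{G_{n-q-1}}^{G_n}V_{n-q-1}$ for $q\ge0$, that $\widetilde C_{-1}(V)_n\cong V_n$, and that $\widetilde C_*(V)_n$ is a \emph{bounded} complex of $RG_n$--modules (it vanishes for $q\ge n$). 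I would choose a free resolution $P_\bullet\to R$ of $R$ over $RG_n$ and study the two spectral sequences of the double complex $P_\bullet\otimes_{RG_n}\widetilde C_\bullet(V)_n$; both converge to a common abutment $\mathbb H_*$, and they are the special case $\mathcal N=\G$, $\mathcal Q$ trivial of the spectral sequences of \autoref{sec:stabSES}.

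Running the first spectral sequence (homology of $\widetilde C_*(V)_n$ first, using flatness of the $P_p$) gives ${}'E^2_{p,q}=H_p(G_n;\widetilde H_q(V)_n)$; the hypothesis annihilates every row with $-1\le q\le (n-a)/k$, and since any contribution to total degree $m$ has $q=m-p\le m$, this forces $\mathbb H_m=0$ for all $m\le (n-a)/k$. Running the second one (group homology first, then Shapiro's lemma) gives ${}''E^1_{p,q}=H_p(G_n;\widetilde C_q(V)_n)$, which is $H_p(n-q-1)$ for $q\ge0$ and $H_p(n)$ for $q=-1$, with $d^1$ the alternating sum of the face maps. By the standard computation --- every face map induces $\phi_*$ on $G_n$--homology, which is part of the concrete description of $\widetilde C_*(V)_n$ in \autoref{sec:central stability and stab cats} --- the map $d^1\colon {}''E^1_{p,q}\to{}''E^1_{p,q-1}$ equals $\phi_*$ for $q$ even and $0$ for $q$ odd. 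Hence each ${}''E^2_{p,q}$ is the kernel ($q$ even) or cokernel ($q$ odd, including $q=-1$) of a copy of $\phi_*$ in homological degree $p$ between two consecutive groups; in particular ${}''E^2_{p,-1}=\coker(\phi_*\colon H_p(n-1)\to H_p(n))$ and ${}''E^2_{p,0}=\ker(\phi_*\colon H_p(n-1)\to H_p(n))$.

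The theorem now follows by induction on $i$, the claim at stage $i$ being that ${}''E^2_{i,-1}(n)=0$ for $n\ge ki+a-k+2$ and ${}''E^2_{i,0}(n)=0$ for $n\ge ki+a+1$; since $k\ge2$, the former yields the epimorphism assertion (take $m=n-1\ge ki+a-1$) and the two together yield the isomorphism assertion (take $m\ge ki+a$). For $i=0$ no higher differential reaches ${}''E^2_{0,-1}$ or ${}''E^2_{0,0}$ (the only possible sources lie in negative group-homology degree), so these equal ${}''E^\infty_{0,-1}$ and ${}''E^\infty_{0,0}$, which vanish as soon as $\mathbb H_{-1}=0$, resp.\ $\mathbb H_0=0$. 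For the inductive step, ${}''E^2_{i,-1}(n)$ receives $d^r$ only out of ${}''E^r_{i-r+1,\,r-1}(n)$ for $2\le r\le i+1$, and each such source is a subquotient of a kernel or cokernel of $\phi_*$ in homological degree $i-r+1<i$, which by the inductive hypothesis vanishes once $n\ge ki+a+k-(k-1)r$; the worst case $r=2$ gives $n\ge ki+a-k+2$, so together with $\mathbb H_{i-1}=0$ (valid for $n\ge ki+a-k$) one gets ${}''E^2_{i,-1}(n)=0$ in the asserted range. The computation for ${}''E^2_{i,0}(n)$ is identical, with sources ${}''E^r_{i-r+1,\,r}(n)$ and the bound shifted by one to $n\ge ki+a-k+3$, combined with $\mathbb H_i=0$ (valid for $n\ge ki+a$), giving ${}''E^2_{i,0}(n)=0$ for $n\ge ki+a+1$.

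The only real obstacle I anticipate is precisely this bookkeeping in the inductive step: the induction closes because $ki+a-k+2\le ki+a$ and $ki+a-k+3\le ki+a+1$, that is, exactly because $k\ge2$ --- for $k=1$ the $r=2$ differential would cost one additional step and the scheme would collapse, which is why that hypothesis is essential (and is why the ranges obtained here are worse than those in \cite{RW}). Everything else should be routine: Shapiro's lemma, the identification of the face maps with $\phi_*$ (where the structure of the stability category enters, via \autoref{sec:central stability and stab cats}), and the convergence of both spectral sequences, which holds because $\widetilde C_\bullet(V)_n$ is bounded.
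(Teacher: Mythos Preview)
Your proposal is correct and follows essentially the same route as the paper's proof. The paper packages the double complex as the special case $\mathcal N=\G$, $\mathcal Q=\NN$ of the stability SES machinery of \autoref{prop:spectralsequence}, whereas you build it by hand as $P_\bullet\otimes_{RG_n}\widetilde C_\bullet(V)_n$; after Shapiro the two are the same spectral sequence with indices swapped and an off-by-one shift in $n$.

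One small bookkeeping remark: your uniform formula $n\ge ki+a+k-(k-1)r$ for the vanishing of the source ${}''E^2_{i-r+1,r-1}$ is only the sharp bound when $r$ is odd (where the source is a kernel and you invoke the ${}''E^2_{j,0}$ hypothesis); for even $r$ the source is a cokernel and the inductive hypothesis on ${}''E^2_{j,-1}$ actually gives the better bound $n\ge ki+a+2-(k-1)r$. Since the latter is $\le$ the former for $k\ge 2$, your stated formula is still a valid upper bound and the worst case $r=2$ and the final ranges are unaffected. Everything else---convergence, the identification of every face map with $\phi_*$ on $G_n$--homology (which follows from the explicit description in \autoref{sec:central stability and stab cats} together with the fact that the face maps differ by elements of $G_n$), and the closing of the induction precisely when $k\ge 2$---is as you say.
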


\begin{rem}
Notice that \autoref{thmD} and \autoref{rem:shiftfree} give criterions to check whether the double cosets
\[ G_{n-a}\backslash G_n / G_{n-b} \] 
stabilize with growing $n$ and fixed $a,b$. 
\end{rem}

\subsection*{Acknowledgement}
The author was supported by the Berlin Mathematical School and the Dahlem Research School. The author also wants to thank Aur$\acute{\text{e}}$lien Djament, Daniela Egas Santander, Reiner Hermann, Henning Krause, Daniel L\"utgehet\-mann, Jeremy Miller, Holger Reich, Steven Sam, Elmar Vogt, Nathalie Wahl, Jenny Wilson for helpful conversations. Special thanks to Reiner Hermann and his invitation to NTNU where the idea for this project was born.

%%%%%%%%%% Section  %%%%%%%%%%%

\section{Central stability homology}\label{sec:centralstab}

Let $(\C,\oplus,0)$ be a small monoidal category with unit object $0$. In particular,
\[ \oplus \colon \C \times \C \to \C\]
is a bifunctor. Let $\D$ be cocomplete category and $\Fun(\C,\D)$ the category of functors from $\C$ to $\D$. Then the monoidal structure gives rise to a functor
\[ \Fun(\C,\D) \to \Fun(\C\times \C, \D)\]
by precomposition of $\oplus$. This is the same as giving a functor
\[ S\colon \C \to \End(\Fun(\C,\D))\]
from $\C$ to the category of endofunctors of $\Fun(\C,\D)$. Explicitly it sends an object $X\in \C$ to the \emph{suspension} endofunctor
\[ S_X \colon F \mapsto F(- \oplus X).\]

Because $S_X$ is just the precomposition of $-\oplus X$, the functor $S_X$ has a left adjoint, namely the left Kan extension along $-\oplus X$ (see \cite[Cor X.3.2]{ML}). We will call its left adjoint the \emph{desuspension} endofunctor and denote it by $\Sigma_X$. 
There is an antiequivalence of categories between left and right adjoint endofunctors. (Cf \cite[Thm 3.2]{KanAF}) Consequently, there is a functor
%\begin{equation}
\[ \Sigma\colon \C^{\op} \to \End(\Fun(\C,\D))\]
%\label{eq:desuspension}\end{equation}
that sends $X$ to $\Sigma_X$.

Clearly $S_{X\oplus Y} = S_{Y}\circ S_X$. Because $\Sigma_X$ is the right adjoint of $S_X$
\[ \Sigma_{X\oplus Y} \cong \Sigma_X \circ \Sigma_Y.\]
Thus $\Sigma$ is in fact a monoidal functor, where $\End(\Fun(\C,\D))$ is equipped with the monoidal structure $\circ$.

\begin{ex}
Consider the monoidal category $\C = \FI$ (which is not small but has a small skeleton) and $\D = \xmod{\ZZ}$ the cocomplete category of abelian groups. Then $\Sigma_X$ is given by
\[ (\Sigma_X F)(A) \cong \bigoplus_{f\colon X \inject A} F(A \setminus \im f),\]
for finite sets $X$ and $A$ and an $\FI$--module $F\colon \FI \to \xmod{\ZZ}$. This $\ZZ$--module is isomorphic to 
\[ \Ind_{\mathfrak S_{n-m}}^{\mathfrak S_n} F(\{1, \dots, n-m\})\]
when $X = \{1,\dots,m\}$ and $A = \{ 1, \dots, n\}$ with $m \le n$.
\end{ex}

\begin{Def}
Let the \emph{augmented semisimplicial category $\Delta'_+$} be the category with objects the sets $\{0,\dots, n\}$ for $n\ge -1$ and morphisms strictly increasing set maps. \[\{ 0, \dots, m\} \oplus \{0, \dots, n\} = \{0, \dots, m, m+1, \dots, m+1+n\}\] defines a monoidal structure on $\Delta'_+$ with the unit $\emptyset$, which is additionally initial.
\end{Def}

The following theorem describes how $\Delta'_+$ behaves universally among all monoidal categories with initial unit object. Let for the remainder of this section $(\C,\oplus, 0)$ be a small monoidal category with initial unit object $0$ and denote the initial maps by $\iota_X\colon 0\to X$. 

\begin{thm}
Let $X$ be an object in $\C$. Then there exists a unique monoidal functor $\Delta'_+ \to \C$ that sends $\{0\}$ to $X$.
\end{thm}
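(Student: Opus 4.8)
The statement asserts that $\Delta'_+$ is the free monoidal category (among monoidal categories with initial unit) on a single object: a choice of object $X \in \C$ determines, uniquely up to unique monoidal isomorphism, a monoidal functor $\Delta'_+ \to \C$ sending $\{0\}$ to $X$. I would prove this by directly constructing the functor, using the monoidal structure of $\Delta'_+$ to force every value, and then checking that the forced assignments are in fact functorial and monoidal.

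\emph{Construction of the functor on objects.} Write $[n] = \{0,1,\dots,n\}$ for $n \ge 0$ and $[-1] = \emptyset$. Since $[n] = \{0\}^{\oplus (n+1)}$ in $\Delta'_+$ and $[-1] = \emptyset$ is the monoidal unit, a monoidal functor $F$ sending $\{0\}$ to $X$ is forced to send $[n]$ to $X^{\oplus(n+1)} := X \oplus (X \oplus (\cdots \oplus X))$ (some fixed parenthesization, say right-normalized) and $[-1]$ to $0$. Here I should be slightly careful about coherence: the monoidal structure on $\Delta'_+$ is strictly associative and strictly unital on the nose, so $F$ being monoidal means it comes with coherence isomorphisms $F(A) \oplus F(B) \cong F(A \oplus B)$; these are determined by the associativity and unit constraints of $\C$, and I will use them to identify $F([n])$ with the chosen parenthesization of $X^{\oplus(n+1)}$.

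\emph{Construction on morphisms.} A morphism $[m] \to [n]$ in $\Delta'_+$ is a strictly increasing map, i.e.\ the inclusion of an $(m+1)$-element subset of $[n]$. The key observation is that every such map factors as a composite of coface maps, and — more usefully for uniqueness — that every strictly increasing map $[m] \to [n]$ can be written, using the monoidal structure, built out of (i) identities, (ii) the maps $\iota_{\{0\}} \colon \emptyset = [-1] \to [0] = \{0\}$ (the unique map from the initial unit), and (iii) $\oplus$. Concretely, if $f \colon [m] \hookrightarrow [n]$ has image with "gaps" of sizes $k_0, k_1, \dots, k_{m+1}$ (so $k_0$ elements of $[n]$ precede $f(0)$, $k_j$ elements lie strictly between $f(j-1)$ and $f(j)$, and $k_{m+1}$ follow $f(m)$, with $\sum k_i = n - m$), then $f = \iota_{[k_0-1]}_{\,\oplus\text{-ed into place}} \oplus \operatorname{id}_{\{0\}} \oplus \iota_{[k_1 - 1]} \oplus \cdots$, i.e.\ $f$ is the $\oplus$ of the identity of $\{0\}$ ($m+1$ times) interleaved with the unique maps $\emptyset \to [k_i - 1] = \{0\}^{\oplus k_i}$ from the initial object. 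This presentation forces $F(f)$: it must be the corresponding $\oplus$ of $\operatorname{id}_X$'s and the initial maps $\iota_{X^{\oplus k_i}} \colon 0 \to X^{\oplus k_i}$ in $\C$ (which exist and are unique because $0$ is initial), all conjugated by the coherence isomorphisms identifying $F([j])$ with the chosen parenthesization of $X^{\oplus(j+1)}$.

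\emph{Well-definedness, functoriality, monoidality, uniqueness.} The heart of the argument is that the assignment above is well-defined (independent of how we decompose $f$ into $\oplus$'s of identities and initial maps) and respects composition. Well-definedness follows because any two such decompositions of the same $f$ differ by applications of the bifunctoriality of $\oplus$ and the coherence axioms of $\C$, together with the fact that maps out of the initial object $0$ are unique (so any two candidate images of $\iota_{[k-1]}$ agree). Functoriality (compatibility with composition of strictly increasing maps) is then a matter of checking that composing two interleaved-$\oplus$ presentations gives the interleaved-$\oplus$ presentation of the composite, which reduces to the interchange law for $\oplus$ and, again, uniqueness of maps from $0$; the gap-sizes add in the evident way under composition. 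Monoidality of $F$ is essentially built in: $F([m] \oplus [n]) = F([m+n+1])$ is the parenthesization of $X^{\oplus(m+n+2)}$, which is canonically isomorphic via $\C$'s associators to $F([m]) \oplus F([n])$, and one checks the coherence pentagon/triangle for $F$ reduces to those of $\C$. Uniqueness is immediate from the construction: we argued at each stage that a monoidal $F$ with $F(\{0\}) = X$ is \emph{forced} to take these values on objects and morphisms.

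\emph{Expected main obstacle.} The genuinely delicate point is the well-definedness of $F$ on morphisms — verifying that the forced value $F(f)$ does not depend on the chosen decomposition of the strictly increasing map $f$ into monoidal combinations of identities and initial maps. This is where the hypothesis that the unit $0$ is \emph{initial} (not merely a unit) does all the work: it guarantees that the "new" parts of any morphism of $\Delta'_+$ — the insertions of extra elements — have a unique image, so the only ambiguity left is the bookkeeping of associativity/unit constraints, which is handled by Mac Lane coherence in $\C$. A clean way to organize this, avoiding an explicit combinatorial decomposition lemma, is to note that $\Delta'_+$ is generated as a monoidal category by the single object $\{0\}$ together with the single morphism $\iota_{\{0\}} \colon \emptyset \to \{0\}$, subject only to the monoidal-category axioms (this is the precise sense in which $\Delta'_+$ is "the free monoidal category with initial unit on one object and the map from the unit"); granting that presentation, the theorem is the universal property of a free construction and the proof is formal. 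If one does not want to invoke such a presentation, then the combinatorial argument sketched above — tracking gap-sizes and checking the interchange law — is routine but somewhat tedious, and that bookkeeping is the part I expect to occupy most of the write-up.
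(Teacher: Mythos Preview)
Your proposal is correct and conceptually aligned with the paper's argument: both force the values on objects to be $X^{\oplus(n+1)}$ and on morphisms to be built from identities and initial maps $\iota$, then verify the result is a functor. The organizational difference is that the paper sidesteps precisely the bookkeeping you flag as the main obstacle. Rather than defining $F$ on every strictly increasing map via a gap decomposition and then checking functoriality directly, the paper invokes the standard presentation of $\Delta'_+$ as a plain category by coface generators $d_i\colon [p-1]\to[p]$ subject to the cosimplicial identities $d_jd_i=d_id_{j-1}$ for $i<j$ (as in \cite[Ex 8.1.6]{Wei}). It then sets $F(d_i)=\id_{X^{\oplus i}}\oplus\iota_X\oplus\id_{X^{\oplus p-i}}$ and simply checks this one family of relations, which is immediate from bifunctoriality of $\oplus$ and uniqueness of maps out of $0$. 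This reduces your ``tedious write-up'' to a single line. Your alternative suggestion at the end---presenting $\Delta'_+$ as the free monoidal-category-with-initial-unit on one object---would also work and is arguably the most conceptual phrasing, but the paper's cosimplicial route has the advantage of citing a ready-made reference rather than proving a new presentation lemma.
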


\begin{proof}
Similar to \cite[Ex 8.1.6]{Wei}, a functor $\Delta'_+ \to \C$ is given by a sequence of objects $K_{-1},K_0, \dots$ and maps $d_i\colon K_{p-1} \to K_p (i = 0,\dots,p)$ such that if $i<j$ then $d_jd_i = d_id_{j-1}$. To be monoidal $K_p$ must be $X^{\oplus p+1}$ for all $n \ge -1$. Because $0\in \C$ is initial, there is exactly one candidate for $d_0\colon K_{-1} =0 \to K_0 = X$, which is the initial map $\iota_X \colon 0 \to X$. Further the other maps must have the following form.
\[ d_i = \id_{X^{\oplus i}}\oplus \iota_X \oplus \id_{X^{\oplus p-i}} \colon K_{p-1} = X^{\oplus i} \oplus 0 \oplus X^{\oplus p-i} \to K_p = X^{\oplus i} \oplus X \oplus X^{\oplus p-i}\]
One easily checks that this satisfies the condition given.
\end{proof}

\begin{Def}\label{Def:semisimplicialobject}
%Let $(\C,\oplus, 0)$ be a small monoidal category with initial unit object $0$ and 
Let $X$ an object of $\C$. Then we get an augmented semisimplicial object
\[{\Delta'_+}^{\op} \to  \C^{\op} \to \End(\Fun(\C,\D)).\]
This is the same as a functor
\[ K^X\colon \Fun(\C,\D) \longrightarrow \Fun({\Delta'_+}^{\op},\Fun(\C,\D)).\]
Thus for every functor $F\colon \C\to \D$ we get an augmented  semisimplicial object $K_\bullet^XF$ in $\Fun(\C,\D)$ with
\[ K^X_pF = \Sigma_{X^{\oplus p+1}}F \cong \Sigma^{p+1}_X F.\]
\end{Def}

\begin{Def}\label{Def:centralstabilityhomology}
Assume $\D$ is an abelian category, we denote the associated chain complex of $K_\bullet^XF$ by $\widetilde C^X_*F$ and the homology of this chain complex by $\widetilde H^X_* F$. We call this the \emph{central stability homology} of $F$.
\end{Def}

%\begin{Def}
%%Let $(\C,\oplus, 0)$ be a small monoidal category with initial unit object $0$ and 
%Let $A,X$ objects of $\C$. Denote by $\C_{A,X}$ the full subcategory of $\C$ whose objects are $A \oplus X^{\oplus n}$ for $n\in\NN$.
%\end{Def}
%
%The monoidal structure of $\C$ restricts to a bifunctor 
%\[\oplus\colon \C_{A,X} \times \C_{0,X} \longrightarrow\C_{A,X}.\]
%This induces functors
%\[ S_X \colon \C_{0,X} \longrightarrow \End(\Fun(\C_{A,X},\D))\]
%and
%\[ \Sigma_X \colon \C_{0,X}^{\op} \longrightarrow \End(\Fun(\C_{A,X},\D)).\]
%We furthermore get the functor
%\[ K^X\colon \Fun(\C_{A,X},\D) \longrightarrow \Fun({\Delta_+'}^{\op}, \Fun(\C_{A,X},\D))\]
%and if $\D$ is abelian the associated chain complex $\widetilde C^X_*$ and its homology $\widetilde H^X_*$.

A well known consequence of the Yoneda Lemma is that
\[ \Sigma_X \Hom_\C(A,-) \cong \Hom_\C(A\oplus X,-).\]

\begin{ex}
Let us consider $\C=\FI$ and $\mathcal D = \Set$, and let us fix $X=\{*\}$ a singleton. Given $F=\Hom_{\FI}(\emptyset,-)$ the representable functor, $K_\bullet^X F$ is given by
\[ K_p^X F \cong \Hom_{\FI}(\{0,\dots,p\},-)\]
with face maps given by precomposition of the morphisms
\[ \{0,\dots,\hat i, \dots, p \} \inject \{ 0, \dots, p\}\]
where the hat indicates that $i$ is omitted.

Evaluating at $\{1,\dots, n\}$, we get the (augmented) semisimplicial set
\[ W_n \cong (K_\bullet^X F)(\{1,\dots,n\})\]
from \cite[Def 2.1]{RW} associated to the homogeneous category $\FI$.
\end{ex}

More generally we can describe $\Sigma_X F$ for a functor $F\colon \C\to \D$ as a colimit. Let $(-\oplus X\downarrow B)$ be the comma category whose objects are the pairs $(A, \psi \colon A\oplus X \to B)$, then there is a forgetful functor
\[ (A, \psi\colon A\oplus X \to B) \mapsto A.\]
Precomposing $F$ with this functor we get
\[ (\Sigma_X F)(B) \cong \colim_{(-\oplus X \downarrow B)} F.\]
%Note that for an element $(A,\psi)$ in $(-\oplus X \downarrow B)$ the composition
%\[ \psi \circ(\iota_A \oplus X) \colon X \to B\]
%is an invariant under morphisms in $(-\oplus X \downarrow B)$. For $\sigma\colon X \to B$ let
%\[ (-\oplus X \downarrow B)_\sigma\]
%be the full subcategory of $(-\oplus X \downarrow B)$ on the elements $(A,\psi)$ with
%\[ \psi \circ(\iota_A \oplus X) = \sigma.\]
%Then
%\[ (\Sigma_X F)(B) \cong \coprod_{\sigma\in \Hom_\C(X,B)} \colim_{(-\oplus X \downarrow B)_\sigma} F.\]
%In particular if $(-\oplus X \downarrow B)_g$ has a terminal element $(A,f)$, then 
%\[ \colim_{(-\oplus X \downarrow B)_g} F \cong F(A).\]

For a morphism $X \to Y$ in $\C$, the natural transformation $\Sigma_YF \to \Sigma_X F$ is given by the functor
\[ (-\oplus Y\downarrow B) \longrightarrow (-\oplus X \downarrow B)\]
mapping to the composition
\[ (A\oplus Y \to B) \longmapsto (A\oplus X \to A\oplus Y \to B).\]
%That means the natural transformation splits into 
%\[ \colim_{(-\oplus X \downarrow B)_\sigma} F
%\longrightarrow \colim_{(-\oplus X \downarrow B)_{\sigma\circ\eta}} F\]
%via the restriction
%\[ (-\oplus Y\downarrow B)_{\sigma} \longrightarrow (-\oplus X \downarrow B)_{\sigma\circ \eta}.\]
%Let $g\in \Hom_{\C}(Y,B)$ and assume $(A,h)$ is a terminal element in $(-\oplus Y\downarrow B)_g$. If furthermore $(A',h')$ is a terminal element in $(-\oplus X\downarrow Y)_f$, then \[(A\oplus A', f \circ(A\oplus f'))\] is a terminal element in $(-\oplus X\downarrow B)_{gf}$ and there is a natural map
%\[ F(A) \longrightarrow F(A\oplus A')\]
%which induces the natural transformation $\Sigma_YF \to \Sigma_X F$.

\begin{ex}\label{ex:FIhomology}
For $\C=\FI$, $\mathcal D = \xmod\ZZ$ and $X=\{*\}$, the central stability homology  is given by the semisimplicial $\FI$--module $K_\bullet^X F$ with
\[ K_p^X F \cong \bigoplus_{f\in \Hom_{\FI}( \{0,\dots, p\},-)} F(C_f)\]
where $C_f$ is the complement of the image of $f$ and the face maps given by the precomposition of the morphisms above and the inclusion of the complements.
%
%Church and Ellenberg \cite[Sec 5.1]{CE} that this is actually very close to the complex computing their $\FI$--homology:
%\[ \bigoplus_{f\in \Hom_{\FI}( \{0,\dots, p\},-)/\mathfrak S_{p+1}} F(C_f)\]
%This was already referred to by $\Sigma'' F$ in \cite{PS} and $IA(F)$ in \cite{P}.
\end{ex}

%%%%%%%%%% Section  %%%%%%%%%%%

\section{Stability categories}\label{sec:stabcat}

In this section we introduce stability categories that are a specific kind of homogeneous categories (see \cite[Def 1.3]{RW} or \autoref{Def:homogeneouscat}) and weakly complemented categories (see \cite{PS} or \autoref{Def:weakly complemented category}).

\begin{Def}\label{Def:stability groupoid}
Let $(\G,\oplus,0)$ be a monoidal groupoid whose monoid of objects is the natural numbers $\NN$. The automorphism group of the object $n \in \NN$ is denoted $G_n = \Aut^\G(n)$. Then $\G$ is called a \emph{stability groupoid} if it satisfies the following properties.
\begin{enumerate}
\item The monoidal structure
\[ \oplus \colon G_m\times G_n \inject G_{m+n}\]
is injective for all $m,n\in \NN$.
\item The group $G_0$ is trivial.
\item $(G_{l+m}\times 1)\cap (1\times G_{m+n}) = 1\times G_{m} \times 1\subset G_{l+m+n}$ for all $l,m,n\in \NN$.
\end{enumerate}
\end{Def}

\begin{Def}
A \emph{homomorphism} $ \mathcal G \to \mathcal H$ of stability groupoids is a monoidal functor sending $1$ to $1$.
\end{Def}

\begin{rem}
A stability groupoid $\G$ is given by a sequence  of groups $(G_n)_{n \in \NN}$ and group homomorphisms
\[ \oplus\colon G_m \times G_n \longrightarrow G_{m+n}\]
satisfying additional properties that would require spelling out what a monoidal structure is. In notation we will usually suppress the monoidal structure and just write $\G = (G_n)_{n\in \NN}$ for a stability groupoid.

Similarly a homomorphism of stability groupoids  
\[f\colon \G=(G_n)_{n\in\NN} \longrightarrow \mathcal H= (H_n)_{n\in\NN}\]
 is given by a sequence of group homomorphisms
\[ f_n\colon G_n \longrightarrow H_n\]
for which the diagrams
\[ \xymatrix{
G_m\times G_n \ar[r]\ar[d]_{f_m\times f_n} & G_{m+n}  \ar[d]^{f_{m+n}}\\
H_m\times H_n \ar[r] & H_{m+n}
}\]
commute.
\end{rem}

Recall that a \emph{braiding} $b$ of a monoidal category $(\C,\oplus, 0)$ is a natural isomorphism between the functors $\oplus \colon \C\times \C \to \C$ and $\oplus$ precomposed with a swap that has some additional conditions. In particular, there are isomorphisms 
\[b_{A,B}\colon A\oplus B \to B\oplus A\]
 for all objects $A,B\in \C$, such that for all morphisms $f\colon A\to C$ and $g\colon B \to D$ the diagram
\[ \xymatrix{
A\oplus B \ar[r]^{b_{A,B}} \ar[d]_{f\oplus g} & B\oplus A \ar[d]^{g \oplus f}\\
C\oplus D \ar[r]_{b_{C,D}} & D\oplus C
}\]
commutes. For such natural isomorphism to be a braiding one also needs two hexagon identities. (Cf \cite[Sec XI.1]{ML} for details.) If
\[ b^{-1}_{A,B} = b_{B,A}\]
the braiding is called a \emph{symmetry}.

For a stability groupoid $\G = (G_n)_{n\in\NN}$, a braiding $b$ is given by isomorphisms
\[ b_{m,n} \in G_{m+n}\]
such that 
\[ (g \oplus f)\circ b_{m,n} = b_{m,n} \circ (f\oplus g)\]
for all $f\in G_m$ and $g\in G_n$ and
\[b_{l,m+n} = (\id_m\oplus b_{l,n})\circ(b_{l,m} \oplus \id_n)\quad\text{and}\quad b_{l+ m,n } = (b_{l,n}\oplus \id_m) \circ (\id_l\oplus b_{m,n}) \]
which amounts to the hexagon identities.

For any monoidal groupoid $\G$, Randal-Williams and Wahl \cite[Sec 1.1]{RW} give a construction originally due to Quillen that yields the category $U\G$. 

\begin{Def}
Let $(\G,\oplus,0)$ be a monoidal groupoid. The category $U\G$ has the same objects as $\G$ and its morphisms from $A$ to $B$ are equivalence classes of pairs $(f,C)$ where $C$ is an object in $\G$ and $f$ is an (iso)morphism in $\G$ from $C\oplus A \to B$. Two of these  pairs $(f,C)$ and $(f',C')$ are equivalent if there is an isomorphism $g\colon C \to C'$ (in $\G$) such that the diagram
\[ \xymatrix{
C\oplus A \ar[r]^f \ar[d]_{g\oplus \id_A} & B\\
C'\oplus A \ar[ru]_{f'}
}\]
commutes.
\end{Def}

\begin{Def}
If $\G$ is a braided stability groupoid, we call $(U\G,\oplus,0)$ the \emph{stability category} of $\G$.
\end{Def}

The following definition is due to Randal-Williams and Wahl \cite[Def 1.3]{RW}.

\begin{Def}\label{Def:homogeneouscat}
Let  $(\C,\oplus,0)$ be a monoidal category with initial $0$. $\C$ is called a \emph{homogeneous category} if it satisfies the following conditions for all objects $A,B\in \C$:
\begin{description}
\item[H1] $\Aut(B)$ acts transitively on $\Hom(A,B)$ by postcomposition.
\item[H2]  The map $\Aut(A) \to \Aut(A\oplus B)$ taking $f$ to $f\oplus \id_B$ is injective and its image is
\[ \Fix(B,A\oplus B) := \{ f\in \Aut(A\oplus B) \mid f\circ (\iota_A \oplus \id_B) = \iota_A \oplus \id_B\},\]
where $\iota_A\colon 0 \to A$ denotes the initial morphism.
\end{description}

$\C$ is called \emph{prebraided} if its underlying groupoid is braided with a braiding $b$, that satisfies the equation
\[ b_{A,B} \circ (\id_A \oplus \iota_B) = \iota_B \oplus \id_A\]
for all objects $A,B\in \C$.
\end{Def}

\begin{rem}\label{rem:symH2}
The condition \con{H2} is not only symmetric for symmetric homogeneous category $(\C,\oplus,0)$; prebraided suffices for this conclusion. Here is the reason. Clearly $\Aut(A)\to \Aut(B\oplus A)$ is also injective and the image lies in
\[ \Fix(B, B\oplus A) = \{ f\in \Aut(B\oplus A) \mid f\circ (\id_B\oplus \iota_A) = \id_B \oplus \iota_A\}.\]
But assume $f\in \Fix(B,B\oplus A)$ then $b_{B,A}\circ f \circ b^{-1}_{B,A} = g \oplus B$ for some $g\in \Aut(A)$ by \con{H2} and thus
\[ f = b_{B,A}^{-1} \circ (g\oplus \id_B) \circ b_{B,A} = \id_B \oplus g.\]
\end{rem}

The stability category of a braided stability groupoid is always a homogeneous category.

\begin{prop}\label{prop:stabcat is homogeneous}
Let $\G$ be a braided (symmetric) stability groupoid, then $U\G$ is a prebraided (symmetric) homogeneous category. And the underlying groupoid of $U\G$ is $\G$. 
\end{prop}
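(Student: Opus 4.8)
The plan is to verify the three required properties directly from the construction of $U\G$, using the three axioms of a stability groupoid. First I would record the morphism sets: by definition $\Hom_{U\G}(m,n)$ is the set of equivalence classes of pairs $(f,C)$ with $C\in\NN$ and $f\colon C\oplus m\to n$ an isomorphism in $\G$; since all objects are natural numbers and isomorphisms preserve the monoidal grading, $C$ is forced to be $n-m$ (so the hom-set is empty when $m>n$), and two pairs $(f,n-m)$ and $(f',n-m)$ are equivalent iff $f' = f\circ(g\oplus\id_m)$ for some $g\in G_{n-m}$. Hence $\Hom_{U\G}(m,n)\cong G_{n-m}\backslash G_n$ as a set, and the automorphism group $\Aut_{U\G}(n)$ is $G_0\backslash G_n = G_n$ because $G_0$ is trivial (axiom (2)); this already gives the last sentence, that the underlying groupoid of $U\G$ is $\G$. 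I should also note that postcomposition by $\Aut_{U\G}(n)=G_n$ on $\Hom_{U\G}(m,n)\cong G_{n-m}\backslash G_n$ is the evident left $G_n$-action, which is transitive — that is \textbf{H1}.

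Next I would check \textbf{H2}. The map $\Aut_{U\G}(m)=G_m\to\Aut_{U\G}(m\oplus k)=G_{m+k}$ sends $h$ to $h\oplus\id_k$, i.e.\ to the image of $h$ under $\oplus\colon G_m\times G_k\to G_{m+k}$ applied to $(h,1)$; this is injective by axiom (1). To identify the image with $\Fix(k, m\oplus k)$, I would spell out the initial morphism $\iota_m\colon 0\to m$ in $U\G$: it is the class of the pair $(\id_m, m)$ under the iso $0\oplus m=m$, and $\iota_m\oplus\id_k\colon k\to m\oplus k$ is the class of $(\id_{m+k}, m)$. A morphism in $\Aut_{U\G}(m+k)=G_{m+k}$, represented by $\phi\in G_{m+k}$, satisfies $\phi\circ(\iota_m\oplus\id_k)=\iota_m\oplus\id_k$ precisely when $\phi\circ(\id_m\oplus\ast)$ composed appropriately lands in the same equivalence class — unwinding the equivalence relation, this says $\phi\in 1\times G_{m+k}$... more precisely, that $\phi$ fixes the chosen complement, which pins it down to lie in $G_m\times 1 \subset G_{m+k}$. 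Here I expect the slightly fiddly part: carefully translating ``$\phi$ fixes $\iota_m\oplus\id_k$ as a morphism of $U\G$'' through the definition of the equivalence relation on pairs, and seeing that this is exactly the condition $\phi=h\oplus\id_k$ for some $h\in G_m$. This is the main obstacle — not deep, but it requires being scrupulous about which complement data is being fixed.

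Finally I would treat the prebraided (resp.\ symmetric) claim. Given a braiding $b$ on $\G$, i.e.\ elements $b_{m,n}\in G_{m+n}$ satisfying the hexagon and naturality relations recorded earlier, I would define the braiding on $U\G$ on objects by the class of $b_{m,n}$ (viewed as an isomorphism $m\oplus n\to n\oplus m$ in $U\G\supset\G$), check the naturality square for morphisms of $U\G$ — which reduces, using \textbf{H1} to write a general morphism as an automorphism composed with a ``standard inclusion'', to the naturality of $b$ in $\G$ together with the compatibility of $\oplus$ with the equivalence relation — and check the hexagon identities, which are inherited verbatim from $\G$. For the prebraided condition I must verify $b_{A,B}\circ(\id_A\oplus\iota_B)=\iota_B\oplus\id_A$ in $U\G$; writing $A=a$, $B=b$, the left side is the class of $b_{a,b}\circ(\id_a\oplus \text{(inclusion of }0))$ and the right side the class of the standard inclusion $a\to b\oplus a$, and this identity follows from the hexagon relation $b_{l+m,n}=(b_{l,n}\oplus\id_m)\circ(\id_l\oplus b_{m,n})$ specialized with one factor equal to $0$ (using $b_{0,n}=\id_n$, which follows from the hexagon relations and $G_0=1$). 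If $b$ is a symmetry, $b_{B,A}=b_{A,B}^{-1}$ passes to $U\G$ immediately, giving the symmetric case. Throughout, the key mechanism is that every morphism of $U\G$ factors as an automorphism followed by a canonical ``add a free summand'' map, so all diagrammatic checks reduce to identities already available in $\G$.
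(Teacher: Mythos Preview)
Your approach is correct in substance, but it differs from the paper's proof, which is simply a list of citations to Randal-Williams--Wahl \cite{RW}: Prop.~2.6(i)+(ii) there gives that $0$ is initial and $U\G$ is prebraided monoidal, Thm.~1.8(c)+(d) gives \textbf{H1} and \textbf{H2}, and Prop.~2.10 identifies the underlying groupoid with $\G$. What you have written is essentially a sketch of those arguments unpacked from scratch; the benefit is self-containment, the cost is that you are redoing work already in the literature.

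Two small points to tighten. First, your coset notation is reversed: from $f'=f\circ(g\oplus\id_m)$ you get left cosets $f\cdot(G_{n-m}\oplus 1)$, so $\Hom_{U\G}(m,n)\cong G_n/G_{n-m}$, not $G_{n-m}\backslash G_n$ (the paper uses the former throughout). Your substantive claims (transitivity of postcomposition, identification of $\Aut_{U\G}(n)$ with $G_n$) are unaffected. Second, and more importantly, you never verify that $U\G$ is a \emph{monoidal} category in the first place, which is a prerequisite for it to be homogeneous. This is exactly where the braiding on $\G$ enters: defining $\oplus$ on non-invertible morphisms of $U\G$ requires permuting complement summands past each other, and that is what the braiding supplies (this is \cite[Prop.~2.6]{RW}). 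Your discussion of the braiding on $U\G$ touches on this implicitly, but it should come before \textbf{H1}/\textbf{H2} rather than after. The line ``$\phi\in 1\times G_{m+k}$'' in your \textbf{H2} sketch is garbled; the correct conclusion, as you eventually write, is $\phi\in G_m\times 1$, and this drops out immediately once you observe that $\iota_m\oplus\id_k$ is the coset $\id_{m+k}\cdot G_m$ and that $\phi$ fixing it means $\phi\in G_m\oplus\id_k$.
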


\begin{proof}
In \cite[Prop 2.6(i)+(ii)]{RW} it is shown that $0$ is initial in $U\G$ and that $U\G$ is prebraided. For \con{H1} and \con{H2} the statements \cite[Thm 1.8(c)+(d)]{RW} apply. For the second statement \cite[Prop 2.10]{RW} applies.
\end{proof}

In \cite[Rem 1.4]{RW} it is already stated that in every homogeneous category
\[ \Hom(B,A\oplus B) \cong \Aut(A\oplus B)/ \Aut(A).\]
For $U\G$ this means
\[ \Hom(n,m\oplus n) \cong G_{m+ n}/G_m.\]
Because there is cancellation in $\NN$, the homomorphisms are given by equivalence classes of pairs $(f,m)$ for some $f\in G_{m+n}$ and the above isomorphism is given by
\[ [f,m] \longmapsto fG_m.\]
The composition is then
\[ fG_l \circ gG_{m} = f(\id_l \oplus g)G_{l+m}\]
for $fG_{l}\colon m\oplus n \to l \oplus m \oplus n$ and $gG_m\colon n \to m \oplus n$.

The monoidal structure on $U\G$ is then given by
\[ f_1G_{m_1}\oplus f_2G_{m_2} = (f_1\oplus f_2)(\id_{m_1} \oplus b^{-1}_{n_1,m_2} \oplus \id_{n_2})G_{m_1+m_2}\]
for $f_1G_{m_1}\in \Hom(n_1,m_1\oplus n_1)$ and $f_2G_{m_2} \in \Hom(n_2,m_2\oplus n_2)$.

\begin{ex}\label{ex:Symbraid}
$\mathfrak S = (\mathfrak S_n)_{n\in\NN}$ is a braided stability groupoid with the braiding $b_{m,n}\in \mathfrak S_{m+n}$ given by the permutation
\[ b_{m,n}(i) =\begin{cases} i+n & i\le m\\ i-m &i>m.\end{cases}\]
In fact, the braiding of $\mathfrak S$ is a symmetry, wherefore it is also a symmetry on $U\mathfrak S$.

An example of a braided stability groupoid that is not symmetric is given by the braid groups $\beta = (\beta_n)_{n\in \NN}$. Its braiding $b_{m,n} \in \beta_{m+n}$ is given by the following diagram.
\[ b_{m,n}  = \quad 
\begin{tikzpicture}[x=.5cm,y=-1.5cm,baseline=-.8cm]

\node[v] (a1) at (0,0) {};
\node at (1,0) {$\cdots$};
\node[v] (a2) at (2,0) {};
\node[v] (a3) at (3,0) {};
\node at (4,0) {$\cdots$};
\node[v] (a4) at (5,0) {};

\draw [decorate,decoration={brace,amplitude=4pt,raise=4pt},yshift=0pt]
(0,0) -- (2,0) node [black,midway,yshift=12pt] {\scalebox{.8}{$m$}};
\draw [decorate,decoration={brace,amplitude=4pt,raise=4pt},yshift=0pt]
(3,0) -- (5,0) node [black,midway,yshift=12pt] {\scalebox{.8}{$n$}};
\draw [decorate,decoration={brace,amplitude=4pt,raise=4pt},yshift=0pt]
(2,1) -- (0,1) node [black,midway,yshift=-12pt] {\scalebox{.8}{$n$}};
\draw [decorate,decoration={brace,amplitude=4pt,raise=4pt},yshift=0pt]
(5,1) -- (3,1) node [black,midway,yshift=-12pt] {\scalebox{.8}{$m$}};

\node[v] (b1) at (0,1) {};
\node at (1,1) {$\cdots$};
\node[v] (b2) at (2,1) {};
\node[v] (b3) at (3,1) {};
\node at (4,1) {$\cdots$};
\node[v] (b4) at (5,1) {};

\draw[very thick] (a1) to[out=270, in=90] (b3);
\draw[very thick] (a2) to[out=270, in=90] (b4);
\draw[white , double=black , very thick , double distance =1.3pt] (a3) to[out=270, in=90] (b1);
\draw[white , double=black , very thick , double distance =1.3pt] (a4) to[out=270, in=90] (b2);
\end{tikzpicture}
\quad \neq \quad
\begin{tikzpicture}[x=.5cm,y=-1.5cm,baseline=-.8cm]

\node[v] (a1) at (0,0) {};
\node at (1,0) {$\cdots$};
\node[v] (a2) at (2,0) {};
\node[v] (a3) at (3,0) {};
\node at (4,0) {$\cdots$};
\node[v] (a4) at (5,0) {};

\draw [decorate,decoration={brace,amplitude=4pt,raise=4pt},yshift=0pt]
(0,0) -- (2,0) node [black,midway,yshift=12pt] {\scalebox{.8}{$m$}};
\draw [decorate,decoration={brace,amplitude=4pt,raise=4pt},yshift=0pt]
(3,0) -- (5,0) node [black,midway,yshift=12pt] {\scalebox{.8}{$n$}};
\draw [decorate,decoration={brace,amplitude=4pt,raise=4pt},yshift=0pt]
(2,1) -- (0,1) node [black,midway,yshift=-12pt] {\scalebox{.8}{$n$}};
\draw [decorate,decoration={brace,amplitude=4pt,raise=4pt},yshift=0pt]
(5,1) -- (3,1) node [black,midway,yshift=-12pt] {\scalebox{.8}{$m$}};

\node[v] (b1) at (0,1) {};
\node at (1,1) {$\cdots$};
\node[v] (b2) at (2,1) {};
\node[v] (b3) at (3,1) {};
\node at (4,1) {$\cdots$};
\node[v] (b4) at (5,1) {};

\draw[very thick] (a3) to[out=270, in=90] (b1);
\draw[very thick] (a4) to[out=270, in=90] (b2);
\draw[white , double=black , very thick , double distance =1.3pt] (a1) to[out=270, in=90] (b3);
\draw[white , double=black , very thick , double distance =1.3pt] (a2) to[out=270, in=90] (b4);
\end{tikzpicture}
\quad=b_{n,m}^{-1}
\]
%An example of a braided stability groupoid that is not symmetric is given by the mapping class groups $\Mod\Sigma = (\Mod\Sigma_{g,1})_{g\in \NN}$ of surfaces of genus $g$ and one boundary component. To give a monoidal structure on this groupoid, some work is required. We can take \[ \Mod\Sigma=(\mathcal M_2)_{D^2, (S^1\times S^1)\setminus \mathring D^2}\] considered in \cite[Sec 5.6]{RW}. Here for the monoidal structure two surfaces are glued along an interval on the boundary. Why there is no symmetry on $\Mod\Sigma$ is also extensively discussed in \cite[Sec 5.6]{RW}.
\end{ex}

%\vspace{1ex}
A stability category of a braided stability groupoid is also a weakly complemented category as defined by Putman and Sam \cite{PS}.

\begin{Def}\label{Def:weakly complemented category}
Let  $(\C,\oplus,0)$ be a monoidal category with initial $0$. $\C$ is called a \emph{weakly complemented category} if it satisfies the following conditions for all objects $A,B,C\in \C$:
\begin{enumerate}
\item All morphisms are monomorphisms.
\item \label{item:complinj} The map $\Hom(A\oplus B, C) \to \Hom(A,C) \times \Hom(B,C)$ given by
\[ \psi \mapsto (\psi \circ (\id_A \oplus \iota_B), \psi \circ (\iota_A \oplus \id_B))\]
is injective.
\item For every morphism $\psi\in \Hom(A,B)$ there is an object $D$ and an isomorphism $f\colon D\oplus A \to B$ such that $\psi = f\circ (\iota_D \oplus \id_A)$. The pair $(D,f)$ is uniquely determined up to isomorphisms in the comma category $(-\oplus A\downarrow B)$.
\end{enumerate}
\end{Def}

\begin{prop}
Let $\G=(G_n)_{n\in\NN}$ be a braided stability groupoid, then $U\G$ is a weakly complemented category.
\end{prop}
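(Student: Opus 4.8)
The plan is to verify the three axioms of a weakly complemented category for $U\G$ directly, using the explicit description of morphisms in $U\G$ as equivalence classes of pairs $(f,C)$ together with the assumptions in \autoref{Def:stability groupoid} and \autoref{prop:stabcat is homogeneous}. Throughout I will use that a morphism $n\to m\oplus n$ in $U\G$ is the same as a coset $fG_m$ in $G_{m+n}$, and that composition is $fG_l\circ gG_m = f(\id_l\oplus g)G_{l+m}$.

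First I would check condition (c), since it is essentially built into the definition of $U\G$: given $\psi\in\Hom(A,B)$, write $\psi=[f,D]$ with $f\colon D\oplus A\to B$ an isomorphism in $\G$; then $\psi=f\circ(\iota_D\oplus\id_A)$ by definition of how the pair $(f,D)$ encodes the morphism, and the uniqueness up to isomorphism in $(-\oplus A\downarrow B)$ is precisely the equivalence relation defining morphisms of $U\G$. Next, condition (a): every morphism of $U\G$ is a monomorphism. Since all objects of $U\G$ are natural numbers and $U\G$ has cancellation (as noted in the excerpt, $\Hom(n,m\oplus n)\cong G_{m+n}/G_m$ with composition formula above), it suffices to observe that if $fG_l\circ gG_m = fG_l\circ g'G_m$, i.e. $f(\id_l\oplus g)G_{l+m}=f(\id_l\oplus g')G_{l+m}$, then $(\id_l\oplus g)(\id_l\oplus g')^{-1}=\id_l\oplus(g(g')^{-1})\in G_{l+m}$, and I need this element to lie in $G_l\times 1$ forcing $g(g')^{-1}\in 1$; this uses injectivity of $\oplus$ (axiom (a) of stability groupoid) together with the fact that $1\times G_m\cap (G_l\times 1)\cdot(\text{something})$ — more carefully, one uses that $\id_l\oplus h\in G_l\times 1$ iff $h=1$, which follows since $G_l\times 1$ meets $1\times G_m$ only in the trivial group (a special case of axiom (c) of \autoref{Def:stability groupoid} with the appropriate indices). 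I would spell this out cleanly rather than wave at it.

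The main obstacle is condition (b), the injectivity of $\Hom(A\oplus B,C)\to\Hom(A,C)\times\Hom(B,C)$. Here $A=a$, $B=b$, $C=c$ are natural numbers, a morphism $\psi\colon a\oplus b\to c$ exists only if $a+b\le c$, say $c=d\oplus a\oplus b$, and $\psi$ corresponds to a coset $hG_d$ in $G_c=G_{d+a+b}$. Restricting along $\id_a\oplus\iota_b$ and $\iota_a\oplus\id_b$ gives the two cosets $h(\id_a\oplus\iota_b)^{\sharp}G_{d+b}$ and $h(\iota_a\oplus\id_b)^{\sharp}G_{d+a}$ in $G_{d+a+b}$, where the decorations denote the appropriate block-permutation/inclusion maps coming from the monoidal and prebraided structure (this is where I must be careful: composing with $\iota_b\colon 0\to b$ in $U\G$ introduces a coset enlargement from $G_d$ to $G_{d+b}$, and the relevant subgroup is $G_d\times G_b\times 1$ up to conjugation by a braiding). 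So if $\psi,\psi'$ have the same restrictions, then $h^{-1}h'$ lies simultaneously in (a conjugate of) $G_{d+b}$ and in (a conjugate of) $G_{d+a}$, both viewed inside $G_{d+a+b}$; I then need axiom (c) of \autoref{Def:stability groupoid}, $(G_{l+m}\times 1)\cap(1\times G_{m+n})=1\times G_m\times 1$, to conclude $h^{-1}h'$ lies in the common part, which after identification is $G_d$ (up to the braiding that rearranges the $a$ and $b$ blocks) — hence $hG_d=h'G_d$ and $\psi=\psi'$. The delicate point I expect to actually fight with is getting the block positions and the braiding conjugations exactly right so that the two ``restriction subgroups'' are honestly of the form $G_{l+m}\times 1$ and $1\times G_{m+n}$ (or conjugates thereof) to which axiom (c) applies; I would set up coordinates with $c$ split as $d\oplus a\oplus b$ and track the braiding isomorphisms explicitly, possibly reducing first to the symmetric case for intuition and then noting the argument only uses the prebraided axiom $b_{A,B}\circ(\id_A\oplus\iota_B)=\iota_B\oplus\id_A$ from \autoref{Def:homogeneouscat}, together with \autoref{rem:symH2}.

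Once (a), (b), (c) are established the proposition follows immediately by \autoref{Def:weakly complemented category}. I would present the proof in the order (c), then (a), then (b), since (c) is almost a restatement of the construction of $U\G$, (a) is a short cancellation argument, and (b) is the substantive computation where all three stability-groupoid axioms get used.
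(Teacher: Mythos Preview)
Your plan for (c) is exactly the paper's (one line: it holds by construction of $U\G$), and your plan for (a) is the same computation the paper does, modulo a cosmetic reduction via \con{H1} to the identity coset. One small correction: in (a) you write that you need $\id_l\oplus g(g')^{-1}\in G_l\times 1$, forcing $g(g')^{-1}=1$. That is too strong and not what the argument gives. What you actually get is $\id_l\oplus g(g')^{-1}\in (G_{l+m}\times 1)\cap(1\times G_{m+n})=1\times G_m\times 1$, hence $g(g')^{-1}\in G_m\times 1$, which is exactly what you need for $gG_m=g'G_m$.

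For (b) your route genuinely differs from the paper's. You propose to compute the two restriction cosets explicitly (which introduces a braiding conjugate of $G_{d+b}\times 1$) and then intersect using axiom~(c) of \autoref{Def:stability groupoid}. This can be made to work, but as you anticipate, the conjugate $(\id_d\oplus b_{a,b}^{-1})(G_{d+b}\times 1)(\id_d\oplus b_{a,b})$ is not literally of the shape $G_{l+m}\times 1$ or $1\times G_{m+n}$, so axiom~(c) does not apply directly and some unwinding is needed. The paper avoids this entirely: it uses \con{H1} to reduce to $\phi=\id_{l\oplus m\oplus n}G_l$ versus $g\phi$, and then applies \con{H2} (already available from \autoref{prop:stabcat is homogeneous}) twice. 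The second restriction equation gives $g\circ(\iota_{l+m}\oplus\id_n)=\iota_{l+m}\oplus\id_n$, so $g=g'\oplus\id_n$ by \con{H2}; the first then gives $g'\circ(\iota_l\oplus\id_m)=\iota_l\oplus\id_m$, so $g'=g''\oplus\id_m$, and hence $g\phi=\phi$. No braidings enter at all. Your approach buys independence from the homogeneity result (you are re-deriving the needed consequence of \con{H2} inside the proof), while the paper's buys a much shorter argument by cashing in \autoref{prop:stabcat is homogeneous}.
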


\begin{proof}
Let $l,m,n\in \NN$, ie objects of $\G$ and $U\G$. To prove (a), by \con{H1}, we only have to show that $\phi = \id_{l\oplus m \oplus n}G_l$ is a monomorphism. Let $\psi = fG_m, \psi'=f'G_m\colon n \to m \oplus n$ such that
\[  (\id_l \oplus f)G_{l+ m} =\phi \circ  \psi   = \phi\circ \psi'=   (\id_l \oplus f')G_{l+ m}.\]
Then there is an automorphism $g\in G_{l+m}$ such that
\[ \id_l\oplus f = (\id_l \oplus f') \circ (g\oplus \id_n).\]
By definition 
\[ g\oplus \id_n = \id_l \oplus ({f'}^{-1}\circ f) \in (G_{l+m} \times 1) \cap (1\times G_{m+n}) =  1 \times G_m \times 1.\]
Therefore $\psi$ and $\psi'$ must coincide.

Using \con{H1} to prove (b), it is enough to show that for every $g\in G_{l+m+n}$ and $\phi=  \id_{l\oplus m \oplus n}G_l$ the two equations
\[ \phi\circ (\id_m \oplus \iota_n) = g\phi \circ (\id_m \oplus \iota_n) \quad\text{and}\quad  \phi \circ(\iota_m \oplus \id_n) = g\phi\circ(\iota_m \oplus \id_n)\]
imply $\phi= g\phi$. Using the second equation we get that
\[ \iota_{l\oplus m} \oplus \id_n = g(\iota_{l\oplus m} \oplus \id_n)\]
thus by \con{H2} there is a $g' \in G_{l+m}$ with $g = g' \oplus \id_n$. Applying the same trick to the first equation we get
\[ \iota_l \oplus \id_m = g (\iota_l \oplus \id_m)\]
and therefore the existence of a $g'' \in G_l$ with $g' = g'' \oplus \id_m$. For $g = g'' \oplus \id_m \oplus \id_n$ clearly $\phi =g\phi$.

(c) is true by definition of $U\G$.
\end{proof}

\noindent\fbox{\parbox[b]{\textwidth}{\textit{For the remainder of this paper we assume that $\G=(G_n)_{n\in\NN}$ is a braided stability  groupoid and thus $U\G$ is its prebraided stability category.  Given a functor $F$ from $U\G$ to some category $\D$, we denote the images $F(n)$ by $F_n$.}

\textit{Further for the object $1\in U\G$, we abbreviate $S_1,\Sigma_1,K^1_\bullet,\widetilde C_*^1, \widetilde H_*^1$ by $S,\Sigma,K_\bullet,\widetilde C_*,\widetilde H_*$, respectively.}}}

%%%%%%%%%% Section  %%%%%%%%%%%
\section{Central stability and stability categories}\label{sec:central stability and stab cats}

In this section, we explain some basic properties of the functor $K_\bullet$ from \autoref{Def:centralstabilityhomology} in the context of stability categories.

\begin{prop}\label{prop:Hominj}
Let $l,m,n$ be objects in $U\G$. Then the map $\Hom(m,l\oplus m) \to \Hom(m\oplus n,l\oplus m\oplus n)$ given by $\psi \mapsto \psi \oplus \id_n$ is injective and its image is
\[ \{\chi \in \Hom(m\oplus n,l\oplus m\oplus n) \mid  \chi\circ (\iota_m \oplus \id_n) = \iota_{l\oplus m} \oplus \id_n\}.\]
(Because of  \autoref{rem:symH2}, the symmetric version of this statement is also true.)
\end{prop}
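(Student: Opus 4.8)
The plan is to argue entirely inside the explicit coset model of $U\G$ recalled just before this section, where the statement reduces to elementary group theory together with axiom~(1) of a stability groupoid (injectivity of $\oplus$). Write $\Hom(n,k\oplus n)\cong G_{k+n}/G_k$ via $[f,k]\mapsto fG_k$, with composition $fG_l\circ gG_m=f(\id_l\oplus g)G_{l+m}$, where in each case the subgroup in the denominator is understood as sitting inside the ambient group via $-\oplus\id$. The well-definedness and functoriality of $\psi\mapsto\psi\oplus\id_n$ are automatic from bifunctoriality of $\oplus$, so the content is injectivity and the image computation.

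First I would record the coset representatives of the three morphisms occurring in the statement. Using the monoidal formula for $U\G$ and the fact that $b_{\,\cdot\,,0}=\id$, one computes that for $\psi=fG_l\in\Hom(m,l\oplus m)$ one has $\psi\oplus\id_n=(f\oplus\id_n)G_l$ in $\Hom(m\oplus n,l\oplus m\oplus n)\cong G_{l+m+n}/G_l$, that $\iota_m\oplus\id_n=\id_{m+n}G_m$ in $\Hom(n,m\oplus n)$, and that $\iota_{l\oplus m}\oplus\id_n=\id_{l+m+n}G_{l+m}$ in $\Hom(n,l\oplus m\oplus n)$.

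For injectivity, suppose $\psi=fG_l$ and $\psi'=f'G_l$ satisfy $(f\oplus\id_n)G_l=(f'\oplus\id_n)G_l$, so that $f\oplus\id_n=\bigl(f'(g\oplus\id_m)\bigr)\oplus\id_n$ for some $g\in G_l$. Both sides lie in the image of the injection $\oplus\colon G_{l+m}\times G_n\inject G_{l+m+n}$, hence $f=f'(g\oplus\id_m)$ and therefore $\psi=\psi'$. For the image, write an arbitrary $\chi\in\Hom(m\oplus n,l\oplus m\oplus n)$ as $\chi=hG_l$ with $h\in G_{l+m+n}$. By the composition formula, $\chi\circ(\iota_m\oplus\id_n)=hG_{l+m}$, so the equation $\chi\circ(\iota_m\oplus\id_n)=\iota_{l\oplus m}\oplus\id_n$ holds if and only if $h\in\{g\oplus\id_n\mid g\in G_{l+m}\}$. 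On the other hand $\chi$ lies in the image of $-\oplus\id_n$ if and only if $hG_l=(f\oplus\id_n)G_l$ for some $f\in G_{l+m}$, i.e.\ if and only if $h=\bigl(f(g'\oplus\id_m)\bigr)\oplus\id_n$ for some $f\in G_{l+m}$ and $g'\in G_l$; since $f(g'\oplus\id_m)$ again ranges over all of $G_{l+m}$, this is the same condition $h\in\{g\oplus\id_n\mid g\in G_{l+m}\}$. Hence the image is exactly the asserted subset.

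The main obstacle is purely bookkeeping: correctly identifying $\iota_m\oplus\id_n$ and $\iota_{l\oplus m}\oplus\id_n$ with their coset representatives, and keeping straight which copy of each of $G_l$, $G_m$, $G_{l+m}$ sits inside $G_{l+m+n}$ via which $-\oplus\id$ map; there is no conceptual difficulty once the model is set up. Finally, the parenthetical symmetric statement follows by the same conjugation-by-a-braiding trick used in \autoref{rem:symH2}, applied to the braiding isomorphisms relating $m\oplus n$ with $n\oplus m$ and $(l\oplus m)\oplus n$ with $n\oplus(l\oplus m)$.
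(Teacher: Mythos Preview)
Your proof is correct and takes essentially the same approach as the paper: both arguments work in the explicit coset model $\Hom(n,k\oplus n)\cong G_{k+n}/G_k$, use injectivity of $\oplus\colon G_{l+m}\times G_n\hookrightarrow G_{l+m+n}$ for the injectivity claim, and identify the image by computing $\chi\circ(\iota_m\oplus\id_n)=hG_{l+m}$ and comparing with $\id_{l+m+n}G_{l+m}$. The only cosmetic difference is that the paper verifies the ``image $\subseteq$ set'' containment directly via bifunctoriality of $\oplus$ (writing $(\psi\oplus\id_n)\circ(\iota_m\oplus\id_n)=(\psi\circ\iota_m)\oplus\id_n=\iota_{l\oplus m}\oplus\id_n$), whereas you reduce both sides to the single condition $h\in G_{l+m}\oplus\id_n$ and observe they coincide.
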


\begin{proof}
Given two morphisms from $fG_l, f'G_l \colon m \to l\oplus m$, and assume that 
\[(f \oplus \id_n)G_l = fG_l\oplus \id_n =  f'G_l\oplus \id_n = (f'\oplus \id_n)G_l.\] 
Then there is an automorphism $g\in G_l$ such that 
\[f\oplus \id_n = (f'\oplus \id_n) \circ (g \oplus \id_{m\oplus n})\]
which implies that
\[ g \oplus \id_{m \oplus n} = ({f'}^{-1}\circ f) \oplus \id_n.\]
Because $G_{l+m} \to G_{l+m+n}$ is injective,
\[ g \oplus \id_m  = {f'}^{-1}\circ f.\]
Thus 
\[  fG_l = f'G_l\]
and the map 
\[ \psi \longmapsto \psi \oplus \id_n\]
is injective.

If $\psi\in \Hom(m,m\oplus n)$,
\[ (\psi \oplus \id_n)\circ (\iota_m \oplus \id_n)  = (\psi \circ \iota_m) \oplus \id_n = \iota_{l\oplus m} \oplus \id_n.\]

%Note that
%\[ \iota_A \oplus D = [\id_{A\oplus D}, A].\]
%Then
%\[ [f \oplus D,C] \circ [\id_{A\oplus D}, A] = [f \oplus D, C\oplus A]\]
%which is the same as $[\id_{B\oplus D}, B]$ with the equivalence given by the isomorphism $f\colon C\oplus A \to B$. 

Let on the other hand $\chi = hG_l \colon  m \oplus n \to l\oplus m\oplus n$ such that
\[ \chi \circ (\iota_m \oplus \id_n) = \iota_{l\oplus m} \oplus \id_n.\]
Then 
\[ hG_{l+m} = \id_{l\oplus m\oplus n}G_{l+m},\]
whence there is an isomorphism $g\in G_{l+m}$ with $h = g \oplus \id_n$. Thus 
\[ gG_l \oplus \id_n = hG_{l}= \chi.\qedhere\]
\end{proof}

\begin{prop}\label{prop:sigmap}
Let $F \colon U\G \to \Set$. Then there is an isomorphism
\[  (K_{p-1}F)_n = (\Sigma^p  F)_n \cong G_n\times_{G_{n-p}}F_{n-p}.\]

For a map $\eta = hG_{p-q}\in \Hom(q,p)$ the corresponding morphism 
\[  (\Sigma^p  F)_n \cong G_n\times_{G_{n-p}}F_{n-p} \longrightarrow G_n\times_{G_{n-q}}F_{n-q}\cong(\Sigma^qF)_n\]
is given by 
\[[g,x] \longmapsto [g(\id_{n-p}\oplus h),\phi'(x)]\]
with $\phi' = F(\id_{n-p}\oplus \iota_{p-q})$. This morphism is independent of the choice of $h\in \eta$.
\end{prop}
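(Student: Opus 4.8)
The plan is to run everything off the colimit presentation of $\Sigma^p$. Recall that $\Sigma$ is a monoidal functor $(U\G)^{\op}\to\End(\Fun(U\G,\D))$ with $\Sigma_{X\oplus Y}\cong\Sigma_X\circ\Sigma_Y$; since the object $1^{\oplus p}$ of $U\G$ is just the natural number $p$, we have $\Sigma^p\cong\Sigma_p$, so $(K_{p-1}F)_n=(\Sigma^p F)_n\cong\colim_{(-\oplus p\downarrow n)}(F\circ U)$, where $U\colon(-\oplus p\downarrow n)\to U\G$, $(A,\psi\colon A\oplus p\to n)\mapsto A$, is the forgetful functor. If $n<p$ there are no such $\psi$, the comma category is empty, and both sides are the empty set; from now on assume $n\ge p$.

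First I would cut the index category down to its essential part. Let $\mathcal E\subseteq(-\oplus p\downarrow n)$ be the full subcategory on the objects with $A=n-p$, and claim that $\mathcal E\hookrightarrow(-\oplus p\downarrow n)$ is final, so the colimit may be taken over $\mathcal E$. Given any $(A,\phi)$, the weakly-complemented factorization property (c) of \autoref{Def:weakly complemented category} (valid for $U\G$) writes $\phi=f\circ(\iota_D\oplus\id_{A\oplus p})$ with $f$ an isomorphism and $D\oplus A=n-p$; re-associating presents $\phi$ as $A\oplus p\to(n-p)\oplus p\to n$ (first map $(\iota_D\oplus\id_A)\oplus\id_p$, second $f$), that is, as an object of $(A,\phi)/\mathcal E$, so this category is nonempty. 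For connectedness I would show that any factorization $\phi=\psi'\circ(\alpha'\oplus\id_p)$ through $\mathcal E$ admits a morphism in $(A,\phi)/\mathcal E$ to the distinguished one, extracting the needed element of $G_{n-p}$ from the uniqueness-up-to-isomorphism clause of (c). Hence $(\Sigma^p F)_n\cong\colim_{\mathcal E}(F\circ U)$.

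Next I would compute this colimit. For $A=n-p$ one has $A\oplus p=n$, so an object of $\mathcal E$ is an automorphism $\psi\in\Aut_{U\G}(n)=G_n$, a morphism $(n-p,\psi)\to(n-p,\psi')$ is an $\alpha\in\Aut_{U\G}(n-p)=G_{n-p}$ with $\psi'\circ(\alpha\oplus\id_p)=\psi$, and $F\circ U$ sends it to $F(\alpha)\colon F_{n-p}\to F_{n-p}$. Thus $\mathcal E$ is the action groupoid of the free right $G_{n-p}$-action $\psi\cdot\alpha=\psi\circ(\alpha\oplus\id_p)$ on $G_n$ (with $G_{n-p}\hookrightarrow G_n$ by $\alpha\mapsto\alpha\oplus\id_p$), and $F\circ U$ is the constant functor $F_{n-p}$ twisted through $F$; its colimit is $(G_n\times F_{n-p})/{\sim}\;=G_n\times_{G_{n-p}}F_{n-p}$. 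Tracking the residual $\Aut_{U\G}(n)=G_n$-action (left translation on $G_n$) shows the isomorphism is $G_n$-equivariant, so in the module case it becomes $\Ind_{G_{n-p}}^{G_n}F_{n-p}$.

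For the functoriality statement, a morphism $\eta\colon q\to p$ in $U\G$ induces $\Sigma^p F\to\Sigma^q F$ via the comma-category functor $(A,\psi)\mapsto(A,\psi\circ(\id_A\oplus\eta))$ recorded earlier in this section; this carries $(n-p,g,x)$ with $g\in G_n$ to $(n-p,\ g\circ(\id_{n-p}\oplus\eta),\ x)$, which then has to be reduced back into $\mathcal E$ for $q$. Writing $\eta=hG_{p-q}$ with $h\colon(p-q)\oplus q\to p$ in $\G$, the monoidal structure on $U\G$ gives $\id_{n-p}\oplus\eta=(\id_{n-p}\oplus h)\circ(b^{-1}_{n-p,p-q}\oplus\id_q)$ with $h\in G_p$ absorbed into the group coordinate and the reduction contributing $\iota_{p-q}\oplus\id_{n-p}\colon n-p\to n-q$ on the $F$-side; the prebraided relation $b_{n-p,p-q}\circ(\id_{n-p}\oplus\iota_{p-q})=\iota_{p-q}\oplus\id_{n-p}$ then lets this braiding cancel against the one in the monoidal structure, converting the formula into $[g,x]\mapsto[g\cdot(\id_{n-p}\oplus h),\ \phi'(x)]$ with $\phi'=F(\id_{n-p}\oplus\iota_{p-q})$. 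Independence of the representative $h\in\eta$ is the check that replacing $h$ by $h\circ(g_0\oplus\id_q)$, $g_0\in G_{p-q}$, multiplies $g\cdot(\id_{n-p}\oplus h)$ by an element of the copy of $G_{n-q}$ that we balance over while fixing $\phi'(x)$, since $g_0\circ\iota_{p-q}=\iota_{p-q}$. I expect the main obstacle to be the connectedness half of the finality argument — producing from the uniqueness clause of (c) an actual morphism of $(A,\phi)/\mathcal E$ between two given factorizations — together with the careful tracking of braidings in the functoriality step (those baked into the monoidal structure of $U\G$ versus those needed to move the inserted complement $\iota_{p-q}$ from one side to the other); everything else is routine colimit bookkeeping.
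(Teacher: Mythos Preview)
Your approach is essentially the same as the paper's: both use the colimit description $(\Sigma^p F)_n\cong\colim_{(-\oplus p\downarrow n)}F$, reduce to the objects with $A=n-p$, and identify the resulting quotient with $G_n\times_{G_{n-p}}F_{n-p}$; the functoriality computation via the comma-category functor and the cancellation of the braiding against the prebraided identity is also the same. The only stylistic difference is that you package the reduction as a finality statement for the inclusion $\mathcal E\hookrightarrow(-\oplus p\downarrow n)$, whereas the paper argues directly that the factorization through an automorphism gives a surjection $G_n\times F_{n-p}\twoheadrightarrow\colim$ and that the remaining relations are precisely those coming from $G_{n-p}$; your ``connectedness'' concern is exactly the content of the paper's (unjustified) assertion about the relations, so neither version is more or less complete on that point.
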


\begin{proof}
We know from  \autoref{sec:centralstab} that
\[ (\Sigma^p  F)_n \cong  \colim_{(-\oplus p\downarrow n)} F.\]
Note that by construction, every morphism $m\oplus p \to n$ factors through an automorphism $(n-p)\oplus p \to n$. Therefore there is a surjection
\[G_n \times F_{n-p} \surject \colim_{(-\oplus p\downarrow n)} F.\]
There are still some relations, which are given by the precomposition by elements in $G_{n-p}$. Therefore
\[  (\Sigma^p  F)_n \cong G_n\times_{G_{n-p}}F_{n-p}\]
together with the (choice-free) maps
\[ F(m,\psi)=F_m \longrightarrow G_n\times_{G_{n-p}}F_{n-p}\]
given by
\[ x \mapsto [g,\phi(x)]\]
where $\phi = F(\iota_{n-p-m} \oplus \id_{ m})$ and $\psi= gG_{n-p-m}$.

To understand the functoriality, we see that 
\[  (-\oplus p\downarrow n) \longrightarrow (-\oplus q\downarrow n)\]
is given by
\[ (m,\psi) \longmapsto (m,\psi\circ (\id_m\oplus\eta) ).\]
Therefore 
\[ [g,x] \in G_n\times_{G_{n-p}}F_{n-p}\]
maps to 
\[ \colim_{(-\oplus q\downarrow n)} F\]
via $F_{n-p}$ corresponding to $(n-p,g\circ (\id_{n-p}\oplus \eta)) \in (-\oplus q \downarrow n)$. Thus 
\[ [g,x] \longmapsto [g(\id_{n-p}\oplus h)\circ(b^{-1}_{n-p,p-q}\oplus \id_q), \phi(x)] = [g(\id_{n-p}\oplus h), \phi'(x)]\] 
where $h\in G_p$ with $\eta = hG_{p-q}$.
\end{proof}

A similar proposition can be made for functors $U\G\to \xmod{R}$:

\begin{prop}\label{prop:SigmaMod}
Let $V\colon U\G \to \xmod{R}$. Then there is an isomorphism
\[ (\Sigma^pV)_n \cong RG_n \otimes_{RG_{n-p}} V_{n-p}.\]

For a map $\eta = hG_{p-q}\in \Hom(q,p)$ the corresponding morphism 
\[  (\Sigma^p  V)_n \cong RG_n\otimes_{RG_{n-p}}V_{n-p} \longrightarrow RG_n\otimes_{RG_{n-q}}V_{n-q}\cong(\Sigma^qV)_n\]
is given by 
\[g\otimes v \longmapsto g(\id_{n-p}\oplus h)\otimes \phi'(x)\]
with $\phi' = V(\id_{n-p}\oplus \iota_{p-q})$. This morphism is independent of the choice of $h\in \eta$.
\end{prop}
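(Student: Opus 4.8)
The plan is to reduce the statement for module-valued functors to the statement for set-valued functors, which was already proven in \autoref{prop:sigmap}, by exploiting the fact that the free $R$-module functor $R[-]\colon \Set \to \xmod R$ is a left adjoint and hence commutes with colimits. More precisely, recall from \autoref{sec:centralstab} that $(\Sigma^p V)_n \cong \colim_{(-\oplus p\downarrow n)} V$, where the colimit is taken over the comma category $(-\oplus p \downarrow n)$. Since $\xmod R$ is cocomplete and $R[-]$ preserves colimits, there is a natural way to pass between the two settings: given a set-valued functor $F$, one has $R[\Sigma^p F] \cong \Sigma^p (R[F])$. But a general module-valued $V$ is not of the form $R[F]$, so instead I would run the colimit computation of \autoref{prop:sigmap} directly, replacing $\Set$ by $\xmod R$ everywhere and replacing the induced set $G_n \times_{G_{n-p}} F_{n-p}$ by the induced module $RG_n \otimes_{RG_{n-p}} V_{n-p}$.

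Concretely, the first step is to observe, exactly as in the proof of \autoref{prop:sigmap}, that every morphism $m\oplus p \to n$ in $U\G$ factors through an isomorphism $(n-p)\oplus p \to n$, so the objects $(n-p, \psi)$ with $\psi\in G_n$ are cofinal/initial enough to compute the colimit; this yields a surjection of $R$-modules $RG_n \otimes_R V_{n-p} \surject (\Sigma^p V)_n$. The second step is to identify the kernel: the morphisms in the comma category between these distinguished objects are precisely given by precomposition with elements of $G_{n-p}$ (acting via $V$ on the second factor and by right multiplication on the first), so the colimit is the coequalizer that computes the tensor product $RG_n \otimes_{RG_{n-p}} V_{n-p}$. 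This is the module-theoretic analogue of forming $G_n \times_{G_{n-p}} F_{n-p}$, and the structure maps $V_m \to RG_n \otimes_{RG_{n-p}} V_{n-p}$ sending $x \mapsto g\otimes \phi(x)$ with $\phi = V(\iota_{n-p-m}\oplus \id_m)$ are again choice-free.

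For the functoriality in $\eta = hG_{p-q}\in \Hom(q,p)$, the third step is identical in form to the set-valued case: the induced functor $(-\oplus p\downarrow n) \to (-\oplus q \downarrow n)$ sends $(m,\psi)\mapsto (m, \psi\circ(\id_m\oplus \eta))$, and tracing the distinguished object $(n-p, g)$ through this functor, together with the identity $b^{-1}_{n-p,p-q}\oplus \id_q$ relating the two factorizations, gives $g\otimes v \mapsto g(\id_{n-p}\oplus h)\otimes \phi'(v)$ with $\phi' = V(\id_{n-p}\oplus \iota_{p-q})$, as claimed. Independence of the choice of representative $h\in\eta$ follows because any two choices differ by an element of $G_{p-q}$, which is absorbed into the $RG_{n-q}$-balancing of the tensor product.

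I do not expect any genuine obstacle here: this is essentially a verbatim translation of \autoref{prop:sigmap} from $\Set$ to $\xmod R$, using that the relevant colimits are computed the same way in both cocomplete categories and that $RG_n\otimes_{RG_{n-p}}(-)$ plays the role that $G_n\times_{G_{n-p}}(-)$ played before. The only point requiring a sentence of care is the identification of the colimit with the tensor product rather than merely a quotient set linearized — i.e., checking that the $R$-module relations imposed by the comma-category morphisms are exactly the balancing relations — but this is standard once one notes that the relations come from the $G_{n-p}$-action on both coordinates.
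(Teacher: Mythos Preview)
Your proposal is correct and matches the paper's approach exactly: the paper gives no explicit proof of \autoref{prop:SigmaMod} but simply introduces it with ``A similar proposition can be made for functors $U\G\to \xmod{R}$,'' intending precisely the verbatim translation of \autoref{prop:sigmap} that you describe. Your added detail about why the colimit over the comma category yields the balanced tensor product (rather than merely a linearized quotient set) is the one point the paper leaves entirely to the reader, and your treatment of it is fine.
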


This allows us to describe the face maps of the semisimpicial set $(K_\bullet F)_n$ for a $U\G$--set $F$. Recall that 
\[ d_i =  \id_i \oplus \iota_1\oplus \id_{p-i} =  (b_{1,i}\oplus \id_{p-i})G_1 \colon p \to  p+1.\] 
Then
\[ (K_pF)_n = G_n\times_{G_{n-p-1}} F_{n-p-1} \longrightarrow G_n\times_{G_{n-p}} F_{n-p} = (K_{p-1}F)_n\]
is given by 
\[ [g,x] \longmapsto [g(\id_{n-p-1}\oplus b_{1,i}\oplus \id_{p-i}),\phi'(x)].\]

\begin{ex}
The functor $F=\Hom(0,-)\colon \C \to \Set$ sends every object to a singleton. Then
\[ (K_pF)_n \cong G_n \times_{G_{n-p-1}} F_{n-p-1} = G_n/G_{n-p-1} = \Hom(p+1 ,n).\]
Which is the semisimplicial set $W_n(0,1)_n$ from \cite[Def 2.1]{RW}.
\end{ex}

\begin{ex}\label{ex:SymK}
We can now pick up  \autoref{ex:FIhomology} again. 
 Let $V\colon \FI \to \xmod{\ZZ}$. As $U\mathfrak S$ from  \autoref{ex:Symbraid} is a skeleton of $\FI$, we have
\[ (K_pV)_n \cong \ZZ\mathfrak S_n \otimes_{\ZZ \mathfrak S_{n-p-1}} V_{n-p-1} \cong \bigoplus_{f\in \Hom_{\FI}(\{n-p,\dots,n\}, \{1, \dots, n\})} V(\{1,\dots,n\}\setminus \im f)\]
by sending $g\otimes v$ to 
\[ g|_{\{1, \dots,n-p-1\}}(v) \in V(\{1,\dots,n\}\setminus g(\{n-p,\dots, n\}))\]
in the summand corresponding to
\[ f = g|_{\{n-p,\dots,n\}}.\]
\end{ex}

Functors $F\colon U\G \to \Set$ that preserve monomorphisms, such as representable functors, send all maps in $U\G$ to injective set maps. For such functors we can split $K_\bullet F$ into a disjoint union.

\begin{prop}\label{prop:injFunctors}
Let $F\colon U\G\to \Set$ preserve monomorphisms. Then the semisimplicial set $(K_\bullet F)_n$ splits disjointly
\[ (K_\bullet F)_n \cong \coprod_{x\in F_n} L^x_\bullet\]
into augmented semisimplicial sets $L_\bullet^x \subset (K_\bullet\Hom(0,-))_n$ with
\[ L_p^x= \{ \sigma \in (K_p\Hom(0,-))_n \mid x \in \im F(\sigma')\},\]
where $\sigma' = f\circ(\id_{n-p-1}\oplus \iota_{p+1})$ is a complement of $\sigma=fG_{n-p-1}$.
\end{prop}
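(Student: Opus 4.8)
The statement compares the semisimplicial set $(K_\bullet F)_n$ for a monomorphism-preserving functor $F$ with the "universal" one $(K_\bullet \Hom(0,-))_n$, which by an earlier example is just $n \mapsto \Hom(p+1,n)$ with face maps given by precomposition. The plan is to produce an explicit bijection on each level $p$ that is compatible with the face maps. Using \autoref{prop:sigmap}, a $p$-simplex of $(K_p F)_n$ is a class $[g,x] \in G_n \times_{G_{n-p-1}} F_{n-p-1}$; using $F$ preserves monomorphisms, I would push such a class forward along the complement map to record where it "lands" in $F_n$.

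\textbf{First step: define the decomposition map.} Given $[g,x] \in (K_p F)_n = G_n \times_{G_{n-p-1}} F_{n-p-1}$, consider the map $\iota_{n-p-1}\oplus \id_{p+1}$... no: rather, for the simplex $\sigma = f G_{n-p-1} \in \Hom(p+1,n) = (K_p\Hom(0,-))_n$ there is the complement $\sigma' = f\circ(\id_{n-p-1}\oplus\iota_{p+1}) \in \Hom(n-p-1,n)$, and $F(\sigma')$ is injective. I would send $[g,x]$ to the pair $(y,\sigma)$ where $\sigma = gG_{n-p-1}$ (the image of $[g,x]$ under $K_p F \to K_p\Hom(0,-)$) and $y = F(\sigma')(x) = F(g\circ(\id_{n-p-1}\oplus\iota_{p+1}))(x) \in F_n$. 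One must check this is well-defined: replacing $g$ by $gg_0$ and $x$ by $g_0^{-1}x$ for $g_0 \in G_{n-p-1}$ changes neither $\sigma$ (since $g_0 \in G_{n-p-1}$) nor, by functoriality, the element $y$. Moreover $y$ depends only on the class, and $(y,\sigma)$ determines $[g,x]$: given $\sigma$ pick any representative $g$, then since $F(\sigma')$ is \emph{injective} there is a unique $x \in F_{n-p-1}$ with $F(\sigma')(x) = y$ (provided $y$ is in the image; that is exactly the membership condition defining $L_p^x$). Hence the fibre over $y$ is precisely $L_p^y = \{\sigma \in (K_p\Hom(0,-))_n \mid y \in \im F(\sigma')\}$, giving the level-$p$ bijection $(K_p F)_n \cong \coprod_{y \in F_n} L_p^y$.

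\textbf{Second step: compatibility with face maps.} I would check that under a face map $d_i$, the $F_n$-label $y$ is preserved, so that the decomposition is semisimplicial, i.e.\ each $L_\bullet^x$ is genuinely a sub-semisimplicial set and the disjoint union is as augmented semisimplicial sets. Concretely, by the formula after \autoref{prop:SigmaMod}, $d_i$ sends $[g,x] \in G_n\times_{G_{n-p-1}}F_{n-p-1}$ to $[g(\id_{n-p-1}\oplus b_{1,i}\oplus\id_{p-i}), \phi'(x)]$ with $\phi' = F(\id_{n-p-1}\oplus\iota_1\oplus\id_{p-i})$ (reindexing $p$ appropriately). Both the new complement morphism and $\phi'$ are built from $F$ applied to compatible morphisms of $U\G$, and a short diagram chase in $U\G$ — using that $b_{1,i}$ only permutes the "new" coordinates and that the complement of $d_i\sigma$ composed with the appropriate inclusion equals the complement of $\sigma$ — shows $F((d_i\sigma)')(\phi'(x)) = F(\sigma')(x) = y$. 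Thus the label is invariant, which is exactly what is needed; the augmentation level $p=-1$ is the trivial check that $\Sigma^0 F = F$ so $(K_{-1}F)_n = F_n = \coprod_{x\in F_n}\{*\}$ with $L^x_{-1}$ a point.

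\textbf{Main obstacle.} The only real content is the bookkeeping in the second step: matching the complement morphism $\sigma'$ of a simplex with the complement of its faces through the braiding elements $b_{1,i}$, and confirming the two expressions for the $F_n$-label agree. This is a routine but slightly fiddly computation with the explicit composition and monoidal formulas for $U\G$ recorded in \autoref{sec:stabcat}; injectivity of $F$ on morphisms is what makes the reconstruction of $x$ from $(y,\sigma)$ unambiguous, and transitivity of the $G_n$-action (condition \con{H1}) is what makes every simplex of $(K_p\Hom(0,-))_n$ of the stated form $fG_{n-p-1}$ so that $\sigma'$ makes sense.
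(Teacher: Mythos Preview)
Your proposal is correct and follows essentially the same approach as the paper. The one efficiency the paper gains is to observe at the outset that, since $(K_{-1}F)_n = F_n$, the augmented semisimplicial set automatically decomposes as the disjoint union of the fibres of the augmentation map $(K_pF)_n \to F_n$; your label $y = F(\sigma')(x)$ is precisely this augmentation (as one sees from \autoref{prop:sigmap} applied to $\iota_{p+1}\colon 0\to p+1$), so the face-map compatibility in your second step comes for free and need not be checked by hand.
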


\begin{proof}
Because $(K_{-1}F)_n = F_n$, the augmented semisimplicial set splits into 
\[ (K_\bullet F)_n \cong \coprod_{x\in F_n} L^x_\bullet\]
with $L_p^x \subset (K_pF)_n$ the preimage of $x\in F_n$ which is
\[ L_p^x = \{ [g,y]\in G_n\times_{G_{n-p-1}}F_{n-p-1} \mid g \phi'(y) =x\}\]
using  \autoref{prop:sigmap}.
Because $F$ preserves monomorphims, the set map $\phi'= F(\id_{n-p-1}\oplus \iota_{p+1})$ is injective. Thus for every $g\in G_n$ there is at most one $y\in F_{n-p-1}$ with $g\phi'(y)=x$. Therefore $L_\bullet^x$ can be included into $(K_\bullet\Hom(0,-))_n$ by sending $[g,y]$ to $gG_{n-p-1}$.

Finally, note that
\[  F(\sigma') = g\phi'.\qedhere\]
\end{proof}

\section{Central stability and finiteness properties}\label{sec:finprop}

\begin{Def}\label{Def:Cset}
We call a functor from a category $\C$ to the category of sets a \emph{$\C$--set} and write $\CSet := \Fun(\C,\Set)$. A $U\G$--set $V$ is \emph{generated in ranks $\le m$} if there is an epimorphism $P \surject V$ of $U\G$--sets where $P$ is a disjoint union 
\[ P = \coprod_{i\in I} \Hom(m_i,-)\]
of representable functors such that $m_i\le m$ for all $i\in I$.

Fix a ring $R$. We call a functor from a category $\C$ to the category of $R$--modules a \emph{$\C$--module} and write $\Cmod := \Fun(\C,\xmod{R})$. A $U\G$--module $V$ is \emph{generated in ranks $\le m$} if there is an epimorphism $P \surject V$ of $U\G$--modules where $P$ is a direct sum 
\[ P = \bigoplus_{i\in I} R\Hom(m_i,-)\]
such that $m_i\le m$ for all $i\in I$.

In both situations we call such a $P$ \emph{freely generated in ranks $\le m$}.
\end{Def}

\begin{rem}\label{rem:CEFfree}
For $U\G$--modules, a similar notion has been called ``freely generated'' by for example \cite{CEF}. The $U\G$--module $R\Hom(m,-)$ has a linear right action by $G_m$. Let $W_m$ be a not necessarily free $RG_m$--module, then
\[ R\Hom(m,-) \otimes_{RG_m} W_m\]
is a $U\G$--module. These modules are projective over $U\G$ if and only if $W_m$ is a projective $RG_m$--module. Therefore we will not call these freely generated as \cite{CEF} do in the case of $\FI$--modules.
\end{rem}

Let $V$ be a $U\G$--set or a $U\G$--module, then $V_n = V(n)$ is a sequence of $G_n$--sets or $G_n$--representations, respectively, and $\phi_n = V(\iota_1 \oplus \id_n)$ is $G_n$--equivariant.  The following lemma is a criterion when such a sequence is actually a $U\G$--set or a $U\G$--module. This criterion has been observed by Church--Ellenberg--Farb \cite[Rmk 3.3.1]{CEF} for the special case of $\FI$--modules but the argument easily generalizes as shown in \cite[Prop 4.2]{RW}.

\begin{lem}\label{lem:Cmod criterion}
Let $(V_n, \phi_n)_{n \in \mathbb N}$ be a sequence of $RG_n$--modules or $G_n$--sets (resp.) $V_n$ and $G_n$--equivariant maps $\phi_n \colon V_n \to V_{n+1}$. 

There is a unique $U\G$--module or $U\G$--set (resp.) $V$ with $V(n) = V_n$ and $V(\iota_1 \oplus \id_n)  = \phi_n$ if and only if for all $m \le n$ and  $g\in G_{n-m}$
\[(g\oplus \id_m) \circ \phi_{m,n} = \phi_{m,n},\] where $\phi_{m,n} = \phi_{n-1} \circ\dots\circ \phi_m$.
\end{lem}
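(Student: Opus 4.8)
The statement is an ``if and only if'' whose forward direction is nearly automatic and whose backward direction is the content. For the forward direction, if $V$ is a $U\G$-module (or $U\G$-set) with $V(\iota_1\oplus\id_n)=\phi_n$, then the composite $\phi_{m,n}=V(\iota_{n-m}\oplus\id_m)$ equals $V$ applied to the morphism $[\,\id_{n-m}\oplus\id_m,\,n-m\,]\in\Hom_{U\G}(m,n)$; since $(g\oplus\id_m)\circ(\iota_{n-m}\oplus\id_m)=\iota_{n-m}\oplus\id_m$ already holds in $U\G$ (the morphism $\iota_{n-m}\oplus\id_m$ is $\Fix$-ed by $G_{n-m}\oplus\id_m$, cf.\ \con{H2} or just by definition of composition in $U\G$ via $fG_\ell\circ gG_m=f(\id_\ell\oplus g)G_{\ell+m}$), applying $V$ gives the asserted identity. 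So the first, short step is just to unwind definitions.

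\textbf{The backward direction.} Assume the compatibility condition. First I would use \con{H1} to note that every morphism of $U\G$ from $m$ to $n$ has the form $gG_{n-m}\colon m\to n$ for some $g\in G_n$ (with $n\ge m$), and is zero otherwise. The plan is to \emph{define} $V$ on morphisms by
\[ V(gG_{n-m})\colon V_m \longrightarrow V_n,\qquad V(gG_{n-m}) := V(g)\circ\phi_{m,n}, \]
where $V(g)$ means the given action of $g\in G_n$ on $V_n$ (and for the $U\G$-set version, the same formula with $V(g)$ the $G_n$-action on the set $V_n$). Then there are exactly three things to check: (i) well-definedness, i.e.\ independence of the representative $g$ of the coset $gG_{n-m}$; (ii) functoriality, i.e.\ $V(fG_{n-m'}\circ gG_{m'-m}) = V(fG_{n-m'})\circ V(gG_{m'-m})$ and that identities go to identities; and (iii) that $V$ restricted to automorphisms recovers the $G_n$-actions and that $V(\iota_1\oplus\id_n)=\phi_n$. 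Step (iii) is immediate: $\iota_1\oplus\id_n$ is $\id_{n+1}\cdot G_1$ so $V(\iota_1\oplus\id_n)=\id\circ\phi_{n,n+1}=\phi_n$, and an automorphism $g\in G_n$ is $gG_0$ with $\phi_{n,n}=\id$. For (i), if $gG_{n-m}=g'G_{n-m}$ then $g'=g(h\oplus\id_m)$ for some $h\in G_{n-m}$, so $V(g')\circ\phi_{m,n}=V(g)\circ V(h\oplus\id_m)\circ\phi_{m,n}=V(g)\circ\phi_{m,n}$ by the hypothesis — this is exactly where the condition $(h\oplus\id_m)\circ\phi_{m,n}=\phi_{m,n}$ is used, and it is the crux of why the condition is necessary and sufficient. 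For (ii), using the composition rule $fG_{n-m'}\circ gG_{m'-m}=f(\id_{n-m'}\oplus g)G_{n-m}$ and the fact that $\phi_{m,n}=\phi_{m',n}\circ\phi_{m,m'}$ is $G_{m'}$-equivariant in the appropriate sense, one computes $V(f)\circ\phi_{m',n}\circ V(g)\circ\phi_{m,m'} = V(f)\circ V(\id_{n-m'}\oplus g)\circ\phi_{m',n}\circ\phi_{m,m'}$, again invoking equivariance of $\phi_{m',n}$ under $G_{m'}\hookrightarrow G_n$ via $g\mapsto \id_{n-m'}\oplus g$ — which holds because $\phi_{n-1}$ is $G_{n-1}$-equivariant and these maps stack up. Uniqueness is forced because any $U\G$-module with the prescribed $V_n$, $\phi_n$ must send $gG_{n-m}$ to $V(g)\circ\phi_{m,n}$ by functoriality.

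\textbf{Main obstacle.} The only genuinely delicate point is the bookkeeping in (ii): one must be careful that ``$\phi_{m,n}$ is $G$-equivariant'' is used in the precise form $\phi_{m',n}\circ V(g) = V(\id_{n-m'}\oplus g)\circ\phi_{m',n}$ for $g\in G_{m'}$, which follows by induction on $n-m'$ from the given $G_k$-equivariance of each $\phi_k$ together with the compatibility of the monoidal structure (the maps $G_{m'}\to G_{m'+1}\to\cdots\to G_n$, $g\mapsto\id_{\bullet}\oplus g$, commute with the $\phi$'s). I would isolate this as a one-line sub-lemma before doing (ii). Everything else is formal. As the excerpt notes, this is essentially \cite[Prop 4.2]{RW} and \cite[Rmk 3.3.1]{CEF} for $\FI$; the proof here is the same argument carried out for a general braided stability groupoid, so I would keep it brief and perhaps just indicate the key computation for (i) and (ii) and cite those references for the routine verifications.
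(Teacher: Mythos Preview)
Your proposal is correct and is exactly the standard argument. The paper does not actually supply a proof of this lemma: the sentence preceding the statement cites \cite[Rmk 3.3.1]{CEF} for the $\FI$ case and \cite[Prop 4.2]{RW} for the general case, and no proof environment follows. Your write-up (define $V(gG_{n-m})=V(g)\circ\phi_{m,n}$, check well-definedness via the hypothesis, check functoriality via the equivariance $\phi_{m',n}\circ V(g)=V(\id_{n-m'}\oplus g)\circ\phi_{m',n}$) is precisely the argument those references contain, so there is nothing to compare.
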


We want to connect the central stability homology to generation properties.

\begin{prop}\label{prop:fg}
Let $V \colon U\G \to \D$ be a functor to $\D = \Set$ or to $\D=\xmod{R}$. Then $V$ is generated in the ranks $\le d$ if and only if the map 
\[ (\Sigma V)_n \longrightarrow V_n\]
induced by $\iota_1\in \Hom(0,1)$ is surjective for all $n> d$.
\end{prop}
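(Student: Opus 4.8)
The plan is to unwind both sides in terms of the explicit description of $\Sigma$ from \autoref{prop:sigmap} and \autoref{prop:SigmaMod}, and to recognize the stated surjectivity as exactly the statement that the canonical map from a free module (or $U\G$--set) generated in ranks $\le d$ hits every element of $V$ in ranks $> d$. I will treat the $R$--module case; the $\Set$ case is formally identical with $\bigoplus$ replaced by $\coprod$ and $RG_n\otimes_{RG_{n-1}}$ replaced by $G_n\times_{G_{n-1}}$.

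First I would make the target of the map concrete. By \autoref{prop:SigmaMod} with $p=1$, we have $(\Sigma V)_n \cong RG_n\otimes_{RG_{n-1}} V_{n-1}$, and the map to $V_n$ induced by $\iota_1\in\Hom(0,1)$ sends $g\otimes v \mapsto g\cdot(\phi_{n-1}(v))$ where $\phi_{n-1}=V(\iota_1\oplus\id_{n-1})$; this is just the $G_n$--equivariant extension of $\phi_{n-1}$. Next I would establish the ``only if'' direction: if $V$ is generated in ranks $\le d$, choose an epimorphism $P=\bigoplus_{i\in I} R\Hom(m_i,-)\surject V$ with all $m_i\le d$. For $n>d\ge m_i$, every element of $\Hom(m_i,n)=G_n/G_{n-m_i}$ factors through $\iota_1\oplus\id_{n-1}\colon n-1\to n$ (since $m_i\le n-1$), so $P(\iota_1\oplus\id_{n-1})\colon P_{n-1}\to P_n$ is already surjective. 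Chasing the commuting square for $\phi$ between $P$ and $V$, surjectivity of $P_n$ over $V_n$ and of $P_{n-1}\to P_n$ forces $\phi_{n-1}\colon V_{n-1}\to V_n$, hence also its $G_n$--equivariant extension $(\Sigma V)_n\to V_n$, to be surjective.

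For the ``if'' direction I would build the generating module by hand: for each $n\le d$ pick a generating set $S_n\subseteq V_n$ of $V_n$ as an $RG_n$--module (e.g.\ all of $V_n$), and set $P=\bigoplus_{n\le d}\bigoplus_{s\in S_n} R\Hom(n,-)$, with the obvious map $P\to V$ sending the generator of $R\Hom(n,-)$ indexed by $s$ to $s\in V_n\subseteq$ (under the Yoneda correspondence, the component $R\Hom(n,m)\to V_m$ is $\varphi\mapsto V(\varphi)(s)$). This map is visibly an epimorphism in ranks $\le d$. To see it is an epimorphism in all ranks, I induct on $n$: for $n>d$ the hypothesis gives that $(\Sigma V)_n\to V_n$ is onto, i.e.\ $V_n$ is spanned over $RG_n$ by $\im\phi_{n-1}$; by induction $V_{n-1}$ is in the image of $P_{n-1}$, and $P_{n-1}\to P_n$ composed with $P_n\to V_n$ equals $\phi_{n-1}$ composed with $P_{n-1}\to V_{n-1}$, so $\im\phi_{n-1}\subseteq\im(P_n\to V_n)$; since the latter is an $RG_n$--submodule, it is all of $V_n$. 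Hence $P\surject V$ and $V$ is generated in ranks $\le d$.

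The only real subtlety — and the step I would be most careful about — is the factorization claim ``every morphism $m\to n$ in $U\G$ with $m<n$ factors through $\iota_1\oplus\id_{n-1}\colon n-1\to n$'', together with the compatible identification of the map $(\Sigma V)_n\to V_n$ with the $G_n$--equivariant extension of $\phi_{n-1}$; both are immediate from the double-coset description $\Hom(m,n)\cong G_n/G_{n-m}$ and the composition formula $fG_l\circ gG_m = f(\id_l\oplus g)G_{l+m}$ recorded before \autoref{ex:Symbraid}, but one should spell out that $\iota_1\oplus\id_{n-1} = \id_{n}G_1$ and that $\Sigma_1$ applied to a functor is precisely the Kan extension along $-\oplus 1$, which is what makes the Yoneda-style bookkeeping in the ``if'' direction come out consistently. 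Everything else is a routine diagram chase, so I would not belabor it.
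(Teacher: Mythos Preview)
Your ``if'' direction is correct and is essentially the paper's argument, phrased slightly more explicitly in terms of $RG_n$--submodules.

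Your ``only if'' direction contains a genuine error, and it is precisely the step you flagged as the only subtlety. The factorization claim ``every morphism $m\to n$ in $U\G$ with $m<n$ factors through $\iota_1\oplus\id_{n-1}$'' is false. Already for $U\mathfrak S\simeq\FI$ with $m=1$: the map $\Hom(1,n-1)\to\Hom(1,n)$ induced by $\iota_1\oplus\id_{n-1}$ is postcomposition with a fixed injection $\{1,\dots,n-1\}\hookrightarrow\{1,\dots,n\}$, and its image misses the element of $\{1,\dots,n\}$ not in that image. Consequently $P_{n-1}\to P_n$ is \emph{not} surjective, and your deduction that $\phi_{n-1}\colon V_{n-1}\to V_n$ itself is surjective is wrong (take $V=R\Hom(1,-)$, $n=2$).

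What is true, and what the paper uses, is that $(\Sigma P)_n\to P_n$ is surjective. In your language: while not every $\sigma\colon m_i\to n$ factors through $\iota_1\oplus\id_{n-1}$, every such $\sigma$ lies in the $G_n$--orbit of one that does, by transitivity \textbf{H1}. Equivalently, $(\Sigma R\Hom(m,-))_n\cong R\Hom(m\oplus 1,n)\to R\Hom(m,n)$ is the quotient $G_n/G_{n-m-1}\to G_n/G_{n-m}$, which is visibly onto. Then the square
\[
\xymatrix{
(\Sigma P)_n \ar@{->>}[r]\ar[d] & P_n \ar@{->>}[d]\\
(\Sigma V)_n \ar[r] & V_n
}
\]
gives $(\Sigma V)_n\to V_n$ surjective, since the top--right composite is onto and factors through the bottom arrow. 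So the fix is to replace your claim about $\phi_{n-1}^P$ and $\phi_{n-1}^V$ by the corresponding claim about their $G_n$--equivariant extensions; you already had all the ingredients for this in your description of $(\Sigma V)_n$.
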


\begin{proof}
We give the proof for $\D=\Set$. For $\D=\xmod{R}$ the proof is analogous after linearlizing. 

Let $p \le d < n$. Then for a representable functor 
\[  \Hom(p\oplus 1,n) \longrightarrow \Hom(p,n)\]
is given by $f \mapsto f\circ (\id_p\oplus \iota_1)$ is surjective because $G_n$ acts transitively on $\Hom(p,n)$ by postcomposition. Let there be an epimorphism 
\[ P = \coprod_{i\in I} \Hom(m_i, -) \surject V\]
such that all $m_i\le d$. The diagram
\[\xymatrix{
\Sigma P_n \ar@{->>}[r] \ar[d] & P_n \ar@{->>}[d]\\
\Sigma V_n \ar[r] & V_n
}\]
commutes because $\Sigma \to \id$ is a natural transformation between endofunctors. This implies the first implication.

Let $V$ be a functor for which
\[ (\Sigma V)_n \longrightarrow V_n\]
is surjective for all $n> d$. Let $P$ be a disjoint union
\[ P = \coprod_{i\in I} \Hom(m_i,-)\]
of representable functors with $m\le d$ together with a morphism of $U\G$--sets $P \to V$ such that
\[ P_n \surject V_n\]
is surjective for all $n\le d$. (The Yoneda Lemma lets us find such a $P$.)

Let $n >d$ and assume that $P_{n-1} \to V_{n-1}$ is surjective by induction. Then the following commutative diagram show that $P_n \to V_n$ is surjective.
\[
\begin{gathered}[b]
\xymatrix{
\displaystyle(\Sigma P)_n \cong   G_n\times_{G_{n-1}} P_{n-1} \ar@{->>}[r]\ar@{->>}[d] &\displaystyle(\Sigma V)_n \cong   G_n\times_{G_{n-1}} V_{n-1} \ar@{->>}[d]\\
P_n \ar[r] &V_n
}\\[-\dp\strutbox]
\end{gathered}\qedhere\]
\end{proof}

The previous proposition says that a $U\G$--module $V$ is generated in finite ranks if and only if $\widetilde H_{-1} V$ is stably zero, ie $\widetilde H_{-1} V_n=0$ for all $n$ large enough. We next want to generalize this concept. For this we need an additional condition.

\begin{Def}
Let $a,k\in \NN$. We define the following condition.
\begin{description}
\item[H3($N$)] $\widetilde H_i(R\Hom(0,-))_n = 0$ for all $i<N$ and all $n> k\cdot i + a$.
\end{description}
\end{Def}

\begin{rem}\label{rem:RWH3}
Note that the condition \con{LH3} in \cite[Def 2.2]{RW} implies \con{H3($\infty$)} with $a=\rk A+1$. For all of \autoref{exs:stabcat}, they prove or gather the following values from existing literature.
\begin{center}
\begin{tabular}{rp{8cm}ll}
& stability category &k&a\\\hline
(a)&$U\mathfrak S$&1&1\\
(b)&$U\GL(R)$ for a ring $R$ with stable rank $s$&2&s+1\\
(c)&$U\Sp(R)$ for a ring $R$ with unitary stable rank $s$ &2 &s+2\\
(d)&$U\AutF$&2&2\\
(e)&$U\beta$& 1&1\\
(f)&$U\Mod\Sigma$&2&2
\end{tabular}
\end{center}
\end{rem}

\begin{thm}\label{thm:res of finite type}
Assume \con{H3($N$)}. Let $V$ be a $U\G$--module and $d_0,\dots,d_N\in\ZZ$ with $d_{i+1}-d_i \ge \max(k,a)$,
then the following statements are equivalent.
\begin{enumerate}
\item There is a partial resolution
\[ P_N \to P_{N-1} \to \dots \to P_0 \to V \to 0\]
with $P_i$ that are freely generated in ranks $\le d_i$.
\item The homology
\[ \widetilde H_i(V)_n =0\]
for all $-1\le i<N$ and all $n > d_{i+1}$.
\end{enumerate}
\end{thm}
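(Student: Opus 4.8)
The proof proceeds by induction on $N$, with the case $N=0$ being essentially \autoref{prop:fg} (restated in terms of $\widetilde H_{-1}$) together with the observation that a freely generated module $P_0$ in ranks $\le d_0$ surjecting onto $V$ exists iff $\widetilde H_{-1}(V)_n = 0$ for all $n > d_0$. For the inductive step, the key is to relate the central stability homology of $V$ to that of a kernel. Given a surjection $P_0 \surject V$ with $P_0$ freely generated in ranks $\le d_0$, let $K = \ker(P_0 \to V)$; then one has a short exact sequence of $U\G$--modules $0 \to K \to P_0 \to V \to 0$. Since $K_\bullet = \Sigma^{\bullet+1}$ applied levelwise and $\Sigma$ is exact on the relevant modules (being computed by the induction formula $RG_n \otimes_{RG_{n-p}} (-)_{n-p}$ of \autoref{prop:SigmaMod}, and induction is exact), the functor $\widetilde C_*$ is exact, so we get a long exact sequence
\[ \dots \to \widetilde H_i(K)_n \to \widetilde H_i(P_0)_n \to \widetilde H_i(V)_n \to \widetilde H_{i-1}(K)_n \to \dots \]

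\textbf{Handling the free module.} The crux is to show that for a module $P_0$ freely generated in ranks $\le d_0$, the higher central stability homology $\widetilde H_i(P_0)_n$ vanishes in the appropriate range. By \autoref{prop:injFunctors} and additivity of $\widetilde C_*$ over direct sums, it suffices to treat a single representable $R\Hom(m,-)$ with $m \le d_0$; and by the well-known identity $\Sigma_X \Hom_\C(A,-) \cong \Hom_\C(A\oplus X,-)$, the complex $\widetilde C_*(R\Hom(m,-))_n$ is the $R$-linearized augmented chain complex of the semisimplicial set $(K_\bullet \Hom(m,-))_n$, which by \autoref{prop:injFunctors} splits as a disjoint union of the subcomplexes $L^x_\bullet$ of $(K_\bullet\Hom(0,-))_n$. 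A shifting/comparison argument — identifying each $L^x_\bullet$ with the analogous semisimplicial set for $\Hom(0,-)$ at rank $n - m$, as in the splitting complexes of \cite{RW} — reduces the vanishing of $\widetilde H_i(R\Hom(m,-))_n$ to \con{H3($N$)} applied at $n - m$. This is where the hypothesis $d_{i+1} - d_i \ge \max(k,a)$ is consumed: it guarantees that $n > d_{i+1}$ forces $n - m > d_{i+1} - d_0 \ge k\cdot i + a$ (after summing the gaps), so \con{H3($N$)} gives $\widetilde H_i(R\Hom(m,-))_n = 0$.

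\textbf{Assembling the induction.} With $\widetilde H_i(P_0)_n = 0$ in range, the long exact sequence gives $\widetilde H_i(V)_n \cong \widetilde H_{i-1}(K)_n$ for $i \ge 1$ in the overlap of the two ranges, and $\widetilde H_0(V)_n$ sits in an exact sequence with $\widetilde H_{-1}(K)_n$. So condition (b) for $V$ with thresholds $d_0, \dots, d_N$ translates, via this shift, into condition (b) for $K$ with thresholds $d_1, \dots, d_N$ (one index lower, $N-1$ steps); conversely a partial resolution of $K$ of length $N-1$ with ranks $\le d_1, \dots, \le d_N$ splices onto $P_0 \surject V$ to give one of length $N$ for $V$. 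Applying the inductive hypothesis to $K$ closes the loop in both directions. The main obstacle I anticipate is the bookkeeping in the free-module step: carefully identifying $L^x_\bullet$ with a shifted copy of the $\Hom(0,-)$ semisimplicial set and tracking how the augmentation degree $-1$ and the rank shift by $m$ interact with the arithmetic constraint $d_{i+1} - d_i \ge \max(k,a)$, so that the inequalities line up exactly. The exactness of $\Sigma$ and the splitting are clean; it is the range arithmetic that requires care.
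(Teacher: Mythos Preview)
Your proposal is correct and follows essentially the same route as the paper. The paper first isolates the free-module computation as a separate proposition (\autoref{prop:homology of free}), identifying each $L^\tau_\bullet$ with $\Lk^R\tau \cong K_\bullet\Hom(0,-)_{n-m}$ exactly as you sketch, and then in the proof of the theorem it peels off all the $P_i$ at once, chaining the injections/surjections $\widetilde H_j(K_i)_n \hookrightarrow \widetilde H_{j-1}(K_{i+1})_n$ (resp.\ $\twoheadrightarrow$) through the successive kernels $K_i = \ker(P_i\to P_{i-1})$; this is just your induction on $N$ unrolled, so the arguments are the same up to bookkeeping.
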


To prove this theorem we first compute the central stability homology of free $U\G$--modules. This part is quite technical and can easily be skipped upon the first read. Assuming  \autoref{prop:homology of free}, the \hyperlink{proof:thm:res of finite type}{proof} of  \autoref{thm:res of finite type} is straight forward.

To compute the central stability homology of free $U\G$--modules, we investigate $K_\bullet \Hom(m,-)$. We first introduce the concept of joins and links for augmented semisimplicial sets. We say $\rho$ is a \emph{join} $\tau * \sigma$ of the $p$--simplex $\tau$ and the $q$--simplex $\sigma$ in a semisimplicial object if
\[ \partial_0 \cdots \partial_p \rho = \sigma \quad\text{and}\quad  \partial_{p+1} \cdots \partial_{p+q+1} \rho = \tau\]
and let the left- and right-sided link be
\[ \Lk^L \sigma = \{ \tau \mid \exists \text{ a join }\tau*\sigma\} \quad\text{and}\quad \Lk^R \tau = \{ \sigma \mid \exists \text{ a join }\tau*\sigma\}.\]

In $K_\bullet \Hom(0,-)_n$ the $p$--simplices are morphisms from $ p+1 \to n$. And for $\rho \in K_{p+q+1}\Hom(0,-)_n$
\[ \partial_0 \cdots \partial_p \rho = \rho \circ (\iota_{ p+1} \oplus \id_{q+1})\quad\text{and}\quad  \partial_{p+1} \cdots \partial_{p+q+1} \rho = \rho   \circ ({\id_{p+1}} \oplus \iota_{q+1}).\]
Condition \eqref{item:complinj} of \autoref{Def:weakly complemented category} implies that $\tau*\sigma$ is unique if it exists. More generally we write $\tau*\sigma= \rho \colon l\oplus m \to n$ for $\tau\colon l \to n$ and $\sigma\colon m \to n$ if
\[  \rho\circ (\iota_l\oplus \id_m) = \sigma \quad\text{and}\quad  \rho\circ(\id_l \oplus \iota_m) = \tau.\]

\begin{lem}\label{lem:link}
Let $fG_{n-m}=\sigma\colon m \to n$, then
\[ \Lk^L \sigma = \{ \tau\colon l\to n \mid \exists \tau' \colon l\to n-m\text{ such that } \tau = f\circ (\id_{n-m} \oplus \iota_m)\circ \tau'\}\cong K_\bullet\Hom(0,-)_{n-m}\]
and
\[ \Lk^R \sigma = \{ \tau\colon l\to n \mid \exists \tau' \colon l\to n-m\text{ such that } \tau = f\circ b_{m,n-m}\circ (\iota_m \oplus \id_{n-m})\circ \tau'\}\cong K_\bullet\Hom(0,-)_{n-m}.\]
\end{lem}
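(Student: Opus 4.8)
The plan is to establish both isomorphisms by exhibiting explicit bijections with $K_\bullet\Hom(0,-)_{n-m}$ and checking they respect the face maps; the two cases are completely parallel via the braiding, so I will focus on $\Lk^L\sigma$.

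First I would unwind the definitions. Fix a representative $fG_{n-m}=\sigma\colon m\to n$, so $f\in G_n$ and $\sigma = fG_{n-m}$. A $p$--simplex of $K_\bullet\Hom(0,-)_n$ is a morphism $\tau\colon p+1\to n$, and by the description of joins recalled just before the lemma, $\tau$ lies in $\Lk^L\sigma$ iff there exists a (necessarily unique, by condition~\eqref{item:complinj} of \autoref{Def:weakly complemented category}) join $\tau*\sigma\colon (p+1)\oplus m\to n$, i.e.\ a morphism $\rho$ with $\rho\circ(\iota_{p+1}\oplus\id_m)=\sigma$ and $\rho\circ(\id_{p+1}\oplus\iota_m)=\tau$. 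The key observation is that the first equation pins down $\rho$ up to the ambiguity measured by \con{H1}/\con{H2}: since both $\rho$ and the canonical map $f\circ(b_{m,n-m}\oplus\id_{\text{nothing}})\dots$ restrict along $\iota_{p+1}\oplus\id_m$ to $\sigma$, \autoref{prop:Hominj} (in the form needed, using \autoref{rem:symH2}) forces $\rho = f\circ(\id_{n-m}\oplus\iota_m\text{-type filler})$ up to an element of $G_{n-m}$ acting on the complement. Concretely: $\rho$ has the form $f\circ(\id_{n-m}\oplus\iota_m)\circ\tau'$ composed appropriately, with $\tau'\colon p+1\to n-m$, and $\tau=\rho\circ(\id_{p+1}\oplus\iota_m) = f\circ(\id_{n-m}\oplus\iota_m)\circ\tau'$. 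This gives the asserted set-level description of $\Lk^L\sigma$, and sending $\tau\mapsto\tau'$ defines the candidate bijection to $K_p\Hom(0,-)_{n-m} = \Hom(p+1,n-m)$; its inverse is $\tau'\mapsto f\circ(\id_{n-m}\oplus\iota_m)\circ\tau'$, which is injective because $f$ is an isomorphism and $\id_{n-m}\oplus\iota_m$ is a monomorphism in the weakly complemented category $U\G$.

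Next I would check this bijection is simplicial, i.e.\ commutes with the face maps $\partial_i$. A face map on $K_\bullet\Hom(0,-)$ is precomposition with $d_i = \id_i\oplus\iota_1\oplus\id_{p-i}$; since $f\circ(\id_{n-m}\oplus\iota_m)\circ(-)$ is postcomposition, it commutes with precomposition by $d_i$ on the nose, so $\tau\mapsto\tau'$ is a map of semisimplicial sets and hence an isomorphism. For $\Lk^R\sigma$ the argument is identical after conjugating by the braiding $b_{m,n-m}$: the relevant restriction equation is now $\rho\circ(\id_m\oplus\iota_{n-m}\text{-side})$, and one uses the symmetric form of \autoref{prop:Hominj} noted parenthetically in its statement, which is exactly what \autoref{rem:symH2} supplies in the prebraided setting. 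The resulting formula is $\tau = f\circ b_{m,n-m}\circ(\iota_m\oplus\id_{n-m})\circ\tau'$, matching the statement.

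The main obstacle I anticipate is bookkeeping the braiding and the precise coset representatives: one must verify that the description of $\Lk^L\sigma$ (and $\Lk^R\sigma$) is independent of the chosen representative $f$ of $\sigma = fG_{n-m}$, and that the two candidate joins actually satisfy \emph{both} defining equations $\rho\circ(\iota\oplus\id)=\sigma$ and $\rho\circ(\id\oplus\iota)=\tau$ rather than just one. Independence of $f$ follows because replacing $f$ by $f(g\oplus\id_m)$ with $g\in G_{n-m}$ reparametrizes $\tau'$ by $g$ without changing the set of $\tau$'s; and the uniqueness of joins (condition~\eqref{item:complinj}) guarantees that once one equation and the correct ``shape'' are known, the other equation is automatic. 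Everything else is a routine diagram chase in $U\G$ using that morphisms are monomorphisms and that $\iota_X$ behaves functorially.
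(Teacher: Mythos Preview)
Your approach is essentially the same as the paper's: exhibit the join explicitly in one direction, and for the converse apply \autoref{prop:Hominj} (and its symmetric version from \autoref{rem:symH2}) to $f^{-1}\circ\rho$ to force $\rho=f\circ(\tau'\oplus\id_m)$, whence $\tau=f\circ(\id_{n-m}\oplus\iota_m)\circ\tau'$. The paper's proof is just a terse version of exactly this; your additional checks (simpliciality of $\tau\mapsto\tau'$ and independence of the representative $f$) are correct and worth including, though the paper leaves them implicit. The only cosmetic issue is that in your sketch the sentence ``$\rho$ has the form $f\circ(\id_{n-m}\oplus\iota_m)\circ\tau'$'' mislabels $\rho$ (that formula is $\tau$, not $\rho$; the join itself is $\rho=f\circ(\tau'\oplus\id_m)$), but you recover the right expression for $\tau$ in the next clause.
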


\begin{proof}
Assume
\[  \tau = f\circ (\tau' \oplus \iota_m)\]
then
\[ \rho = f\circ (\tau'\oplus \id_m) = \tau*\sigma.\]
Similarly if
\[\tau = f\circ b_{m,n-m}\circ (\iota_m \oplus  \tau')\]
then
\[ \rho = f\circ b_{m,n-m}\circ (\iota_m \oplus \tau') = \tau*\sigma.\]

For the opposite direction apply  \autoref{prop:Hominj} to
\[ f^{-1} \circ \rho \colon l \oplus m \longrightarrow (n-m) \oplus m\]
and
\[ f^{-1} \circ b_{m,n-m}^{-1}\circ \rho \colon m \oplus l \longrightarrow m \oplus (n-m).\qedhere\]
\end{proof}

\begin{cor}
Let $\sigma \in K_p\Hom(0,-)_n$, then both
\[ \Lk^L \sigma \cong \Lk^R \sigma \cong K_\bullet \Hom(0,-)_{n-p-1}.\]
\end{cor}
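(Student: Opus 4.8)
The plan is to deduce this directly from \autoref{lem:link}, with essentially no extra work. Recall that a $p$-simplex $\sigma$ of the augmented semisimplicial set $K_\bullet\Hom(0,-)_n$ is exactly a morphism $\sigma\colon p+1\to n$ in $U\G$. So I would apply \autoref{lem:link} with $m$ set equal to $p+1$: choosing a representative $f\in G_n$ with $\sigma=fG_{n-p-1}$, the lemma gives isomorphisms of augmented semisimplicial sets
\[ \Lk^L\sigma\;\cong\;K_\bullet\Hom(0,-)_{n-(p+1)}\;=\;K_\bullet\Hom(0,-)_{n-p-1}\;\cong\;\Lk^R\sigma,\]
realized by $\tau'\mapsto f\circ(\id_{n-p-1}\oplus\iota_{p+1})\circ\tau'$ on the left and by $\tau'\mapsto f\circ b_{p+1,n-p-1}\circ(\iota_{p+1}\oplus\id_{n-p-1})\circ\tau'$ on the right, with inverses supplied by \autoref{prop:Hominj}. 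Composing these two isomorphisms yields $\Lk^L\sigma\cong\Lk^R\sigma$, which is the remaining assertion.

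The only point worth remarking is that the displayed bijections are in fact maps of augmented semisimplicial sets, i.e. they intertwine the face operators $\partial_i$; but this is already part of the content of \autoref{lem:link}, since on simplices the identifications are given by postcomposition with a fixed morphism (after transporting along the isomorphism of \autoref{prop:Hominj}), and postcomposition commutes with the precomposition by the cofaces $\id_i\oplus\iota_1\oplus\id_{q-i}$ that define $\partial_i$. Hence I expect no real obstacle: the corollary is just \autoref{lem:link} specialized to $m=p+1$, together with the bookkeeping that a $p$-simplex of $K_\bullet\Hom(0,-)$ corresponds to a morphism out of the object $p+1$.
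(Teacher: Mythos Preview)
Your proposal is correct and is exactly what the paper intends: the corollary is stated without proof, as an immediate specialization of \autoref{lem:link} to the case $m=p+1$, using that a $p$-simplex of $K_\bullet\Hom(0,-)_n$ is precisely a morphism $p+1\to n$. Your additional remark about compatibility with the face maps is a reasonable clarification, though the paper treats this as already implicit in the $K_\bullet$ notation of the lemma.
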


\begin{prop}\label{prop:homology of free}
Assume \con{H3($N$)}. Then
\[ \widetilde H_i(R\Hom(m,-))_n = 0 \]
for all $i<N$ and all $n > k\cdot i +a +m$.
\end{prop}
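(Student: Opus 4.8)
The plan is to compute the central stability homology of the free module $R\Hom(m,-)$ by reducing to the already-assumed case $m=0$, i.e.\ \con{H3($N$)}, via the link structure established in \autoref{lem:link} and its corollary. The key observation is that $K_\bullet\Hom(m,-)$ is not itself of the form $K_\bullet\Hom(0,-)_k$, but its pieces are assembled out of such complexes: by \autoref{prop:injFunctors} (applied to the monomorphism-preserving functor $F=\Hom(m,-)$), the semisimplicial set $(K_\bullet\Hom(m,-))_n$ splits as a disjoint union $\coprod_{x\in\Hom(m,n)} L^x_\bullet$ of subcomplexes of $(K_\bullet\Hom(0,-))_n$, where $L^x_p$ consists of those $\sigma\colon p+1\to n$ whose complement $\sigma'$ satisfies $x\in\im F(\sigma')$, that is, $x$ factors through the complement of $\sigma$. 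Since $G_n$ acts transitively on $\Hom(m,n)$ and commutes with the augmented simplicial structure, all the $L^x_\bullet$ are isomorphic, so it suffices to understand one of them, say for $x$ the ``standard'' inclusion $\iota_{n-m}\oplus\id_m$ (or rather its class), and then $\widetilde H_i(R\Hom(m,-))_n\cong RG_n\otimes_{R\Stab(x)}\widetilde H_i(L^x_\bullet;R)$, with $\Stab(x)\cong G_{n-m}$.

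The second step is to identify $L^x_\bullet$ as a familiar complex. A $p$-simplex $\sigma\colon p+1\to n$ lies in $L^x_\bullet$ exactly when $x$ factors through a complement of $\sigma$; writing $\sigma=fG_{n-p-1}$, the complement is $f\circ(\id_{n-p-1}\oplus\iota_{p+1})\colon n-p-1\to n$, and requiring $x$ to factor through this is, by the weak-complementation axiom \eqref{item:complinj}, the same as requiring a join decomposition: $\sigma$ together with $x$ spans a simplex, i.e.\ $\sigma\in\Lk^R x$ (or $\Lk^L x$, up to the braiding). By \autoref{lem:link} and its corollary, $\Lk^R x\cong K_\bullet\Hom(0,-)_{n-m}$. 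The subtlety is that $L^x_\bullet$ is the set of simplices that can be joined with $x$, \emph{not} joined on a fixed side; but $x$ is a single $(m-1)$-simplex and the link of a simplex in an augmented semisimplicial set is again an augmented semisimplicial set of the expected dimension shift, so $L^x_\bullet\cong K_{\bullet}\Hom(0,-)_{n-m}$ (with a degree shift by $m$ coming from the $m$ vertices of $x$). Concretely I expect $L^x_p\cong (K_{p-m+?}\ldots)$ — the bookkeeping is: a $p$-simplex of $(K_\bullet\Hom(m,-))_n$ is a map $p+1\to n$, which under the splitting corresponds via $K_p\Hom(m,-)_n\cong G_n\times_{G_{n-p-1}}\Hom(m,n-p-1)$ to an honest simplex in the smaller complex, so that $\widetilde H_i(R\Hom(m,-))_n$ is a sum of copies of $\widetilde H_i(R\Hom(0,-))_{n-m}$, each summand being $RG_n\otimes_{RG_{n-m}}\widetilde H_i(R\Hom(0,-))_{n-m}$.

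Granting that identification, the conclusion is immediate: \con{H3($N$)} gives $\widetilde H_i(R\Hom(0,-))_{n-m}=0$ for $i<N$ whenever $n-m>k\cdot i+a$, i.e.\ whenever $n>k\cdot i+a+m$, which is exactly the asserted range. Since induction of the zero module is zero, $RG_n\otimes_{RG_{n-m}}\widetilde H_i(R\Hom(0,-))_{n-m}=0$ in that range, and hence $\widetilde H_i(R\Hom(m,-))_n=0$. (One should be slightly careful that the homology of the $R$-linearized complex $\widetilde C_*^{}(R\Hom(m,-))_n$ really is the direct sum over $x\in\Hom(m,n)$ of the reduced homologies of the $L^x_\bullet$, but that follows from \autoref{prop:injFunctors} since a disjoint union of semisimplicial sets linearizes to a direct sum of chain complexes, and the functor $R(-)$ is exact.)

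The main obstacle I anticipate is the precise bookkeeping in step two: getting the degree shift right and checking that $L^x_\bullet$ really is isomorphic — as an augmented semisimplicial set, with the correct face maps — to $K_\bullet\Hom(0,-)_{n-m}$ rather than to some truncation or variant of it. In particular one must verify that the face maps of $(K_\bullet\Hom(m,-))_n$ restricted to the subcomplex $L^x_\bullet$ match, under the identification of a simplex with its ``$x$-free part'', the face maps of $K_\bullet\Hom(0,-)_{n-m}$; this uses the explicit description of face maps from \autoref{prop:sigmap} together with \autoref{lem:link}. The braiding $b_{m,n-m}$ appearing in the description of $\Lk^R$ is a red herring for the homotopy type — it only reshuffles which complement is canonical — but it must be tracked to make the $G_n$-action on the sum of copies of $\widetilde H_i(R\Hom(0,-))_{n-m}$ come out as an honest induction $\Ind_{G_{n-m}}^{G_n}$. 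Everything else is formal.
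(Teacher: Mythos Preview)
Your approach is essentially the paper's: apply \autoref{prop:injFunctors} to split $(K_\bullet\Hom(m,-))_n$ into $\coprod_{\tau}L^\tau_\bullet$, observe that $\tau$ factoring through the complement $\sigma'$ means precisely $\tau\in\Lk^L\sigma$, so $L^\tau_\bullet=\Lk^R\tau\cong K_\bullet\Hom(0,-)_{n-m}$ by \autoref{lem:link}, and conclude from \con{H3($N$)}.

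One clarification: your worry about a ``degree shift by $m$'' is misplaced, and the speculation $L^x_p\cong K_{p-m+?}(\ldots)$ is wrong. There is no shift. A $p$--simplex of $L^\tau_\bullet$ is a morphism $\sigma\colon p{+}1\to n$ lying in $\Lk^R\tau$; under \autoref{lem:link} it corresponds to $\tau'\colon p{+}1\to n{-}m$, which is a $p$--simplex of $K_\bullet\Hom(0,-)_{n-m}$. The isomorphism is degree-preserving and compatible with face maps (precomposition by $d_i$ on $\sigma$ becomes precomposition by $d_i$ on $\tau'$), so $\widetilde H_i(RL^\tau_\bullet)\cong\widetilde H_i(R\Hom(0,-))_{n-m}$ with the same index $i$. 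Your eventual conclusion is correct, but you should drop the parenthetical about a shift and the $K_{p-m+?}$ guess. The extra remarks about the transitive $G_n$--action and the induced-module structure are true but unnecessary for the vanishing statement; the paper simply linearizes the disjoint union and applies \con{H3($N$)} to each summand.
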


\begin{proof}
From  \autoref{prop:injFunctors} we know that $K_p \Hom({ m},-)_n$ is isomorphic to
\[  \coprod_{\tau \in \Hom(m,n)} L_p^\tau\]
with
\[ L_p^\tau = \{ \sigma \in K_p\Hom(0,-)_n\mid \tau \in \im \Hom(m,-)(\sigma')\}\]
where $\sigma' = f\circ (\id_{n-p-1}\oplus \iota_{p+1})$ is a complement of $\sigma = fG_{n-p-1}$. Note that
\[ \im \Hom(m,-)(\sigma') \subset \Hom(m,n),\]
that is the set of those morphisms that factor through $\sigma'$, is independent of the choice of $f$. Because 
\[ \tau \in \im\Hom(m,-)(\sigma') \iff \exists \tau'\colon m \to n-p-1\text{ such that } \tau = \sigma'\circ \tau',\]
   \autoref{lem:link} gives that $\tau \in \Lk^L\sigma$, hence
 \[ L_\bullet^\tau = \Lk^R\tau \cong K_\bullet\Hom(0,-)_{n-m}.\]
This means that for every $U\G$--module $P$ freely generated in ranks $\le m$, $\widetilde H_i(P)_n = 0$ for all $-1\le i< N$ and $n>k\cdot i +a +m$.
\end{proof}

This proposition suffices to prove \autoref{thm:res of finite type}. First we will derive a corollary that the modules described in \autoref{rem:CEFfree} have vanishing central stability homology in the same range as the freely generated $U\G$--module $R\Hom(m,-)$. Note that in \autoref{thm:res of finite type}  (a) implies (b) for any partial resolution with vanishing central stability homology in the same ranges as the $P_i$ that are freely generated in ranks $\le d_i$.

\begin{cor}
Assume \con{H3($N$)}. Let $W_m$ be an $RG_m$--module. Then
\[ \widetilde H_i(R\Hom(m,-)\otimes_{RG_m}W_m)_n = 0 \]
for all $i<N$ and all $n > k\cdot i +a +m$.
\end{cor}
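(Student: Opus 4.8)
The plan is to reduce the statement to the already-proved \autoref{prop:homology of free} by exploiting the fact that $\widetilde H_i$ is an additive functor which commutes with filtered colimits, and that $R\Hom(m,-)\otimes_{RG_m} W_m$ can be built out of copies of $R\Hom(m,-)$ in a way that is compatible with these operations.

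First I would observe that the functor $\Sigma$, being a left adjoint (the left Kan extension along $-\oplus 1$), preserves all colimits; consequently each $\widetilde C_p(-) = \Sigma^{p+1}(-)$ preserves colimits, the chain complex $\widetilde C_*(-)$ is an exact functor on the abelian category of $U\G$--modules in each degree, and hence $\widetilde H_i(-)$ commutes with filtered colimits and with arbitrary direct sums. In particular, for a free $RG_m$--module $W_m \cong \bigoplus_{j\in J} RG_m$ we get
\[ R\Hom(m,-)\otimes_{RG_m} W_m \cong \bigoplus_{j\in J} R\Hom(m,-),\]
so $\widetilde H_i$ of it is $\bigoplus_{j\in J}\widetilde H_i(R\Hom(m,-))_n$, which vanishes for $i<N$ and $n>k\cdot i+a+m$ by \autoref{prop:homology of free}. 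This settles the case of free $RG_m$--modules.

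For a general $RG_m$--module $W_m$, I would pick a free presentation $\bigoplus_{j'\in J'} RG_m \to \bigoplus_{j\in J} RG_m \to W_m \to 0$ and tensor with the flat right $RG_m$--module... well, $R\Hom(m,-)$ need not be flat, but it does not matter: the point is that $-\otimes_{RG_m} W_m$ applied to $R\Hom(m,-)$ is more cleanly handled by writing $W_m$ as a filtered colimit of finitely generated free $RG_m$--modules only when $RG_m$ is suitably coherent, which we do not want to assume. Instead, the cleanest route is: tensoring is right exact, and $R\Hom(m,-)$ is, as a right $RG_m$--module in each object $n$, \emph{free} (it is $RG_n$ as a right $RG_m$--module, which is free since $G_m\hookrightarrow G_n$ so $RG_n$ is a free $RG_m$--module on the coset representatives); hence $R\Hom(m,-)\otimes_{RG_m}-$ is an exact functor from $RG_m$--modules to $U\G$--modules. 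Therefore, choosing a free resolution $F_\bullet \to W_m$ of $RG_m$--modules, $R\Hom(m,-)\otimes_{RG_m} F_\bullet$ is a resolution of $R\Hom(m,-)\otimes_{RG_m} W_m$ by $U\G$--modules each of which is a direct sum of copies of $R\Hom(m,-)$.

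Now I would run the hyperhomology/double-complex argument: apply $\widetilde C_*$ to this resolution to get a first-quadrant double complex; by the previous paragraph $\widetilde C_*$ is exact in each degree on $U\G$--modules, so the homology of the total complex agrees (via the spectral sequence that collapses because of this exactness) with $\widetilde H_*(R\Hom(m,-)\otimes_{RG_m}W_m)$; running the other spectral sequence first computes, in each column, $\widetilde H_i$ of a direct sum of copies of $R\Hom(m,-)$, which by \autoref{prop:homology of free} vanishes for $i<N$ and $n>k\cdot i+a+m$. Since the bad range only grows with the homological degree within a single $\widetilde C_p$ and the resolution direction does not shift $n$, a diagonal count shows $\widetilde H_i(R\Hom(m,-)\otimes_{RG_m}W_m)_n=0$ for $i<N$ and $n>k\cdot i+a+m$, as claimed. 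The main obstacle is the bookkeeping in this last spectral-sequence/double-complex step — making sure the exactness of $R\Hom(m,-)\otimes_{RG_m}-$ and of the functors $\Sigma^{p+1}$ are correctly combined so that no extra shift in $n$ or in $i$ creeps in — but conceptually it is routine once one notes that $R\Hom(m,-)$ is levelwise free over $RG_m$ and $\Sigma$ preserves colimits.
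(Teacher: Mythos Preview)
Your approach is essentially the same as the paper's: both take a free (or projective) resolution $F_\bullet\to W_m$, form the double complex $\widetilde C_*(R\Hom(m,-))_n\otimes_{RG_m}F_\bullet \cong \widetilde C_*(R\Hom(m,-)\otimes_{RG_m}F_\bullet)_n$, and compare the two spectral sequences, one of which collapses because $\widetilde C_p(R\Hom(m,-))_n$ is free over $RG_m$ while the other has vanishing $E^1$ in the desired range by \autoref{prop:homology of free}.

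One small correction: $R\Hom(m,n)$ is not $RG_n$ but $R[G_n/G_{n-m}]$; the freeness over $RG_m$ you need comes instead from the fact that $G_m$ (embedded on the right) acts freely on $G_n/G_{n-m}$, which is a consequence of axiom~(c) in \autoref{Def:stability groupoid}. With that fix your argument is complete.
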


\begin{proof}
Let 
\[ Q_N \to Q_{N-1} \to \dots \to Q_0 \to W_m\]
be a projective resolution of $W_m$ by $RG_m$--modules. We consider the complex
\[ \widetilde C_*(R\Hom(m,-) \otimes_{RG_m} Q_*)_n \cong \widetilde C_*(R\Hom(m,-))_n \otimes_{RG_m}Q_*\]
and its two spectral sequences. The first spectral sequence
\[ E^0_{pq}=  \widetilde C_p(R\Hom(m,-))_n \otimes_{RG_m} Q_q\]
is given by
\[ E^2_{pq} = E^{\infty}_{pq} \cong \begin{cases}  \widetilde H_p\bigg(R\Hom(m,-) \otimes_{RG_m} W_m\bigg)_n & q =0\\ 0 &q>0\end{cases}\]
because 
\[ \widetilde C_p(R\Hom(m,-))_n \cong \Ind^{G_n}_{G_{n-m-p}} R\]
is a free $RG_m$--module.

The second spectral sequence
\[ E^0_{pq}=  \widetilde C_q(R\Hom(m,-))_n \otimes_{RG_m} Q_p,\]
that converges to the same limit, computes to
\[ E^1_{pq} \cong \widetilde H_q(R\Hom(m,-))_n \otimes_{RG_m} Q_p.\]
Therefore the central stability homology of $R\Hom(m,-)\otimes_{RG_m} W_m$ vanishes in the same range as $R\Hom(m,-)$.
\end{proof}

\begin{proof}[Proof of  \autoref{thm:res of finite type}]
\hypertarget{proof:thm:res of finite type}Assume  (a). Let $K_{-1}=P_{-1}:=V$ and let
\[ K_i := \ker( P_i \to P_{i-1})\]
for $i\ge 0$. Then the long exact sequence of $\widetilde H_*$ for the short exact sequence
\[ 0\longrightarrow K_i \longrightarrow P_i \longrightarrow K_{i-1} \longrightarrow0\]
implies that
\[ \widetilde H_j(K_i)_n \inject  \widetilde H_{j-1}(K_{i+1})_n\]
is injective for all $i,j\le N-1$ and all $n>kj+a+d_{i+1}$. Because $P_{i+1} \surject K_i$ for all $i\le N-1$, we know from  \autoref{prop:fg} that
\[ \widetilde H_{-1}(K_i)_n =0 \]
for all $n>d_{i+1}$. Thus
\[ \widetilde H_i(V)_n \inject H_{i-1}(K_0)_n \inject \cdots \inject \widetilde H_{-1}(K_i)_n = 0 \]
is zero for all $i\le N-1$ and $n>d_{i+1}$ because
\[ d_{i+1}-d_j \ge (i+1-j)\max(k,a) \ge k(i-j) + a\]
for $j=0,\dots,i+1$.

Now assume (b). Let by induction
\[ P_{N-1} \to \dots \to P_0 \to V \to 0\]
be a partial resolution such that $P_i$ is freely generated in  ranks $\le d_i$ for all $i\le N-1$. Let as before $K_{-1}=P_{-1}:=V$ and let
\[ K_i := \ker( P_i \to P_{i-1})\]
for $i\ge 0$. We need to prove that $K_{N-1}$ is generated in ranks $d_N$. Similar as before
\[ \widetilde H_j(K_i)_n \surject  \widetilde H_{j-1}(K_{i+1})_n\]
is surjective for all $i,j\le N-1$ and all $n>k(j-1)+a+d_{i+1}$.
Thus
\[ 0=  \widetilde H_{N-1}(V)_n  \surject \widetilde H_{N-2}(K_{0})_n \surject \cdots \surject \widetilde H_{-1}(K_{N-1})_n \]
for all 
\[n>d_{N} \ge k(N-2-j)+a+d_j.\qedhere\]
\end{proof}

\begin{Def}
We call a $U\G$--modules $V$ \emph{stably acyclic} if $\widetilde H_i(V)_n = 0$ for all $i\ge -1$ and all $n$ large enough.
\end{Def}

We want to conclude this section by giving an example that is by definition stably acyclic, but the existence of the free resolution as in \autoref{thm:res of finite type} is not \textit{a priori} clear. More generally, in \autoref{sec:poly} we find a condition \con{H4} on $U\G$ such that all $U\G$--modules with finite polynomial degree (see \autoref{Def:polynomial degree}) are stably acyclic.

\begin{cor}
Assume \con{H3($N$)}. Let $V$ be a $U\G$--module such that $V_n = 0$ for all $n>d$. Then there is a resolution
\[ P_N \to P_{N-1} \to \dots \to P_0 \to V \to 0\]
with $P_i$ that are freely generated in ranks $\le \max(k,a)\cdot i +d$.
\end{cor}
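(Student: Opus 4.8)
The plan is to deduce this from \autoref{thm:res of finite type}: I need to check that a $U\G$--module supported in ranks $\le d$ automatically satisfies condition (b) of that theorem for the bounds $d_i = \max(k,a)\cdot i + d$. The key computation is the explicit shape of the central stability chain complex recalled in \autoref{sec:central stability and stab cats} (via \autoref{prop:SigmaMod}): in simplicial degree $p$ one has
\[ \widetilde C_p(V)_n \;\cong\; RG_n \otimes_{RG_{n-p-1}} V_{n-p-1} \;\cong\; \Ind_{G_{n-p-1}}^{G_n} V_{n-p-1}, \]
with $\widetilde C_{-1}(V)_n = V_n$. Since $V_m = 0$ for $m > d$ by hypothesis, this term vanishes whenever $n - p - 1 > d$, i.e.\ whenever $p < n - 1 - d$. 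Hence, for each fixed $n$, the complex $\widetilde C_*(V)_n$ is concentrated in simplicial degrees $p \ge n - 1 - d$, and therefore $\widetilde H_i(V)_n = 0$ for every $i < n - 1 - d$, equivalently for every $n > i + 1 + d$. This already includes the augmentation degree $i = -1$ and uses nothing about $\G$ beyond its being a stability category.

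Next I would set $d_i := \max(k,a)\cdot i + d$ for $-1 \le i \le N$. Then $d_{i+1} - d_i = \max(k,a)$, so the gap hypothesis $d_{i+1}-d_i \ge \max(k,a)$ of \autoref{thm:res of finite type} holds with equality. Moreover, because $\max(k,a) \ge 1$, we have $d_{i+1} = \max(k,a)(i+1) + d \ge (i+1) + d$, so the vanishing from the previous step yields $\widetilde H_i(V)_n = 0$ for all $-1 \le i < N$ and all $n > d_{i+1}$. That is exactly condition (b) of \autoref{thm:res of finite type}. Invoking that theorem (with the standing assumption \con{H3($N$)}) produces a partial resolution
\[ P_N \to P_{N-1} \to \dots \to P_0 \to V \to 0 \]
with each $P_i$ freely generated in ranks $\le d_i = \max(k,a)\cdot i + d$, which is the assertion.

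I do not expect a genuine obstacle: all the real work is packaged inside \autoref{thm:res of finite type} (and \autoref{prop:homology of free} behind it), and the only thing one proves by hand is the elementary "support in ranks $\le d$ forces $\widetilde C_*(V)_n$ to live in high simplicial degree, hence $\widetilde H_i(V)_n$ vanishes for $n > i+1+d$" observation, which is immediate from the displayed formula for $\widetilde C_p(V)_n$. The one point to handle with a little care is the bookkeeping of inequalities — confirming that $\max(k,a) \ge 1$ so that $n > d_{i+1}$ indeed implies $n > i+1+d$, and that the chosen $d_i$ meet the gap condition — but this is routine once the $d_i$ are defined as above.
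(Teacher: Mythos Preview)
Your proposal is correct and matches the paper's intended argument: the corollary is stated without proof precisely because it is an immediate application of \autoref{thm:res of finite type}, and you supply exactly the missing verification---namely that $\widetilde C_p(V)_n \cong \Ind_{G_{n-p-1}}^{G_n} V_{n-p-1}$ vanishes once $n-p-1>d$, so condition~(b) holds with $d_i=\max(k,a)\cdot i+d$. Your observation that one needs $\max(k,a)\ge 1$ for the bookkeeping is accurate; this is implicit in all the paper's examples (see \autoref{rem:RWH3}) though not stated in the definition of \con{H3($N$)}.
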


\section{Notions of central stability}\label{sec:notions}

Different notions of central stability have been used in the past. We want to clear this up for stability categories. In the following proposition we prove the equivalence of four conditions. The condition \eqref{item:presdeg} is called ``presented in degree $\le d$'' in \cite{CEF,CE}. In \cite{PS}, condition \eqref{item:Lan} is called ``central stability''. And conditions \eqref{item:Lancolim} and \eqref{item:colim} are used in \cite{CEFN}, although they phrase the colimit over an equivalent category. We will refer to this notion as in \eqref{item:presdeg} by \emph{presented in the ranks $\le d$}.

\begin{prop}\label{prop:presdeg}
Let $V$ be a $U\G$--module, then the following are equivalent.
\begin{enumerate}
\item\label{item:presdeg} $V$ is generated and presented in the ranks $\le d$, ie there are $U\G$--modules $P_0, P_1$ freely generated in the ranks $\le d$ such that
\[ P_1 \longrightarrow P_0 \longrightarrow V \longrightarrow 0\]
is exact.
\item\label{item:Lan} Let $\inc_d\colon U\G^{\le d} \inject U\G$ be the full subcategory with the objects $0, \dots, d$. The left Kan extension
\[ \Lan_{\inc_d}(V\circ \inc_d) \cong V\]
is naturally isomorphic to $V$.
\item\label{item:Lancolim} For all $n>d$ there is a natural isomorphism
\[ \colim_{\substack{m \to n\\m\le d}} V_m \cong V_n.\]
\item\label{item:colim} For all $n>d$ there is a natural isomorphism
\[ \colim_{\substack{m \to n\\m<n}} V_m \cong V_n.\]
\end{enumerate}
\end{prop}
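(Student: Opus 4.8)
The plan is to prove the three equivalences $\eqref{item:presdeg}\Leftrightarrow\eqref{item:Lan}$, $\eqref{item:Lan}\Leftrightarrow\eqref{item:Lancolim}$, and $\eqref{item:Lancolim}\Leftrightarrow\eqref{item:colim}$. The recurring ingredients are: the pointwise colimit formula for left Kan extensions; the fact that $\Lan_{\inc_d}$ carries the representable $U\G^{\le d}$--module at an object $m\le d$ to $R\Hom(m,-)$ (a formal consequence of the adjunction $\Lan_{\inc_d}\dashv(\inc_d)^*$ together with the Yoneda lemma in $U\G^{\le d}$ and in $U\G$); right-exactness of $\Lan_{\inc_d}$ and exactness of the restriction functor $(\inc_d)^*$; transitivity of left Kan extensions, $\Lan_{G\circ F}\cong\Lan_G\circ\Lan_F$; and the elementary observation that restricting a left Kan extension along a full subcategory inclusion which contains the image of $F$ again computes the left Kan extension, because the relevant comma categories are unchanged.

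For $\eqref{item:Lan}\Leftrightarrow\eqref{item:Lancolim}$, I would note that the counit $\varepsilon\colon\Lan_{\inc_d}(V\circ\inc_d)\to V$ is an isomorphism on $U\G^{\le d}$ since $\inc_d$ is fully faithful, and that the pointwise formula identifies $\varepsilon_n$ with the canonical comparison map $\colim_{m\to n,\,m\le d}V_m\to V_n$. Hence $\varepsilon$ is an isomorphism -- that is, $\eqref{item:Lan}$ holds -- if and only if this comparison map is an isomorphism for every $n$, equivalently for every $n>d$, which is exactly $\eqref{item:Lancolim}$.

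For $\eqref{item:presdeg}\Leftrightarrow\eqref{item:Lan}$, suppose $\eqref{item:presdeg}$ holds and restrict a presentation $P_1\to P_0\to V\to 0$ by modules freely generated in ranks $\le d$ along $\inc_d$; applying the right-exact functor $\Lan_{\inc_d}$ and using that it returns the $P_i$ gives $P_1\to P_0\to\Lan_{\inc_d}(V\circ\inc_d)\to 0$, and comparing cokernels through the natural transformation $\varepsilon$ forces $\varepsilon$ to be an isomorphism. Conversely, since every object of $U\G^{\le d}$ has rank $\le d$, the $U\G^{\le d}$--module $V\circ\inc_d$ has a presentation by sums of representables at objects of rank $\le d$; applying $\Lan_{\inc_d}$ to it and invoking $\eqref{item:Lan}$ produces a presentation of $V$ by modules freely generated in ranks $\le d$.

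The delicate equivalence, which I expect to be the main obstacle, is $\eqref{item:Lancolim}\Leftrightarrow\eqref{item:colim}$. The tempting argument -- that the inclusion of index categories $\{m\to n\mid m\le d\}\hookrightarrow\{m\to n\mid m<n\}$ is cofinal -- fails, because no morphism of $U\G$ lowers rank, so an object $m'\to n$ with $d<m'<n$ receives no morphism from an object of rank $\le d$. Instead I would bootstrap through the finite truncations, proving by induction on $N\ge d$ that $V|_{U\G^{\le N}}$ is the left Kan extension of $V\circ\inc_d$ along $U\G^{\le d}\hookrightarrow U\G^{\le N}$. In the inductive step from $N$ to $N+1$ the two functors agree on $U\G^{\le N}$ by the induction hypothesis together with the restriction observation, and at the object $N+1$ transitivity of Kan extensions rewrites the Kan-extension value there as $\colim_{m\to N+1,\,m<N+1}V_m$, which condition $\eqref{item:colim}$ at $n=N+1$ identifies with $V_{N+1}$; this completes the step. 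Taking $N=n$ then yields $\eqref{item:colim}\Rightarrow\eqref{item:Lancolim}$, while $\eqref{item:Lancolim}\Rightarrow\eqref{item:colim}$ follows because $\eqref{item:Lancolim}=\eqref{item:Lan}$ already makes $V|_{U\G^{\le N}}$ the corresponding Kan extension for all $N\ge d$, so applying transitivity through $U\G^{\le n-1}$ gives $V_n\cong\colim_{m\to n,\,m<n}V_m$.
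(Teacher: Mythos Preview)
Your proposal is correct and follows essentially the same overall structure as the paper's proof: the equivalence \eqref{item:Lan}$\Leftrightarrow$\eqref{item:Lancolim} via the pointwise colimit formula is identical, and for \eqref{item:presdeg}$\Leftrightarrow$\eqref{item:Lan} you supply the standard argument (right-exactness of $\Lan_{\inc_d}$ and its effect on representables) that the paper outsources to citations of Djament and Gan--Li.

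The only genuine difference is in \eqref{item:Lancolim}$\Leftrightarrow$\eqref{item:colim}. Both proofs proceed by induction on $n$ (or $N$), and both recognise that na\"ive cofinality fails. The paper isolates a ``Claim'' that, assuming the counit $\colim_{(\inc_d\downarrow m)}V\to V_m$ is an isomorphism for all $m<n$, the comparison $\colim_{(\inc_d\downarrow n)}V\to\colim_{(\inc_{n-1}\downarrow n)}V$ is an isomorphism, and proves it by writing down an explicit inverse via the universal property. Your argument packages the same content as transitivity of left Kan extensions together with the compatibility of counits under composition of adjunctions, which is slightly more abstract but cleaner and avoids the ad hoc diagram. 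Either way the inductive step is the same: identify the value at the new object with the one-step colimit, then invoke the hypothesis.
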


\begin{proof}
Djament \cite[Prop 2.14]{Dj} 
%and Gan-Li \cite[Thm 3.2]{GLcentral} independently prove 
proves
that \eqref{item:presdeg} and \eqref{item:Lan} are equivalent in an even more general setting. Gan--Li \cite[Thm 3.2]{GLcentral} gave a different proof with the viewpoint of graded modules over graded nonunital algebra.

The left Kan extension evaluated at $n\in \NN$ can be expressed as the colimit in \eqref{item:Lancolim} (see \cite[Cor X.3.4]{ML}):
\[ (\Lan_{\inc_d}V\circ \inc_d )_n \cong \colim_{(\inc_d\downarrow n)} V = \colim_{\substack{m \to n\\m\le d}} V_m  \]
This proves that \eqref{item:Lancolim} is equivalent to \eqref{item:Lan}.

Finally we want to prove that \eqref{item:Lancolim} and \eqref{item:colim} are equivalent. First note that
\[ \colim_{\substack{m \to n\\m<n}} V_m  =  \colim_{(\inc_{n-1} \downarrow n)} V\]
is just a different way of writing the same colimit. Further there is a natural map
\[ \colim_{(\inc_d\downarrow n)} V \longrightarrow V_n.\]
And because $(\inc_d\downarrow n)$ is a full subcategory of $(\inc_{n-1}\downarrow n)$ there is a map
\[ \colim_{(\inc_d\downarrow n)} V \longrightarrow \colim_{(\inc_{n-1} \downarrow n)} V\]
and similarly fixing a morphism $m\to n$ there is a map
\[ \colim_{(\inc_d\downarrow m)} V \longrightarrow \colim_{(\inc_{d} \downarrow n)} V.\]
Let us prove the following claim.

\textbf{Claim.} \textit{Let $n\ge d$ and assume that
\[ \colim_{(\inc_d\downarrow m)} V \longrightarrow V_m\]
is an isomorphism for all $m< n$, then
\[ \colim_{(\inc_d\downarrow n)} V \longrightarrow \colim_{(\inc_{n-1} \downarrow n)} V\]
is an isomorphism.}

\begin{proof}
\renewcommand{\qedsymbol}{$\blacksquare$}

For every morphism $f\colon m\to n$ which is an object in $(\inc_{n-1}\downarrow n)$, we get the following diagram.
\[ \xymatrix{
\colim_{(\inc_d\downarrow m)}V \ar[r]^{\cong} \ar[d]_{\alpha} & V_m \ar[d]^{\beta}\\
\colim_{(\inc_d\downarrow n)}V \ar@<.5ex>[r]^{\gamma} &\colim_{(\inc_{n-1}\downarrow n)}V\ar@<.5ex>[l]^{\delta}
}\]
Here $\delta$ is the unique map such that $\alpha = \delta\beta$. One checks also checks that $\beta = \gamma\alpha$.
This implies that
\[ \delta\gamma \alpha = \alpha \quad\text{and}\quad \gamma\delta\beta = \beta.\]
The universal property of the colimits implies that $\gamma$ and $\delta$ are inverses.
\end{proof}

We now want to finish our proof of the equivalence of \eqref{item:Lancolim} and \eqref{item:colim} by induction over $n$. For $n=d+1$ the statements are identical. Now assume that
\[ \colim_{(\inc_d\downarrow n)} V \longrightarrow \colim_{(\inc_{n-1} \downarrow n)} V\]
for all $d<m<n$. Then we can certainly assume
\[ \colim_{(\inc_d\downarrow m)} V \longrightarrow V_m\]
is an isomorphism for all $m< n$. The claim then finishes the induction step.
\end{proof}

Putman's \cite{P} original notion of central stability for $\FI$--modules is reflected in condition \eqref{item:sgn} of the following proposition. In the case of symmetric stability categories, this is equivalent to  \eqref{item:H}, which we call \emph{centrally stable} in the ranks $>d$.

\begin{prop}
Let $V$ be a $U\G$--module, then the following are equivalent.
\begin{enumerate}
\item \label{item:H} $\widetilde H_0(V)_n = \widetilde H_{-1}(V)_n = 0$ for all $n >d$.
\item \label{item:nosgn} $V_n \cong \coeq\big(\Ind^{G_n}_{G_{n-2}} V_{n-2} \rightrightarrows \Ind^{G_n}_{G_{n-1}} V_{n-1}\big)$ for all $n>d$.
\end{enumerate}
If $\G$ is symmetric, then the following condition is equivalent to the first two.
\begin{enumerate}
\setcounter{enumi}{2}
\item \label{item:sgn}$V_n \cong \coeq\big(\Ind^{G_n}_{G_{n-2}\times \mathfrak S_2} V_{n-2} \otimes \A_2 \rightrightarrows \Ind^{G_n}_{G_{n-1}} V_{n-1}\big)$ for all $n>d$, where $\mathfrak S_2\subset G_2$ and $\A_2$ is its sign representation.
\end{enumerate}
\end{prop}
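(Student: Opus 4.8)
The plan is to reduce all three conditions to the exactness of the tail of one explicit complex at the object $n$, and then compare. First I would use \autoref{prop:SigmaMod} together with $K_pV=\Sigma^{p+1}V$ to identify the last three terms of the central stability complex $\widetilde C_\bullet(V)_n$ as
\[
\Ind^{G_n}_{G_{n-2}}V_{n-2}\xrightarrow{\ \partial_1\ }\Ind^{G_n}_{G_{n-1}}V_{n-1}\xrightarrow{\ \partial_0\ }V_n\longrightarrow0,
\]
with $\partial_0$ the canonical structure map $g\otimes v\mapsto g\cdot V(\id_{n-1}\oplus\iota_1)(v)$ and $\partial_1=d_0-d_1$ the alternating sum of the two semisimplicial face maps. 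From the face-map formula displayed just after \autoref{prop:SigmaMod} these read $d_0(g\otimes v)=g\otimes\phi'(v)$ and $d_1(g\otimes v)=g\tau\otimes\phi'(v)$, where $\tau=\id_{n-2}\oplus b_{1,1}\in G_n$ and $\phi'=V(\id_{n-2}\oplus\iota_1)$. By the definition of $\widetilde H_*$ one has $\widetilde H_{-1}(V)_n=\coker\partial_0$ and $\widetilde H_0(V)_n=\ker\partial_0/\im\partial_1$ (with $\im\partial_1\subseteq\ker\partial_0$ automatic), and for $n\le1$ the terms with negative indices are to be read as $0$.

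For \eqref{item:H}$\iff$\eqref{item:nosgn} I would invoke the elementary fact that a fork $A\rightrightarrows B\xrightarrow{c}C\to0$ of $R$-modules in which $c$ equalizes the two maps exhibits $C$ as their coequalizer exactly when $c$ is surjective and $\ker c$ is the image of the difference of the two maps. Applying this to $d_0,d_1$ and $c=\partial_0$ (the compatibility $\partial_0d_0=\partial_0d_1$ is one of the semisimplicial relations), condition \eqref{item:nosgn} becomes ``$\partial_0$ epi and $\ker\partial_0=\im\partial_1$'', i.e.\ $\widetilde H_{-1}(V)_n=\widetilde H_0(V)_n=0$, which is \eqref{item:H}.

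For \eqref{item:nosgn}$\iff$\eqref{item:sgn} (assuming $\G$ symmetric) the key observation is that $\partial_1$ already factors through the sign-twisted term. Using the homomorphism $\mathfrak S\to\G$ coming from the symmetry, induction in stages gives $\Ind^{G_n}_{G_{n-2}}V_{n-2}\cong\Ind^{G_n}_{G_{n-2}\times\mathfrak S_2}(V_{n-2}\otimes R\mathfrak S_2)$, and the $\mathfrak S_2$-equivariant surjection $R\mathfrak S_2\twoheadrightarrow\A_2$, $g\mapsto\sgn(g)$, induces a surjection $\pi$ onto $\Ind^{G_n}_{G_{n-2}\times\mathfrak S_2}(V_{n-2}\otimes\A_2)$ whose kernel is generated over $RG_n$ by the elements $(1+\tau)(1\otimes v)$. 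The one real computation is
\[
\partial_1\bigl((1+\tau)(1\otimes v)\bigr)=(1+\tau)(1-\tau)\cdot\bigl(1\otimes\phi'(v)\bigr)=(1-\tau^2)\cdot\bigl(1\otimes\phi'(v)\bigr)=0,
\]
which uses $\tau^2=\id$, i.e.\ $b_{1,1}^2=\id$; this is exactly where symmetry (not just braidedness) enters, and is why the analogous statement fails for $U\beta$. Consequently $\partial_1=\bar\partial_1\circ\pi$ for a unique map $\bar\partial_1$ out of the sign-twisted term -- this $\bar\partial_1$ is the differential of the complex \eqref{eq:firstcomplex}, i.e.\ of $\widetilde C_\bullet(V)_n$ with the $\mathfrak S_{p+1}$-actions divided out -- and $\im\bar\partial_1=\im\partial_1$ since $\pi$ is surjective. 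Hence the coequalizer in \eqref{item:sgn} equals $\coker\bar\partial_1=\Ind^{G_n}_{G_{n-1}}V_{n-1}/\im\partial_1$, and applying the fork criterion once more with $c=\partial_0$ (which kills $\bar\partial_1$ because it kills $\partial_1=\bar\partial_1\pi$ and $\pi$ is epi) rephrases \eqref{item:sgn} as the very condition ``$\partial_0$ epi and $\ker\partial_0=\im\partial_1$'' found for \eqref{item:nosgn}.

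I expect the main obstacle to be bookkeeping rather than substance: extracting the precise formulas for $d_0,d_1$ from \autoref{prop:SigmaMod}, identifying $\ker\pi$, and verifying the displayed vanishing -- in effect, checking that the alternating-sum differential is compatible with the sign twist. I would also remark that the double arrow in \eqref{item:sgn} is harmless, since a coequalizer depends only on the difference of its two maps and the only natural map out of the sign-twisted term in this degree is $\bar\partial_1$, so that coequalizer is unambiguously $\coker\bar\partial_1$.
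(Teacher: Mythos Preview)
Your argument is correct and follows the same route as the paper. For \eqref{item:H}$\Leftrightarrow$\eqref{item:nosgn} you and the paper both reduce the coequalizer statement to exactness of the tail $\Ind^{G_n}_{G_{n-2}}V_{n-2}\to\Ind^{G_n}_{G_{n-1}}V_{n-1}\to V_n\to0$ of the central stability complex; for \eqref{item:nosgn}$\Leftrightarrow$\eqref{item:sgn} the paper simply asserts that the two difference maps have the same image in $\Ind^{G_n}_{G_{n-1}}V_{n-1}$ (both send $v$ to $(1-t)\otimes\phi'(v)$), while you prove exactly this by showing $\partial_1$ annihilates $(1+\tau)\otimes v$ via $\tau^2=\id$ and hence factors through the sign-twisted quotient with the same image. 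Your version is more explicit about where symmetry is used, but the content is the same.
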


\begin{proof}
Conditions \eqref{item:H} and \eqref{item:nosgn} are equivalent because \eqref{item:nosgn} is equivalent to the sequence
\[ \Ind^{G_n}_{G_{n-2}} V_{n-2}  \longrightarrow \Ind^{G_n}_{G_{n-1}} V_{n-1} \longrightarrow V_n \longrightarrow 0\]
being exact, which is part of the central stability complex.

To see that \eqref{item:nosgn} and \eqref{item:sgn} are equivalent, we have to check the image that the images in $\Ind^{G_n}_{G_{n-1}} V_{n-1}$ coincide. Let $t\in \mathfrak S_2$ be the transposition, then both maps are given by
\[ v \mapsto (1-t) \otimes \phi'(v)\]
for $v\in V_{n-2}$. This completes the proof.
\end{proof}

With \autoref{thm:res of finite type} we can connect these two notions. Note that the assumption of the theorem can be expressed as follows.

\begin{description}
\item[H3($1$)] $b_{1,1}\oplus \id_{n-2}$ and $1\times G_{n-1}$ generate $G_n$ for all $n>a$.
\end{description}

\begin{cor}\label{cor:pres central}
Assume \con{H3($1$)}. Let $V$ be a $U\G$--module. Then the following are equivalent.
\begin{enumerate}
\item $V$ is generated in ranks $\le d$ and presented in ranks $\le d+a$.
\item $V$ is generated in ranks $\le d$ and centrally stable in the ranks $>d+a$.
\end{enumerate}
\end{cor}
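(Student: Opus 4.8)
The plan is to recognise Corollary~\ref{cor:pres central} as the case $N=1$ of \autoref{thm:res of finite type}, after translating both conditions into vanishing statements for $\widetilde H_{-1}$ and $\widetilde H_0$. The assumption \con{H3($1$)} is exactly the hypothesis \con{H3($N$)} of that theorem for $N=1$.

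First I would unwind the terminology. By \autoref{prop:fg} (and the remark following it), ``$V$ is generated in ranks $\le d$'' is equivalent to $\widetilde H_{-1}(V)_n=0$ for all $n>d$. Reading ``$V$ is presented in ranks $\le d+a$'' (in the sense of \autoref{prop:presdeg}\eqref{item:presdeg}) together with generation in ranks $\le d$ as the existence of a partial resolution
\[ P_1 \longrightarrow P_0 \longrightarrow V \longrightarrow 0 \]
with $P_0$ freely generated in ranks $\le d$ and $P_1$ freely generated in ranks $\le d+a$, condition~(1) becomes precisely condition~(a) of \autoref{thm:res of finite type} with $N=1$, $d_0=d$, $d_1=d+a$. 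On the other side, ``$V$ is centrally stable in the ranks $>d+a$'' unwinds to $\widetilde H_0(V)_n=\widetilde H_{-1}(V)_n=0$ for all $n>d+a$; combined with ``$V$ generated in ranks $\le d$'', i.e. $\widetilde H_{-1}(V)_n=0$ already for $n>d$, the $\widetilde H_{-1}$--part for $n>d+a$ is redundant and condition~(2) becomes
\[ \widetilde H_{-1}(V)_n=0 \text{ for } n>d \qquad\text{and}\qquad \widetilde H_0(V)_n=0\text{ for }n>d+a, \]
which is exactly condition~(b) of \autoref{thm:res of finite type} for the same $N$ and the same $d_0,d_1$. With these identifications the corollary is the stated theorem.

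The point that needs care is the range hypothesis $d_{i+1}-d_i\ge\max(k,a)$ of \autoref{thm:res of finite type}: here it reads $a\ge\max(k,a)$, i.e. $a\ge k$, which holds in every case of \autoref{exs:stabcat} by the table in \autoref{rem:RWH3}. More robustly, inspecting the proof of \autoref{thm:res of finite type} for $N=1$ shows that both the chain of injections in (a)$\Rightarrow$(b) and the chain of surjections in (b)$\Rightarrow$(a) have length one, and the only inequality that actually enters is $d_0+a\le d_1$, so the corollary holds as stated with no extra assumption on $k$. I expect the remaining friction to be purely bookkeeping: verifying that ``generated in ranks $\le d$ and presented in ranks $\le d+a$'' is realised by a two-step free resolution with the free modules of \emph{exactly} these ranks --- rather than a presentation whose $P_0$ is only known to live in ranks $\le d+a$ --- which follows by choosing $P_0$ free on a generating set of $V$ in ranks $\le d$ and observing that the kernel is then generated in ranks $\le d+a$ precisely because $\widetilde H_0(V)_n=0$ for $n>d+a$ and $\widetilde H_{-1}(P_0)_n=0$ for $n>d$, the same computation used in the proof of \autoref{thm:res of finite type}.
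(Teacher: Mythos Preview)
Your approach is exactly the one the paper intends: the corollary is stated without proof precisely because it is the $N=1$ instance of \autoref{thm:res of finite type}, and your translation of both sides into the vanishing of $\widetilde H_{-1}$ and $\widetilde H_0$ is correct. Your treatment of the range condition is also right: for $N=1$ the only place the constants enter is through $\widetilde H_0(P_0)_n=0$ for $n>a+d_0$ (from \autoref{prop:homology of free} with $i=0$, where $k\cdot 0=0$), so one genuinely only needs $d_1\ge d_0+a$ and $k$ is irrelevant. Equivalently, since \con{H3($1$)} imposes no constraint involving $k$, one may simply take $k\le a$ and apply the theorem verbatim.

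There is, however, a small gap in your final paragraph. To pass from ``$V$ generated in ranks $\le d$ and presented in ranks $\le d+a$'' to condition~(a) of \autoref{thm:res of finite type} with $d_0=d$, $d_1=d+a$, you must produce a presentation $P_1\to P_0\to V\to 0$ with $P_0$ free in ranks $\le d$ (not just $\le d+a$) and $P_1$ free in ranks $\le d+a$. Your argument for this invokes $\widetilde H_0(V)_n=0$ for $n>d+a$, but that is precisely what you are trying to establish in the direction (a)$\Rightarrow$(b), so the reasoning is circular. The clean fix is Schanuel's lemma: if $P_0\twoheadrightarrow V$ with $P_0$ free in ranks $\le d$ and $Q_0\twoheadrightarrow V$ with $Q_0$ free in ranks $\le d+a$ are two surjections from free (hence projective) $U\G$--modules, then $\ker(P_0\to V)\oplus Q_0\cong \ker(Q_0\to V)\oplus P_0$; since the right-hand side is generated in ranks $\le d+a$, so is $\ker(P_0\to V)$, and you can take $P_1$ free on a generating set of that kernel.
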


\begin{cor}
\begin{enumerate}
\item A $U\mathfrak S$--module is generated in ranks $\le d$ and presented in ranks $\le d+1$ if and only if it is generated in ranks $\le d$ and centrally stable in the ranks $>d+1$.
\item Let $R$ be a ring with stable rank $s$. A $U\GL(R)$--module is generated in ranks $\le d$ and presented in ranks $\le d+s+1$ if and only if it is generated in ranks $\le d$ and centrally stable in the ranks $>d+s+1$.
\item Let $R$ be a ring with unitary stable rank $s$. A $U\Sp(R)$--module is generated in ranks $\le d$ and presented in ranks $\le d+s+2$ if and only if it is generated in ranks $\le d$ and centrally stable in the ranks $>d+s+2$.
\item A $U\AutF$--module is generated in ranks $\le d$ and presented in ranks $\le d+2$ if and only if it is generated in ranks $\le d$ and centrally stable in the ranks $>d+2$.
\item A $U\beta$--module is generated in ranks $\le d$ and presented in ranks $\le d+1$ if and only if it is generated in ranks $\le d$ and centrally stable in the ranks $>d+1$.
\item A $U\Mod\Sigma$--module is generated in ranks $\le d$ and presented in ranks $\le d+2$ if and only if it is generated in ranks $\le d$ and centrally stable in the ranks $>d+2$.
\end{enumerate}
\end{cor}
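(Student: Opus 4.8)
The plan is to deduce each of the six statements (a)--(f) as a special case of \autoref{cor:pres central}, applied to the stability groupoid appearing in the corresponding line of \autoref{exs:stabcat}, once the hypothesis \con{H3($1$)} has been verified with the correct constant $a$.

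First I would observe that \con{H3($1$)} is nothing but the case $N=1$ of \con{H3($N$)}, which, as recorded just before \autoref{cor:pres central}, amounts to the concrete requirement that $b_{1,1}\oplus\id_{n-2}$ together with $1\times G_{n-1}$ generate $G_n$ for all $n>a$. By \autoref{rem:RWH3}, the condition \con{LH3} of \cite{RW} implies \con{H3($\infty$)}, hence in particular \con{H3($1$)}, for all six stability categories, with the pairs $(k,a)$ read off from the table there: $(1,1)$ for $U\mathfrak S$, $(2,s+1)$ for $U\GL(R)$ with $R$ of stable rank $s$, $(2,s+2)$ for $U\Sp(R)$ with $R$ of unitary stable rank $s$, $(2,2)$ for $U\AutF$, $(1,1)$ for $U\beta$, and $(2,2)$ for $U\Mod\Sigma$.

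Then I would substitute these values of $a$ into \autoref{cor:pres central}. The clause ``presented in ranks $\le d+a$'' specializes to ``presented in ranks $\le d+1$'', ``$\le d+s+1$'', ``$\le d+s+2$'', ``$\le d+2$'', ``$\le d+1$'', ``$\le d+2$'' in cases (a)--(f) respectively, and ``centrally stable in the ranks $>d+a$'' specializes in the same way, which yields (a)--(f) verbatim. The closest thing to an obstacle is a small bookkeeping point: \autoref{cor:pres central} ultimately rests on \autoref{thm:res of finite type} applied with $N=1$, $d_0=d$, $d_1=d+a$, whose gap hypothesis requires $d_1-d_0=a\ge\max(k,a)$; this holds on every line of the table because $a\ge k$ there, using that the stable rank and the unitary stable rank of a ring are always at least $1$. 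Granting this, the proof is a direct appeal to \autoref{cor:pres central}.
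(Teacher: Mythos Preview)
Your proposal is correct and matches the paper's intended argument: the paper states this corollary immediately after \autoref{cor:pres central} without proof, so it is meant to follow by plugging in the values of $a$ from the table in \autoref{rem:RWH3}, exactly as you do. Your extra check that $a\ge k$ on every line of the table (so that the gap hypothesis $d_1-d_0\ge\max(k,a)$ of \autoref{thm:res of finite type} is satisfied when deducing \autoref{cor:pres central}) is a point the paper leaves implicit but which is indeed needed.
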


%%%%%%%%%% Section  %%%%%%%%%%%

\section{Polynomial degree}\label{sec:poly}

As we defined $S$ as the precomposition of $-\oplus 1$, for the next definition we will use $T$ defined as the precomposition of $1 \oplus -$. The following definition is an adaptation of van der Kallen's \cite{vdK} degree of a coefficient system. Our version is almost identical with \cite[Def 4.10]{RW}.

%\begin{Def}
%Let $V$ be a $U\G$--module then we define $\ker V = \ker(V \to TV)$ and $\coker V = \coker(V\to TV)$. We say $V$ has \emph{polynomial degree $-\infty$ at $N\ge 0$} if $V_n=0$ for all $n\ge N$. For $r\ge0$, we say $V$ has \emph{polynomial degree $\le r$ at $N\ge 0$} if $(\ker V)_n = 0$ for all $n\ge N$ and $\coker V$ has polynomial degree $\le r-1$ at $N-1$. 
%\end{Def}

\begin{Def}\label{Def:polynomial degree}
Let $V$ be a $U\G$--module then we define the $U\G$--modules 
\[\ker V = \ker(V \to TV)\text{ and }\coker V = \coker(V\to TV).\] 
We say $V$ has \emph{polynomial degree $-\infty$ in the ranks $> d$} if $V_n=0$ for all $n> d$. For $r\ge0$ we say $V$ has \emph{polynomial degree $\le r$ in the ranks $> d$} if $(\ker V)_n = 0$ for all $n > d$ and $\coker V$ has polynomial degree $\le r-1$ in the ranks $> d$. 
\end{Def}

\begin{rem}
Assuming that the dimension of $V_n$ is finite for all $n> d$, $V$ having polynomial degree $\le r$ in the ranks $> d$ implies that the dimension grows polynomially of degree $\le r$ in the ranks $> d$.
\end{rem}

\begin{lem}\label{lem:finite degree}
\begin{enumerate}
\item Let
\[ 0\longrightarrow V' \longrightarrow V \longrightarrow V'' \longrightarrow 0\]
be an exact sequence of $U\G$--modules in ranks $> d$, i.e.
\[ 0\longrightarrow V'_n \longrightarrow V_n \longrightarrow V''_n \longrightarrow 0\]
is a short exact sequence for all $n> d$. Then if two of the three $U\G$--modules $V',V,V''$ have polynomial degrees $\le r$ in ranks $> d$, then so does the third.
\item Assume the base ring $R$ is a field. If $V$ and $W$ are $U\G$--modules have polynomial degrees $\le r$ and $\le s$ in ranks $> d$, respectively, then  $V\otimes W$ has polynomial degree $\le r+s$ in ranks $> d$, where $V\otimes W$ is defined by $(V\otimes W)_n = V_n \otimes W_n$. 
\end{enumerate}
\end{lem}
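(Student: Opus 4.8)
The plan is to prove (a) by induction on $r$ using the snake lemma, and then to deduce (b) from (a) by a second induction. Throughout, recall that $T$ (precomposition with $1\oplus-$) is exact; write $\eta\colon\id\Rightarrow T$ for the canonical natural transformation, with component $\eta_M\colon M\to TM$. Applying $\eta$ to the given short exact sequence gives a commutative ladder with exact rows, and the snake lemma produces a six-term exact sequence of $U\G$-modules, valid in ranks $>d$:
\[ 0\to\ker V'\to\ker V\to\ker V''\to\coker V'\to\coker V\to\coker V''\to 0. \]
The single observation that makes part (a) go is: for any $U\G$-module $M$ the composite $\ker M\hookrightarrow M\xrightarrow{\eta_M}TM$ is zero and $T$ preserves monomorphisms, so the structure map of $\ker M$ vanishes and $\ker(\ker M)=\ker M$; hence any $U\G$-module that is a kernel and has finite polynomial degree in ranks $>d$ is zero in ranks $>d$.

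For (a), the base case $r=-\infty$ is immediate from the original sequence. In the inductive step, fix which two of $V',V,V''$ have polynomial degree $\le r$ (with $r\ge 0$). Reading off the $\coker$-part of the six-term sequence and using that the two cokernels of the given modules have degree $\le r-1$, one sees that $\coker(\text{the third module})$ is built by extension from modules that are submodules or quotients of those two cokernels; an application of the inductive hypothesis, strengthened to include closure of ``degree $\le r-1$'' under submodules and quotients, shows $\coker(\text{the third module})$ has degree $\le r-1$. For the $\ker$-part, the six-term sequence only exhibits $\ker(\text{the third module})$ as a submodule of one of the cokernels, hence of finite polynomial degree; since it is a kernel, the observation above forces it to vanish in ranks $>d$. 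Together these give that the third module has degree $\le r$. The main obstacle is organizing the induction (including the auxiliary submodule/quotient statements) so that this last step is never circular; I expect this bookkeeping to be the part requiring the most care.

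For (b), assume $R$ is a field and induct on $r+s$; if $r$ or $s$ is $-\infty$ then $V\otimes W=0$ in ranks $>d$. For $r,s\ge 0$ one has $\eta_{V\otimes W}=\eta_V\otimes\eta_W$, and since both factors are injective in ranks $>d$ and we are over a field, $\eta_{V\otimes W}$ is injective in ranks $>d$; thus $\ker(V\otimes W)=0$ there. For the cokernel, factor $\eta_V\otimes\eta_W=(\eta_V\otimes\id_{TW})\circ(\id_V\otimes\eta_W)$ and tensor the short exact sequences $0\to W\to TW\to\coker W\to 0$ and $0\to V\to TV\to\coker V\to 0$ (both exact in ranks $>d$) by $V$ and by $TW$ respectively. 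Exactness of $-\otimes_R-$ over a field identifies the cokernels of the two factors with $V\otimes\coker W$ and $\coker V\otimes TW$, while $\ker(\eta_V\otimes\id_{TW})=\ker V\otimes TW$ vanishes in ranks $>d$; feeding this into the standard exact sequence $\ker v\to\coker u\to\coker(vu)\to\coker v\to 0$ for a composite gives a short exact sequence
\[ 0\to V\otimes\coker W\to\coker(V\otimes W)\to\coker V\otimes TW\to 0 \]
valid in ranks $>d$. Since $T$ preserves polynomial degree (up to lowering the threshold by one, via the braiding), $\coker W$ has degree $\le s-1$, and $\coker V$ has degree $\le r-1$, the inductive hypothesis makes both outer terms of degree $\le r+s-1$, and part (a) (extension case) then gives $\coker(V\otimes W)$ degree $\le r+s-1$. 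With $\ker(V\otimes W)=0$ in ranks $>d$, this is exactly polynomial degree $\le r+s$ for $V\otimes W$. The one subtlety in (b) is the correct identification of this two-step filtration on $\coker(V\otimes W)$, which is where the field hypothesis enters.
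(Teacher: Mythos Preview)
Your proof of (b) matches the paper's: both factor $\eta_{V\otimes W}=(\eta_V\otimes\id_{TW})\circ(\id_V\otimes\eta_W)$, use exactness of $\otimes_R$ over a field to identify the two cokernels, observe that $\ker(\eta_V\otimes\id_{TW})=0$ in ranks $>d$, and so obtain the short exact sequence $0\to V\otimes\coker W\to\coker(V\otimes W)\to\coker V\otimes TW\to 0$ in ranks $>d$; induction on $r+s$ together with (a) finishes.

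For (a) there is a genuine gap: the auxiliary closure of ``degree $\le r-1$ in ranks $>d$'' under submodules and quotients that you propose to carry through the induction is \emph{false}, not merely awkward to organize. For example, the constant module $R\Hom_{U\G}(0,-)$ has degree $\le 0$ in all ranks, but for any $N$ its quotient by the submodule generated in rank $N+1$ has nonzero $\ker$ at rank $N$, hence has no finite polynomial degree in ranks $>d$ once $d<N$. The repair uses your own observation more sharply. Since the structure map of $\ker V''$ vanishes and the connecting homomorphism $\ker V''\to\coker V'$ is a map of $U\G$-modules, its image lies in $\ker\coker V'$. When $V'$ is one of the two given modules this is zero in ranks $>d$ (because $\coker V'$ already has degree $\le r-1$); in the remaining case $\ker V''$ itself vanishes in ranks $>d$. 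So in every case the connecting map is zero there and the six-term sequence splits into
\[0\to\ker V'\to\ker V\to\ker V''\to 0\qquad\text{and}\qquad 0\to\coker V'\to\coker V\to\coker V''\to 0\]
in ranks $>d$. Now the \emph{unstrengthened} two-out-of-three induction applies directly to each piece, with no need for any closure under subobjects or quotients. This is exactly how the paper argues.
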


\begin{proof}
\begin{enumerate}
\item From the snake lemma we get an exact sequence
\[ 0\longrightarrow \ker V'_n\longrightarrow \ker V_n\longrightarrow \ker V''_n \longrightarrow \coker V'_n \longrightarrow \coker V_n \longrightarrow \coker V''_n \longrightarrow 0\]
for all $n> d$. If $V'$ has polynomial degree $r$ in ranks $> d$,  
\[ \xymatrix{
\ker V''_n  \ar[r]\ar@{=}[d]& \coker V'_n\\
\ker^2 V''_n \ar[r]& \ker \coker V'_n =  0 \ar[u]
}\]
for all $n> d$. Hence this sequence actually splits into two short exact sequences. If $V$ and $V''$ have finite polynomial degree, then the above sequence still splits into two short exact sequences. The assertion is shown by induction on the degree.
\item If $-\infty \in \{r,s\}$, also $V\otimes W$ has polynomial degree $-\infty$ in ranks $> d$. Thus $\ker(V\otimes W)_n = 0$ for all large enough $n$. For an induction argument we consider the following commutative diagram.
\[\xymatrix{
&0\ar[d]&0\ar[d]&0\ar[d]\\
&V\otimes \ker W \ar[d] \ar[r]&\ker(V\otimes W) \ar[d] \ar[r]&\ker V \otimes TW\ar[d]\\
&V\otimes W \ar[d] \ar@{=}[r]& V\otimes W \ar[d] \ar[r]& V \otimes TW\ar[d]\\
&V\otimes TW \ar[d] \ar[r]&T(V\otimes W)\ar[d] \ar@{=}[r] & TV \otimes TW\ar[d]\\
& V\otimes \coker W \ar[d] \ar[r]& \coker (V\otimes W) \ar[d] \ar[r]& \coker V \otimes TW\ar[d] \ar[r]&0\\
&0&0&0
}\]
Clearly the three columns are exact. By chasing this diagram, we can prove that the top and the bottom row are also exact. Even more,
\[ 0\longrightarrow V_n\otimes \coker W_n \longrightarrow \coker (V\otimes W)_n \longrightarrow \coker V_n\otimes TW_n \longrightarrow 0\]
is a short exact sequence for all  $n> d$ because $V_n\otimes TW_n \to TV_n \otimes TW_n$ is injective for all $n> d$. By induction  $\coker(V\otimes W)$ has polynomial degree $\le r+s-1$ in ranks $> d$ using (a).\qedhere
\end{enumerate}
\end{proof}

The following lemma is essentially \cite[Lem 3.11]{PS}.

\begin{lem}\label{lem:zeromap}
Let $V$ be a $U\G$--module then the map $\id_n\oplus \iota_1\colon n \to n \oplus 1$ induces the zero map on $\widetilde H_{-1}V$. This implies that every morphism of $U\G$ that is not an isomorphism induces the zero map on $\widetilde H_{-1}V$.
\end{lem}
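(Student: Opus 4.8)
The plan is to compute directly what the map $V(\id_n\oplus\iota_1)$ does on $\widetilde H_{-1}$, using the explicit description of the central stability complex from Proposition \ref{prop:SigmaMod}. Recall that $\widetilde H_{-1}(V)_n = \coker\big((\Sigma V)_n \to V_n\big)$, where the map $(\Sigma V)_n \cong RG_n\otimes_{RG_{n-1}} V_{n-1} \to V_n$ is induced by $\iota_1\in\Hom(0,1)$. Concretely, by Proposition \ref{prop:SigmaMod} (with $p=1$, $q=0$), this is the map $g\otimes v \mapsto g\cdot\phi(v)$, where $\phi = V(\iota_1\oplus\id_{n-1})$; so $\widetilde H_{-1}(V)_n$ is the quotient of $V_n$ by the $RG_n$-submodule generated by the image of $\phi = V(\iota_1\oplus\id_{n-1})$, i.e.\ by everything in the image of a non-invertible generating morphism into $n$.

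First I would trace through how $V(\id_n\oplus\iota_1)\colon V_n \to V_{n+1}$ interacts with these complexes. The morphism $\id_n\oplus\iota_1\colon n\to n\oplus 1$ in $U\G$ is, in the notation after Proposition \ref{prop:stabcat is homogeneous}, the class $\id_{n+1}G_1$ (thinking of $n\oplus 1 = 1\oplus n$ after applying the braiding, or more directly: it is the standard complemented inclusion with one-dimensional complement). The key observation is that $\id_n\oplus\iota_1$ factors as $n \xrightarrow{\iota_1\oplus\id_n} 1\oplus n \xrightarrow{b^{-1}} n\oplus 1$ up to the braiding isomorphism, or alternatively that it equals $(\id_n\oplus\iota_1)\circ\id_n$ where the displayed morphism is precisely of the form $V(\sigma')$ for a complement $\sigma'$ appearing in the definition of $\widetilde H_{-1}(V)_{n+1}$. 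Hence for any $v\in V_n$, the element $V(\id_n\oplus\iota_1)(v)$ lies in the image of a non-invertible morphism $n\to n+1$, and therefore maps to $0$ in $\widetilde H_{-1}(V)_{n+1} = V_{n+1}/(\text{submodule generated by images of non-invertible morphisms})$. This gives the first assertion.

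For the second assertion — that \emph{every} non-isomorphism $\eta\colon m\to n$ in $U\G$ induces the zero map on $\widetilde H_{-1}$ — I would use the naturality of $\widetilde H_{-1}$ together with the factorization of morphisms in $U\G$. By condition (c) of Definition \ref{Def:weakly complemented category}, any $\eta\colon m\to n$ factors as $\eta = f\circ(\iota_D\oplus\id_m)$ with $f\colon D\oplus m \to n$ an isomorphism and $D$ the (nonzero, since $\eta$ is not an isomorphism) complement; since $D$ has positive rank, $\iota_D\oplus\id_m$ factors through $\id_m\oplus\iota_1$ composed with further morphisms (writing $D = (D\ominus 1)\oplus 1$ via cancellation in $\NN$ and the $H$-axioms). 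Because isomorphisms act invertibly on $\widetilde H_{-1}$ and the functorial square for $\id_m\oplus\iota_1$ commutes, the factor $\id_m\oplus\iota_1$ already kills everything by the first part, so $\eta$ does too. I expect the main obstacle to be purely bookkeeping: getting the braiding conventions and the precise identification of $\id_n\oplus\iota_1$ with a complemented morphism $\sigma'$ exactly right, so that one can legitimately invoke the description of $\widetilde H_{-1}$ as a quotient by images of non-invertible morphisms — once that identification is pinned down, both statements fall out immediately.
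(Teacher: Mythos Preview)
Your proposal is correct and follows essentially the same approach as the paper: both arguments observe that $V(\id_n\oplus\iota_1)(v)$ lies in the image of $(\Sigma V)_{n+1}\to V_{n+1}$ (the paper phrases this as a factorization $V_n\to\Sigma V_{n+1}\to V_{n+1}$ via $v\mapsto 1\otimes v$), hence dies in $\widetilde H_{-1}(V)_{n+1}$, and then the second sentence follows by factoring any non-isomorphism through such a map. One small bookkeeping point: by \autoref{prop:SigmaMod} the map $(\Sigma V)_n\to V_n$ uses $\phi'=V(\id_{n-1}\oplus\iota_1)$ rather than $V(\iota_1\oplus\id_{n-1})$, but as you anticipated this is immaterial since the two differ by the braiding and generate the same $RG_n$-submodule.
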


\begin{proof}
Let
\[ V_n \to \Sigma V_{n+1} \cong RG_{n+1}\otimes_{RG_n} V_n \]
be given by the inclusion $v\mapsto 1\otimes v$. Then the following diagram commutes by the calculations from  \autoref{prop:SigmaMod}.
\[\xymatrix{
& V_n \ar[ld]\ar[r] \ar[d] &  \widetilde H_{-1}V_n \ar[r]\ar[d]&0\\
\Sigma V_{n+1} \ar[r] & V_{n+1} \ar[r] &\widetilde H_{-1}(V)_{n+1}  \ar[r]&0
}\]
The exactness of the lower row implies that $V_n \to \widetilde H_{-1}V_{n+1}$ must be the zero map. Thus surjectivity of $V_n\to \widetilde H_{-1}V_n$ implies the assertion.
\end{proof}

\begin{prop}\label{prop:fgcoker}
Let $V$ be a $U\G$--module and $d\in \NN$. If $ \coker V$ is generated in ranks $\le d-1$, then $ V$ is generated in ranks $\le d$.
\end{prop}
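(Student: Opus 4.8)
The plan is to reduce the statement to the surjectivity criterion of \autoref{prop:fg}: we must show that if $\coker V$ is generated in ranks $\le d-1$, i.e.\ $(\Sigma \coker V)_n \to (\coker V)_n$ is onto for all $n \ge d$, then $(\Sigma V)_n \to V_n$ is onto for all $n > d$. I would argue by induction on $n$, starting from $n = d+1$. The base of the induction should come from $V$ being generated in ranks $\le d$ on objects of rank $\le d$ for free (choose a free module $P$ with $P_m \surject V_m$ for $m \le d$ using Yoneda, exactly as in the proof of \autoref{prop:fg}); so the real content is the inductive step, propagating generation from rank $n-1$ to rank $n$.

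The key algebraic input is the relation between $V$, $TV = V(1 \oplus -)$, $\Sigma V$, and the cokernel. First I would set up the commutative diagram with exact rows coming from the defining short exact sequence $0 \to \ker V \to V \to TV \to \coker V \to 0$ (note $\ker V$ plays no role here since we only need surjectivity). The crucial observation is that the natural transformation $\Sigma V \to V$ factors, on the object $n$, through pieces involving $V_{n-1}$, because $(\Sigma V)_n \cong G_n \times_{G_{n-1}} V_{n-1}$ by \autoref{prop:sigmap}/\autoref{prop:SigmaMod}; and the map $V_{n-1} \to V_n$ it uses is exactly $\phi_{n-1} = V(\iota_1 \oplus \id_{n-1})$, whose cokernel is (by definition, after identifying $T$ with the relevant suspension) governed by $\coker V$. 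So an element of $V_n$ is either in the image of $\Sigma V_n$ (hence of the $U\G$-action on $V_{n-1}$, done by induction) or maps to a nonzero class in $(\coker V)_n$ — more precisely one should chase: cover $(\coker V)_n$ by $(\Sigma \coker V)_n$, lift, and combine with the inductive hypothesis on $V_{n-1}$.

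Concretely, I would argue: let $x \in V_n$ and let $\bar x \in (\coker V)_n$ be its image under $V_n \to (TV)_n \to (\coker V)_n$ — wait, one must be careful about which map $V_n \to (TV)_n$ is meant; the right framing is that $\coker V$ sits in $0 \to \im(V \to TV) \to TV \to \coker V \to 0$, and the composite $V_n \to (TV)_n$ is an isomorphism onto $\im$ if $\ker V$ vanishes there, but in general we only know surjectivity statements, so I would phrase everything in terms of $TV$ directly. Since $\coker V$ is generated in ranks $\le d-1$, for $n-1 \ge d-1$ (i.e.\ $n \ge d$) the map $(\Sigma \coker V)_n \to (\coker V)_n$ is surjective, so $\bar x$ lifts through some class supported on rank $n-1$; subtracting a suitable element coming from $(TV)_{n-1}$ (which, via $\Sigma$-type identifications and the inductive hypothesis that $V$ is generated in ranks $\le d$ at level $n-1$, lies in the $U\G$-span of rank-$d$ generators), we reduce $x$ to an element of $\im(V_{n-1} \to V_n) = \im(\Sigma V_n \to V_n)$, which by induction is again in the span of rank-$d$ generators.

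The step I expect to be the main obstacle is pinning down the precise diagram relating $T$, $\Sigma$, and $\coker V$ at the object $n$ — in particular checking that a lift of $\bar x \in (\coker V)_n$ through $(\Sigma \coker V)_n$ can be realized by an actual element of $V$ supported (in the $U\G$-module sense) in rank $n-1$, rather than merely an element of $TV$ or $\coker V$. This requires tracking the identifications $(\Sigma V)_n \cong G_n \times_{G_{n-1}} V_{n-1}$ and $(TV)_n = V_{n+1}$ carefully and using that $\coker$ and $\Sigma$ both "see" the rank-$(n-1)$ data compatibly; once the diagram is correct, the surjectivity chase and the induction are routine, and the conclusion follows from \autoref{prop:fg}.
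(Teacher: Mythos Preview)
Your approach is workable and leads to a correct proof, but it differs from the paper's argument and is more laborious than necessary. Two remarks on your write-up first. The induction on $n$ is superfluous: once you fix $n>d$ and show $(\Sigma V)_n\to V_n$ is onto, you are done by \autoref{prop:fg}; there is no need to assume anything at level $n-1$. Second, your indices slip: since $(TV)_{n-1}=V_n$, the relevant cokernel is $(\coker V)_{n-1}$, not $(\coker V)_n$, and the hypothesis gives surjectivity of $(\Sigma\coker V)_{n-1}\to(\coker V)_{n-1}$ precisely for $n>d$. Your flagged ``obstacle'' is resolved by \con{H1}: any morphism $n-1\to n$ in $U\G$ is $h\circ(\iota_1\oplus\id_{n-1})$ for some $h\in G_n$, so the lifted correction terms coming from $(TV)_{n-2}=V_{n-1}$ land in $G_n\cdot\im(V(\iota_1\oplus\id_{n-1}))=\im((\Sigma V)_n\to V_n)$, and the residual lies in $\im(V_{n-1}\to V_n)$ which is contained in (not equal to, as you wrote) the same image.

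The paper's proof is shorter and more conceptual: it exploits \autoref{lem:zeromap}, which says every non-isomorphism acts as zero on $\widetilde H_{-1}V$. Applying $\widetilde H_{-1}$ to $V\to TV\to\coker V\to 0$ and using that $\widetilde H_{-1}V\to T\widetilde H_{-1}V$ is zero, one gets an epimorphism $\coker V\twoheadrightarrow T\widetilde H_{-1}V$. Hence $T\widetilde H_{-1}V$ is generated in ranks $\le d-1$; but a $U\G$--module whose structure maps are all zero and which is generated in ranks $\le d-1$ must vanish in ranks $>d-1$, so $\widetilde H_{-1}V_n=(T\widetilde H_{-1}V)_{n-1}=0$ for $n>d$. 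Your chase unwinds the same mechanism by hand at the level of elements, which is more elementary but hides the structural reason (the zero-map lemma) that makes the argument go through.
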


\begin{proof}
We consider the following diagram.
\[\xymatrix{
V \ar[r] \ar@{->>}[d] & TV \ar[r] \ar@{->>}[d]& \coker V \ar[r] \ar@{->>}[d]& 0\\
\widetilde H_{-1}V \ar[r] & T\widetilde H_{-1}V \ar[r] & \coker \widetilde H_{-1}V \ar[r] & 0
}\]
The map $\widetilde H_{-1}V \to T\widetilde H_{-1}V$ is the zero map because of \autoref{lem:zeromap}. Thus there is an epimorphism
\[ \xymatrix{ \coker V  \ar@{->>}[r] & T \widetilde H_{-1} V}.\]
This implies that $T \widetilde H_{-1} V$  is generated in ranks $\le d-1$. Using  \autoref{lem:zeromap} again, we infer that $\widetilde H_{-1}V_{n+1} = T\widetilde H_{-1}V_n = 0$ for all $n> d$. Therefore $V$ is generated in ranks $\le d$ by  \autoref{prop:fg}.
\end{proof}

\begin{Def}
Let $\ell,b\ge 1$. We define the following condition.
\begin{description}
\item[H4($N$)] $\widetilde H_i(\coker R\Hom(m,-))_n = 0$ for all $i<N$ and all $n > \ell \cdot( i +  m )+ b$.
\end{description}
\end{Def}

\begin{rem}\label{rem:shiftfree}
Knowing that $\coker R\Hom(m,-)$ is stably acyclic for all $m\in \NN$, one can also follow that $T^r R\Hom(m,-)$ is stably acyclic for all $m,r\in \NN$.
\end{rem}

%\begin{adapt}
%\begin{prop}\label{prop:H(coker V)}\comment{Assume $\ker V = 0$.}
%Let $N\in \mathbb N$.  Assume \con{H3($N$)} and \con{H4($N$)} with $b\ge \max(k,a)$. Let $d_0,\dots, d_N\in \NN$ with $d_i\ge \ell \cdot d_{i-1} + b$.
%If $\widetilde H_i(\coker V)_n=0$ for all $i\le N-1$ and all $n>d_{i+1}$, then $\widetilde H_i(V)_n = 0$ for all $i\le N-1$ and $n>d_{i+1}+1$.
%\end{prop}
%\end{adapt}

\begin{prop}\label{prop:H(coker V)}
Let $N\in \mathbb N$.  Assume \con{H3($N$)} and \con{H4($N$)} with $b\ge \max(k,a)$. Let $d_0,\dots, d_N\le -1$ with $d_i\ge \max(\ell \cdot d_{i-1} + b, d_{i-1}+b)$.
If $\ker V_n =0$ for all $n>d_1$ and $\widetilde H_i(\coker V)_n=0$ for all $i\le N-1$ and all $n> d_{i+1}$, then $\widetilde H_i(V)_n = 0$ for all $i\le N-1$ and $n> d_{i+1}+1$.
\end{prop}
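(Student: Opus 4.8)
The plan is to deduce the statement from \autoref{thm:res of finite type} by exhibiting a partial free resolution of $V$ whose $j$-th term is freely generated in ranks $\le d_j+1$. Since $(d_{j+1}+1)-(d_j+1)=d_{j+1}-d_j\ge b\ge\max(k,a)$, such a resolution together with the implication (a)$\Rightarrow$(b) of \autoref{thm:res of finite type} (which uses \con{H3($N$)}) yields at once $\widetilde H_i(V)_n=0$ for all $-1\le i<N$ and all $n>d_{i+1}+1$, which is exactly the assertion.

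I would build the resolution by induction on $j=0,\dots,N$, carrying along a $U\G$-module $V^{(j)}$ (with $V^{(0)}=V$), a surjection $P_j\twoheadrightarrow V^{(j)}$ with $P_j$ freely generated in ranks $\le d_j+1$, and $V^{(j+1)}:=\ker(P_j\to V^{(j)})$, and maintaining two invariants: (i) $\ker V^{(j)}=0$ in ranks $>d_1$, and (ii) $\widetilde H_i(\coker V^{(j)})_n=0$ for all $-1\le i\le N-1-j$ and all $n>d_{i+1+j}$. For $j=0$ these are the hypotheses of the proposition. Given $V^{(j)}$ satisfying (i) and (ii): invariant (ii) at $i=-1$ says $\coker V^{(j)}$ is generated in ranks $\le d_j$, so by \autoref{prop:fgcoker} the module $V^{(j)}$ is generated in ranks $\le d_j+1$, and the Yoneda lemma lets me choose $P_j$ as required.

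To propagate the invariants I would apply the exact functor $T$ to $0\to V^{(j+1)}\to P_j\to V^{(j)}\to 0$, compare with the identity through the natural maps $\id\to T$, and invoke the snake lemma to get
\[0\to \ker V^{(j+1)}\to \ker P_j\to \ker V^{(j)}\to \coker V^{(j+1)}\to \coker P_j\to \coker V^{(j)}\to 0.\]
The structure map of each representable $R\Hom(m,-)$ is a monomorphism (postcomposition by a morphism of $U\G$, which is always monic), so $\ker P_j=0$; with (i) this forces $\ker V^{(j+1)}=0$ and leaves the short exact sequence $0\to\coker V^{(j+1)}\to\coker P_j\to\coker V^{(j)}\to 0$. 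Running the long exact sequence of $\widetilde H_*$ on it, using invariant (ii) for $V^{(j)}$ at index $i+1$ together with \con{H4($N$)} applied to $\coker P_j\cong\bigoplus_l\coker R\Hom(m_l,-)$ with all $m_l\le d_j+1$, gives $\widetilde H_i(\coker V^{(j+1)})_n=0$ for every $n>\max\big(d_{i+j+2},\ \ell(i+d_j+1)+b\big)$ and every $-1\le i\le N-2-j$. The gap estimate $d_{i+j+1}\ge d_j+(i+1)$ (consecutive gaps are $\ge b\ge 1$) combined with the recursion $d_{i+j+2}\ge\ell d_{i+j+1}+b$ shows $\ell(i+d_j+1)+b\le\ell d_{i+j+1}+b\le d_{i+j+2}=d_{(i+1)+(j+1)}$, which is precisely invariant (ii) for $j+1$. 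Carrying this through $j=N$ produces the partial resolution $P_N\to\cdots\to P_0\to V\to 0$ with $P_j$ freely generated in ranks $\le d_j+1$.

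The base case $i=-1$ of the conclusion is absorbed into the construction (it is the assertion that $V$ is generated in ranks $\le d_0+1$, supplied by \autoref{prop:fgcoker}). I expect the only genuine difficulty to be the bookkeeping in the third paragraph: one must see that the rank bound $\ell(i+d_j+1)+b$ forced by \con{H4($N$)} on $\widetilde H_i(\coker V^{(j+1)})$ is no larger than the next prescribed value $d_{(i+1)+(j+1)}$, and this is exactly what the recursion $d_i\ge\max(\ell d_{i-1}+b,\ d_{i-1}+b)$, together with $b\ge\max(k,a)$, is built to guarantee. Verifying that $T$ commutes with $\coker$ and with direct sums and preserves monomorphisms (so that the snake-lemma step is legitimate) is routine, as is the final appeal to \autoref{thm:res of finite type}.
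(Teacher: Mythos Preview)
Your overall strategy matches the paper's: build a partial free resolution $P_N\to\cdots\to P_0\to V\to 0$ with $P_j$ free in ranks $\le d_j+1$, controlling $\coker$ of the successive kernels via the snake lemma for $\id\to T$ together with \con{H4($N$)}, and then read off the conclusion from \autoref{thm:res of finite type}. The paper organizes the same computation as an induction on $N$ (using (b)$\Rightarrow$(a) of \autoref{thm:res of finite type} to produce $P_0,\dots,P_{N-1}$ from the inductive hypothesis, and then checking directly that $K_{N-1}$ is generated in ranks $\le d_N+1$), whereas you induct on the resolution length $j$ maintaining your invariants (i) and (ii); the two bookkeepings are interchangeable and your numerics are correct.

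There is, however, a genuine slip in your propagation step at $j=0$. For $j\ge 1$ the module $V^{(j)}$ sits inside $P_{j-1}$, so $\ker V^{(j)}=0$ \emph{everywhere} and your short exact sequence $0\to\coker V^{(j+1)}\to\coker P_j\to\coker V^{(j)}\to 0$ is valid as stated. But for $j=0$ you only know $\ker V_n=0$ for $n>d_1$; the six-term snake sequence then yields the four-term exact sequence
\[0\longrightarrow \ker V\longrightarrow\coker V^{(1)}\longrightarrow\coker P_0\longrightarrow\coker V\longrightarrow 0,\]
so $\coker V^{(1)}\to\coker P_0$ is not injective in low ranks, and the long exact sequence in $\widetilde H_*$ cannot be run on a short exact sequence that does not exist there. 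The paper handles this by passing to $\coker V^{(1)}/\ker V$ (which does sit in a short exact sequence with $\coker P_0$ and $\coker V$) and then observing that $(\widetilde C_i\ker V)_n=\Ind_{G_{n-i-1}}^{G_n}\ker V_{n-i-1}=0$ for $n>d_1+i+1$, so that $\widetilde H_i(\coker V^{(1)})_n\hookrightarrow\widetilde H_i(\coker V^{(1)}/\ker V)_n$ in that range. Since $d_{i+2}-d_1\ge (i+1)b\ge i+1$, the extra bound $d_1+i+1$ is already dominated by your target $d_{i+2}$, and your invariant (ii) at $j=1$ follows. With this one adjustment your argument is complete.
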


\begin{proof}
The case $N= 0$ is proved in  \autoref{prop:fgcoker}.

Let $N\ge 1$. By induction
\[ \widetilde H_i(V)_n = 0 \]
for all $n> d_{i+1}+1$ for $i<N-1$.  \autoref{thm:res of finite type} implies the existence of a resolution
\[P_{N-1} \longrightarrow \dots \longrightarrow P_0 \longrightarrow V \longrightarrow 0\]
by $U\G$--modules $P_i$ that are freely generated in ranks $\le d_i+1$, because $d_i - d_{i-1} \ge b \ge \max(k,a)$. 
If we can prove that $K_{N-1} = \ker( P_{N-1} \to P_{N-2})$ is  generated in ranks $\le d_N+1$, we are done. Let 
\[ K_j = \ker( P_j \to P_{j-1})\]
with the convention $P_{-1} = K_{-1} = V$. Because $T$ is exact, the snake lemma gives us the exact sequence
\[ 0= \ker P_{i+1}\longrightarrow \ker K_{i} \longrightarrow  \coker K_{i+1} \longrightarrow \coker P_{i+1} \longrightarrow \coker K_{i} \longrightarrow 0.\]

If $i\ge0$, $\ker K_{i}$ is a submodule of $\ker P_{i}=0$. %Otherwise $\ker V =0$ by assumption.
Therefore there is a long exact sequence 
\[ \cdots \to  \widetilde H_{N-i-2}(\coker K_{i})_n \to  \widetilde H_{N-i-3}(\coker K_{i+1})_n  \to  \widetilde H_{N-i-3}(\coker P_{i+1})_n \to \cdots.\] 
By the  assumption on $\widetilde H_{N-i-3}(\coker P_{i+1})_n$, we see that
\[ \widetilde H_{N-i-2}(\coker K_i)_n \surject  \widetilde H_{N-i-3}(\coker K_{i+1})_n\]
is surjective for all $i\le N-2$ and all $n>\ell\cdot (d_{i+1}+1+{N-i-3})+b$.

If $i=-1$, ie $\ker K_i = \ker V$, this method only infers
\[ \widetilde H_{N-1}(\coker V)_n \surject  \widetilde H_{N-2}(\coker K_0/ \ker V)_n\]
is surjective for all $n>\ell\cdot (d_{0}+N-1)+b$. Because $\ker V_n = 0 $ for all $n > d_1$,
\[ (\widetilde C_{N-2}\ker V)_n \cong \Ind_{G_{n-N+1}}^{G_n} \ker V_{n-N+1} = 0\]
for all $n >d_1+N-1$.
Therefore in the same range
\[ \widetilde H_{N-2}(\coker K_0)_n \inject \widetilde H_{N-2}(\coker K_0/ \ker V)_n\]
is injective.

Thus the assumption that $\widetilde H_{N-1}(\coker V)_n= 0 $ for all  $n> d_{N}$ implies 
\[ \widetilde H_{-1}(\coker K_{N-1})_n=0 \] 
in the same range because
\[ d_N \ge \ell^{N-i-1}\cdot d_{i+1} + (1+\dots+ \ell^{N-i-2})b \ge  \ell\cdot (d_{i+1}+N-i-2)+b \]
and
\[ d_N \ge \ell^{N-1}d_{1} + (1+\dots+ \ell^{N-2})b \ge d_1 + N-1 \ge \ell \cdot d_0 +b +N -1.\]

By  \autoref{prop:fgcoker}, $K_{N-1}$ is generated in ranks $\le d_N +1$.
\end{proof}

A direct consequence of this proposition is \autoref{thmB}:

\begin{cor}\label{quantThmB}
Assume \con{H3($\infty$)} and \con{H4($\infty$)} with $b\ge \max(k,a)$. Let $r\ge1$, $d\ge-1$ and let $V$ be of  polynomial degree $\le r$ in ranks $>d$, then $\widetilde H_iV_n = 0$ for all $i\ge -1$ and $n> \ell^{i+1}(d+r) + (\ell^i + \dots + 1)b+1$.
\end{cor}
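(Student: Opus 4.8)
The plan is to induct on the polynomial degree $r$, using \autoref{prop:H(coker V)} for the inductive step and the finiteness results of \autoref{sec:finprop} for the base case. Throughout, write $f_r(d,i):=\ell^{i+1}(d+r)+(\ell^i+\dots+1)b+1$, so that the assertion to be proved reads $\widetilde H_i(V)_n=0$ for all $i\ge -1$ and $n>f_r(d,i)$. Observe first that \con{H3($\infty$)} and \con{H4($\infty$)} imply \con{H3($N$)} and \con{H4($N$)} for every finite $N$, so both \autoref{thm:res of finite type} and \autoref{prop:H(coker V)} are applicable below (and I may let $N\to\infty$ at the end to cover all $i$).

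For the base case I treat the situation where $V$ has polynomial degree $\le 0$ in ranks $>d$. By \autoref{Def:polynomial degree} this gives $\ker V_n=0$ and $(\coker V)_n=0$ for all $n>d$. If $d\le -1$, then $(\coker V)_n=0$ for every $n\in\NN$, so $\coker V=0$, and \autoref{prop:fgcoker} (applied with rank parameter $-1$) forces $V$ to be generated in ranks $\le -1$, i.e.\ $V=0$, in which case the assertion is vacuous. If $d\ge 0$, the final corollary of \autoref{sec:finprop} supplies, for each $N$, a resolution of $\coker V$ by modules freely generated in ranks $\le \max(k,a)\cdot i+d\le b\cdot i+d$, so \autoref{thm:res of finite type} (direction (a)$\Rightarrow$(b), with thresholds $b\cdot i+d$, legitimate because $b\ge\max(k,a)$) gives $\widetilde H_i(\coker V)_n=0$ for all $n>b(i+1)+d$. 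I then invoke \autoref{prop:H(coker V)} with the sequence $d_0:=d$ and $d_j:=\ell^j d+(\ell^{j-1}+\dots+1)b$ for $j\ge 1$: a direct check gives $d_i=\ell d_{i-1}+b$ for $i\ge 1$, hence the recursion $d_i\ge\max(\ell d_{i-1}+b,\,d_{i-1}+b)$; moreover every $d_j\ge -1$, $d_1\ge d$, and $d_{i+1}\ge b(i+1)+d$ since $d\ge 0$. The proposition then yields $\widetilde H_i(V)_n=0$ for all $n>d_{i+1}+1=f_0(d,i)$.

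For the inductive step, fix $r\ge 1$ and assume the statement in every polynomial degree $\le r-1$. If $V$ has polynomial degree $\le r$ in ranks $>d$, then $\ker V_n=0$ for $n>d$ and $\coker V$ has polynomial degree $\le r-1$ in ranks $>d$, so the inductive hypothesis gives $\widetilde H_i(\coker V)_n=0$ for $n>f_{r-1}(d,i)$. I now apply \autoref{prop:H(coker V)} with $d_0:=d+r$ and $d_j:=\ell^j(d+r)+(\ell^{j-1}+\dots+1)b$ for $j\ge 1$. A short computation shows $d_i=\ell d_{i-1}+b$ for all $i\ge 1$, so the recursion hypothesis holds; since $d+r\ge 0$ (because $d\ge -1$ and $r\ge 1$) every $d_j\ge -1$ and $d_1\ge d$; and $d_{i+1}-f_{r-1}(d,i)=\ell^{i+1}-1\ge 0$, so the hypothesis on $\coker V$ is satisfied. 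The proposition now delivers $\widetilde H_i(V)_n=0$ for all $n>d_{i+1}+1=\ell^{i+1}(d+r)+(\ell^i+\dots+1)b+1=f_r(d,i)$, which closes the induction.

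All the substantive work sits in \autoref{prop:H(coker V)}; the only point needing care here is the bookkeeping of the threshold sequence fed into it. That sequence must at once (i) obey the recursion $d_i\ge\ell d_{i-1}+b$, (ii) dominate the inductive bound $f_{r-1}(d,i)$ coming from $\coker V$, and (iii) be small enough that the proposition's output $d_{i+1}+1$ equals the target $f_r(d,i)$ exactly — and the closed form $\ell^j(d+r)+(\ell^{j-1}+\dots+1)b$ is precisely what satisfies all three. The one genuinely non-mechanical point is the degenerate case in which $d+r$ is so small that $f_r(d,i)$ becomes vacuous; this is absorbed by the observation that $V$ (or $\coker V$) is then forced to vanish, which is handled in the base case above.
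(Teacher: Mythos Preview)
Your overall strategy---induction on $r$ via \autoref{prop:H(coker V)}---is exactly the paper's approach, and your inductive step for $r\ge 1$ is correct and matches the paper's bookkeeping. The base case, however, has a genuine error when $d=-1$.

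You claim that if $V$ has polynomial degree $\le 0$ in ranks $>-1$, then $V=0$, invoking \autoref{prop:fgcoker} with ``rank parameter $-1$''. But that proposition is stated for $d\in\NN$, and its conclusion fails for $d=-1$: the constant module $V=R\Hom(0,-)$ has $\ker V=\coker V=0$ yet is certainly nonzero. The proof of \autoref{prop:fgcoker} only yields $(\widetilde H_{-1}V)_n=0$ for $n\ge 1$; at $n=0$ one has $(\widetilde H_{-1}V)_0=V_0$, which need not vanish. So your base case for $d=-1$ is false, and this propagates to the $r=1$, $d=-1$ step of the induction.

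The fix is to do what the paper does: when $d=-1$, apply \autoref{prop:H(coker V)} directly (the hypotheses are trivially satisfied since $\coker V=0$) with the modified sequence $d_0^0=-1$ and $d_i^0=-\ell^{i-1}+(\ell^{i-1}+\dots+1)b$ for $i\ge 1$, obtaining $\widetilde H_i(V)_n=0$ for $n>d_{i+1}^0+1$. This bound is weaker than $f_0(-1,i)$, but one checks that $d_{i+1}^1\ge d_{i+1}^0+1$, which is all the inductive step needs. Note also that your treatment of the $d\ge 0$ base case is correct but takes an unnecessary detour through resolutions: since $(\coker V)_n=0$ for $n>d$, the chain group $\widetilde C_i(\coker V)_n\cong\Ind_{G_{n-i-1}}^{G_n}(\coker V)_{n-i-1}$ already vanishes for $n>d+i+1$.
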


\begin{proof}
For $r= 0$, $\ker V_n =\coker V_n = 0$ for all $n > d$. Then $\widetilde H_i(\coker V)_n =0$ for all $n> d+i+1$.
Let  $d_0^0 = d$ and for $i>0$ let
\[ d^0_i =  \ell^i \cdot d + (\ell^{i-1} + \dots + 1)b\]
if $d\ge 0$ and
\[ d^0_i =  -\ell^{i-1}+ (\ell^{i-1} + \dots + 1)b\]
if $d=-1$. Then $d_i^0\ge \max(\ell \cdot d_{i-1}^0 +b,d_{i-1}^0+b)$, $\ker V_n =0$ for all
\[ n > d^0_1 \ge d,\]
and $\widetilde H_i(\coker V)_n = 0$ for all 
\[n>d^0_{i+1}  \ge   d+i+1.\] 
Therefore by  \autoref{prop:H(coker V)},
\[ \widetilde H_iV_n = 0\]
for all $n > d^0_{i+1} +1$. This proves the case $r=0$.

Let $r>0$ and define
\[d^r_i =\ell^{i}(d_0^0+r) + (\ell^{i-1} + \dots + 1)b .\]
By induction $\widetilde H_i\coker V_n = 0$ for all
\[ n > d^r_{i+1}  \ge d^{r-1}_{i+1}+1.\]
We also have that
\[ d^r_i \ge \ell d^r_{i-1} +b\]
and 
\[ \ker V_n= 0\]
for all
\[ n>  d_1^r \ge  d. \]
Thus we can apply  \autoref{prop:H(coker V)} to get
\[ \widetilde H_iV_n = 0\]
for all $n > d^r_{i+1} +1$. This proves the case $r>0$.
\end{proof}

From \con{H3($N$)} and a long exact sequence we get that
\[ \widetilde H_i(T\Hom(m,-))_n \surject \widetilde H_i(\coker\Hom(m,-))_n\]
is surjective for all $i\le N-1$ and all $n>k\cdot (i-1) + a + m$. With the next proposition we will express $K_\bullet T\Hom(m,-)_n$ in form of the semisimplicial sets $K_\bullet\Hom(0,-)_n$ that are analyzed in \cite{RW}. 

%\begin{Def}
%Let $K_\bullet$ be an augmented semisimplicial set and $\sigma \in K_p$. We define the augmented semisimplicial set $\Lk_{K_\bullet} \sigma$ and call it the \emph{link} of $\sigma$ in $K_\bullet$. A simplex $\tau \in K_q$ is in the link if there exists a simplex $\rho\in K_{p+q+1}$ %and a partition of
%%\[ [p+q] = \{ i_1 < \dots < i_p\} \sqcup \{ j_1 <\dots < j_q\} \]
%such that
%\[ \partial_{0} \cdots \partial_{p} \rho = \tau \quad\text{and}\quad \partial_{p+1} \cdots \partial_{p+q+1} \rho = \sigma.\]
%\end{Def}

\begin{prop}
The augmented semisimplicial set $K_\bullet T\Hom( m,-)_n$ is isomorphic to the disjoint union of augmented semisimplicial sets
\[ \coprod_{\tau \in \Hom(m,n+1)} \Lk^R\tau \cap K_\bullet\Hom(0,-)_n\]
with the embedding of $K_\bullet\Hom(0,-)_n \subset K_\bullet\Hom(0,-)_{n+1}$ by $\sigma \mapsto (\iota_1 \oplus \id_n) \circ \sigma$.
\end{prop}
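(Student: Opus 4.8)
The plan is to apply \autoref{prop:injFunctors} to the $U\G$--set $F=T\Hom(m,-)$ and then to identify each summand of the resulting disjoint union with the corresponding link by means of \autoref{lem:link}.

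First I would check that $T\Hom(m,-)\colon U\G\to\Set$ preserves monomorphisms: it sends the object $n$ to $\Hom(m,1\oplus n)$ and a morphism $\psi$ to postcomposition with $\id_1\oplus\psi$, which is injective since $\id_1\oplus\psi$ is a monomorphism in the weakly complemented category $U\G$. Hence \autoref{prop:injFunctors} yields a splitting of augmented semisimplicial sets
\[ K_\bullet T\Hom(m,-)_n\;\cong\;\coprod_{\tau\in\Hom(m,n+1)}L^\tau_\bullet, \]
with $L^\tau_\bullet\subset K_\bullet\Hom(0,-)_n$ and $L^\tau_p=\{\sigma\in K_p\Hom(0,-)_n\mid\tau\in\im T\Hom(m,-)(\sigma')\}$, where $\sigma'=f\circ(\id_{n-p-1}\oplus\iota_{p+1})$ is the complement of $\sigma=fG_{n-p-1}$. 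Since $T$ is precomposition with $1\oplus-$, the map $T\Hom(m,-)(\sigma')$ is just postcomposition with $\id_1\oplus\sigma'\colon n-p\to n+1$; thus $\sigma\in L^\tau_p$ precisely when $\tau$ factors through $\id_1\oplus\sigma'$.

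Next I would compute the image $\bar\sigma:=(\iota_1\oplus\id_n)\circ\sigma$ of $\sigma$ under the embedding $K_\bullet\Hom(0,-)_n\hookrightarrow K_\bullet\Hom(0,-)_{n+1}$. Writing $\sigma=fG_{n-p-1}$ with $f\colon(n-p-1)\oplus(p+1)\to n$ an isomorphism, a short computation using the interchange law and the initiality of $0$ gives $\bar\sigma=(\id_1\oplus f)\,G_{n-p}$ as a $p$--simplex of $K_\bullet\Hom(0,-)_{n+1}$, and therefore that the complement of $\bar\sigma$ equals $\id_1\oplus\sigma'\colon n-p\to n+1$. Applying the $\Lk^L$ assertion of \autoref{lem:link} at level $n+1$ to the simplex $\bar\sigma$, a morphism $\tau\colon m\to n+1$ lies in $\Lk^L\bar\sigma$ if and only if it factors through the complement $\id_1\oplus\sigma'$ of $\bar\sigma$; and by the definition of a join, $\tau\in\Lk^L\bar\sigma$ is equivalent to $\bar\sigma\in\Lk^R\tau$. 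Combining these, $\sigma\in L^\tau_p$ if and only if $\bar\sigma\in\Lk^R\tau$, so $\sigma\mapsto\bar\sigma$ carries $L^\tau_p$ bijectively onto $\Lk^R\tau\cap K_p\Hom(0,-)_n$, the intersection being formed inside $K_\bullet\Hom(0,-)_{n+1}$.

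Finally, since the embedding $K_\bullet\Hom(0,-)_n\hookrightarrow K_\bullet\Hom(0,-)_{n+1}$ is a map of augmented semisimplicial sets and both $L^\tau_\bullet$ and $\Lk^R\tau$ are sub-semisimplicial sets of their ambient objects, these level-wise bijections assemble into the claimed isomorphism of augmented semisimplicial sets (at level $p=-1$ it is the tautological identification $\Hom(m,n+1)=\Hom(m,n+1)$). I expect the only genuinely fiddly step to be the bookkeeping in the third paragraph: tracking the (strict) associativity isomorphisms and verifying that the complement of $\bar\sigma$ is $\id_1\oplus\sigma'$ and not some braided or reordered variant. Everything else is formal once \autoref{prop:injFunctors} and \autoref{lem:link} are in hand.
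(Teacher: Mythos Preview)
Your proposal is correct and follows essentially the same route as the paper: apply \autoref{prop:injFunctors} to $T\Hom(m,-)$, observe that the complement of $\bar\sigma=(\iota_1\oplus\id_n)\circ\sigma$ in $K_\bullet\Hom(0,-)_{n+1}$ is $\id_1\oplus\sigma'$, and then invoke \autoref{lem:link} to identify $L^\tau_\bullet$ with $\Lk^R\tau\cap K_\bullet\Hom(0,-)_n$. The paper's proof is terser but makes exactly the same moves, including the key identity $\id_1\oplus\sigma'=\big((\iota_1\oplus\id_n)\circ\sigma\big)'$ that you single out as the fiddly step.
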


\begin{proof}
From \autoref{prop:injFunctors} we know that $K_p T\Hom({ m},-)_n$ is isomorphic to
\[  \coprod_{\tau \in \Hom(m,n+1)} L_p^\tau\]
with
\[ L_p^\tau = \{ \sigma \in K_p\Hom(0,-)_n\mid \tau \in \im T\Hom(m,-)(\sigma')\}\]
where $\sigma' = f\circ (\id_{n-p-1}\oplus \iota_{p+1})$ is a complement of $\sigma = fG_{n-p-1}$. The equation
\[ \id_1 \oplus \sigma' = \Big((\iota_1 \oplus \id_n)\circ \sigma\Big)'\]
and 
\[ \tau \in \im T\Hom(m,-)(\sigma') \iff \exists \tau'\colon m \to 1 \oplus (n-p-1)\text{ such that } \tau = (\id_1\oplus \sigma')\circ \tau',\]
together with  \autoref{lem:link} gives that $\tau \in \Lk^L((\iota_1\oplus \id_n)\circ \sigma)$, hence
 \[ L_\bullet^\tau = \Lk^R\tau \cap K_\bullet\Hom(0,-)_{n}.\qedhere\]
\end{proof}

\begin{ex}\label{ex:Ubeta}
Here we want to provide an example of a stability category that satisfies \con{H3($\infty$)} but does not even satisfy \con{H4($0$)}. We will explain why $T\Hom_{U\beta}(1,-)$ is not generated in finite ranks. Let $\sigma = b\beta_{n} \in T\Hom_{U\beta}(1,-)_n = \Hom_{U\beta}(1,1 \oplus n)$ which is the coset that is represented by a braid $b\in \beta_{n+1}$. Now 
\[ \phi(\sigma) = (\id_1 \oplus \iota_1 \oplus \id_{n}) \circ \sigma = (b_{1,1}^{-1} \oplus \id_n) \circ (\id_1 \oplus b) \beta_{n+1},\]
which is depicted in the following diagram.
\[  (b_{1,1}^{-1} \oplus \id_n) \circ (\id_1 \oplus b) = \quad 
\begin{tikzpicture}[x=.5cm,y=-1.5cm,baseline=-2cm]

\node[v]  at (0,0) {};
\node[v]  at (1,0) {};
\node[v]  at (2,0) {};
\node at (3.5,0) {$\cdots$};
\node[v]  at (5,0) {};
\node[v]  at (6,0) {};
\node[yshift=12pt] at (6,0) {\scalebox{.8}{$1$}};

\node at (3.5,.5) {$b$};

\node[v]  at (0,1) {};
\node[v]  at (1,1) {};
\node[v]  at (2,1) {};
\node at (3.5,1) {$\cdots$};
\node[v]  at (5,1) {};
\node[v]  at (6,1) {};

\draw (.7,0) to (6.3,0) to (6.3,1) to (.7,1) to (.7,0);

\draw [decorate,decoration={brace,amplitude=4pt,raise=4pt},yshift=0pt]
(0,0) -- (5,0) node [black,midway,yshift=20pt]  {\scalebox{.8}{$\beta_{n+1}$}};
\node[yshift=12pt] at (2.5,0) {\scalebox{.8}{$\lefttorightarrow$}};

\draw[very thick] (0,0) to (0,1);

\node at (3,1.25) {$\circ$};

\draw[very thick] (1,1.5) to[out=270, in=90] (0,2.5);
\draw[white , double=black , very thick , double distance =1.3pt] (0,1.5) to[out=270, in=90] (1,2.5);
\draw[very thick] (2,1.5) to[out=270, in=90] (2,2.5);
\draw[very thick] (5,1.5) to[out=270, in=90] (5,2.5);
\draw[very thick] (6,1.5) to[out=270, in=90] (6,2.5);

\node[v]  at (0,1.5) {};
\node[v]  at (1,1.5) {};
\node[v]  at (2,1.5) {};
\node at (3.5,1.5) {$\cdots$};
\node[v]  at (5,1.5) {};
\node[v]  at (6,1.5) {};

\node[v] (a) at (0,2.5) {};
\node[v]  at (1,2.5) {};
\node[v]  at (2,2.5) {};
\node at (3.5,2.5) {$\cdots$};
\node[v]  at (5,2.5) {};
\node[v]  at (6,2.5) {};

\draw [decorate,decoration={brace,amplitude=4pt,raise=4pt},yshift=0pt]
(6,2.5) -- (1,2.5) node [black,midway,yshift=-12pt] {\scalebox{.8}{$\righttoleftarrow$}};
\node[yshift=-20pt] at (3.5,2.5) {\scalebox{.8}{$\beta_{n+1}$}};
\end{tikzpicture}
\]
Our claim, that $T\Hom_{U\beta}(1,-)$ is not generated in finite ranks, amounts to showing
\[ (1\times \beta_{n+1} )  (b_{1,1}^{-1} \oplus \id_n)  (1\times \beta_{n+1}) (\beta_{n+1} \times 1) \neq \beta_{n+2}.\]
This follows because the braid
\[\begin{tikzpicture}[x=.5cm,y=-1.5cm,baseline=-2cm]
\draw[very thick] (0,0) to (.33,.7);
\draw[very thick] (1,0) to (1.33,.7);
\draw[very thick] (4,0) to (4.33,.7);
\draw[very thick] (5,0) to (5.33,.7);
\draw[white , double=black , very thick , double distance =1.3pt] (6,0) to[out=270, in=90] (0,.7);
\draw[very thick] (0,.7) to[out=270, in=90] (6,1.4);
\draw[white , double=black , very thick , double distance =1.3pt] (.33,.7) to (.67,1.4);
\draw[white , double=black , very thick , double distance =1.3pt] (1.33,.7) to (1.67,1.4);
\draw[white , double=black , very thick , double distance =1.3pt] (4.33,.7) to (4.67,1.4);
\draw[white , double=black , very thick , double distance =1.3pt] (5.33,.7) to (5.67,1.4);
\draw[very thick] (.67,1.4) to (1,2.1);
\draw[very thick] (1.67,1.4) to (2,2.1);
\draw[very thick] (4.67,1.4) to (5,2.1);
\draw[very thick] (5.67,1.4) to (6,2.1);
\draw[white , double=black , very thick , double distance =1.3pt] (6,1.4) to[out=270, in=90] (0,2.1);

\node[v]  at (0,0) {};
\node[v]  at (1,0) {};
\node[v]  at (4,0) {};
\node at (2.5,0) {$\cdots$};
\node[v]  at (5,0) {};
\node[v]  at (6,0) {};

\node[v]  at (0,2.1) {};
\node[v]  at (1,2.1) {};
\node[v]  at (2,2.1) {};
\node at (3.5,2.1) {$\cdots$};
\node[v]  at (5,2.1) {};
\node[v]  at (6,2.1) {};
\end{tikzpicture}
\]
is not contained in the LHS.
\end{ex}

\section{Short exact sequences}\label{sec:stabSES}

In this section will deal with short exact sequences of groups and stability groupoids. We restate some well-known properties of group homology regarding these short exact sequences in the language of modules over stability categories. Most of the arguments of this section have already appeared in \cite{PS} but we hope that they become more accessible in the language of this paper.

Let
\[ 1 \longrightarrow N \longrightarrow G \longrightarrow Q \longrightarrow 1\]
be a short exact sequence of groups. We want to describe the group homology of $N$ as a $Q$--module. For this let us first review how an automorphism $\phi \in \Aut(N)$ acts on $H_i(N;M)$ for some $RN$--module $M$ (given an $RN$--homomorphism $\psi\colon M \to \Res_\phi M$, ie $\psi(n\cdot m) = \phi(n) \cdot \psi(m)$.) Let $E_*N$ be a free (right) $RN$--resolution of the trivial representation $R$. Let $\xi\colon E_*N \to \Res_\phi E_*N$ be an $RN$--homomorphism. (This can always be found and any two are chain homotopic.) Then $\phi$ induces the map
\[ E_*N \otimes_N M \stackrel{\xi \otimes \psi}{\longrightarrow} \Res_\phi E_*N\otimes_N \Res_\phi M.\]
Note that the RHS is canonically isomorphic to $E_*N \otimes_N M$ because $\phi$ is a bijection.

Let $\phi$ be an inner automorphism of $N$, say conjugation by $n\in N$. Then $\xi(x) = xn^{-1}$ and $\psi(m) = nm$ fulfill the above requirements. But then $\xi \otimes \psi$ is in fact the identity on $E_*N \otimes_N M$. 

Let $E_*G$ be a free (right) $RG$--resolution of the trivial representation $R$. Assume $M$ is the restriction of an $RG$--module and $\phi$ the conjugation by an element $g\in G$. ($N$ is normal in $G$.) Then $\xi(x) = xg^{-1}$ and $\psi(m) = gm$ fulfill the above requirements as before. Note that $\xi\otimes \psi$ is not the identity on $E_*G \otimes_N M$. Summarizing, we have seen that $E_*G\otimes_N M$ is an $RQ$--module by
\[ Ng \cdot (x\otimes m) = xg^{-1} \otimes gm.\]
This  induces an $RQ$--module structure on $H_*(N;M)$ for every $RG$--module $M$.

Now we generalize this concept. 

\noindent\fbox{\parbox[b]{\textwidth}{\textit{In this section, we deal with multiple stability groupoids $\mathcal N = (N_n)_{n\in\NN}, \G= (G_n)_{n\in\NN}, \mathcal Q=(Q_n)_{n\in\NN}$.}

\textit{For the most time we assume that $\G$ and $\mathcal Q$ are braided, in which their stability categories $U\G$ and $U\mathcal Q$ are monoidal and prebraided, whence it makes sense to take of their central stability complex and homology, which we denote by $\widetilde C_*^\G, \widetilde H_*^\G$ and $\widetilde C_*^{\mathcal Q}, \widetilde H_*^{\mathcal Q}$, respectively.}}}

\begin{Def}\label{Def:stability SES}
Let $F\colon \mathcal N \to \mathcal G$ and $F'\colon \mathcal G \to \mathcal Q$ be homomorphisms of stability groupoids. We call this data a \emph{stability SES} if
\[ 1 \longrightarrow N_n  \longrightarrow G_n \longrightarrow  Q_n \longrightarrow 1 \]
is a short exact sequence for all $n\in \NN$.
\end{Def}

\begin{prop}\label{prop:criterion stab SES}
Let $\mathcal G, \mathcal Q$ be stability groupoids and $F'\colon \mathcal G \to \mathcal Q$ a homomorphism of stability groupoids and $F'_n \colon G_n \to Q_n$ is surjective for every $n\in \mathbb N$. Then there is a stability groupoid $\mathcal N$ and a homomorphism of stability groupoids $F\colon \mathcal N \to \mathcal G$ such that
\[ \mathcal N  \stackrel{F}{\longrightarrow}  \mathcal G  \stackrel{F'}{\longrightarrow} \mathcal Q \]
is a stability SES.
\end{prop}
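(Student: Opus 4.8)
The plan is to construct $\mathcal N$ levelwise as the kernels $N_n := \ker(F'_n \colon G_n \to Q_n)$, equip this sequence of groups with a monoidal structure inherited from $\mathcal G$, and verify the three axioms of \autoref{Def:stability groupoid}. First I would note that because $F'$ is a homomorphism of stability groupoids, the maps $F'_m \times F'_n$ and the monoidal products fit into a commuting square
\[ \xymatrix{ G_m\times G_n \ar[r]^-{\oplus} \ar[d]_{F'_m\times F'_n} & G_{m+n} \ar[d]^{F'_{m+n}} \\ Q_m\times Q_n \ar[r]_-{\oplus} & Q_{m+n} } \]
so that $\oplus$ sends $N_m\times N_n$ into $N_{m+n}$; this is the monoidal structure $\oplus\colon N_m\times N_n \to N_{m+n}$. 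Associativity and unitality of this structure are inherited from $\mathcal G$ since the $N_n$ are subgroups of the $G_n$ and all structure isomorphisms of $\mathcal G$ restrict. The functor $F\colon \mathcal N \to \mathcal G$ is then the inclusion on objects (the identity on $\NN$) and the subgroup inclusions $N_n\hookrightarrow G_n$ on automorphisms, which is visibly a monoidal functor sending $1$ to $1$, hence a homomorphism of stability groupoids once we know $\mathcal N$ is one. Finally, by construction $1\to N_n\to G_n\to Q_n\to 1$ is exact for every $n$ (using that $F'_n$ is surjective), so the data $\mathcal N\xrightarrow{F}\mathcal G\xrightarrow{F'}\mathcal Q$ is a stability SES in the sense of \autoref{Def:stability SES}.

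It remains to check the three axioms for $\mathcal N$. Axiom (1), injectivity of $\oplus\colon N_m\times N_n\to N_{m+n}$, is immediate: this map is the restriction of the injective map $\oplus\colon G_m\times G_n\to G_{m+n}$. Axiom (2), triviality of $N_0$, follows from $G_0$ being trivial (axiom (2) for $\mathcal G$), since $N_0\subseteq G_0$. Axiom (3) is the only one requiring a small argument: I would compute
\[ (N_{l+m}\times 1)\cap(1\times N_{m+n}) \subseteq (G_{l+m}\times 1)\cap(1\times G_{m+n}) = 1\times G_m\times 1 \]
inside $G_{l+m+n}$, using axiom (3) for $\mathcal G$, and conversely observe that any element of $1\times N_m\times 1$ lies in both $N_{l+m}\times 1$ and $1\times N_{m+n}$ (as $N_m\subseteq N_{l+m}$ and $N_m\subseteq N_{m+n}$ via the monoidal structure). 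Intersecting the displayed containment with $N_{l+m+n}$ — or equivalently noting that an element of $1\times G_m\times 1$ which lies in $N_{l+m+n}$ must have middle coordinate in $N_m$ — gives $(N_{l+m}\times 1)\cap(1\times N_{m+n}) = 1\times N_m\times 1$ as desired.

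There is no serious obstacle here; the statement is essentially a bookkeeping exercise confirming that kernels of a homomorphism of stability groupoids again form a stability groupoid. The only point that deserves care is axiom (3), where one must be slightly attentive that the intersection identity for $\mathcal G$ restricts correctly to the subgroups $N_\bullet$ — but this is routine once one writes the inclusions of subgroups explicitly. (Note that the proposition does not assert $\mathcal N$ is braided, and indeed in the examples of \autoref{exs:stabilitySES} it typically is not, so no braiding needs to be produced.)
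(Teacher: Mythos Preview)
Your proposal is correct and follows the same approach as the paper: define $N_n=\ker F'_n$, observe that the monoidal structure of $\mathcal G$ restricts to the kernels via the commuting square, and verify the stability groupoid axioms. In fact you are more thorough than the paper, which only explicitly checks that $\oplus\colon N_m\times N_n\to N_{m+n}$ is injective and declares the remaining axioms immediate, whereas you spell out axioms (2) and (3) as well.
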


\begin{proof}
Let $N_n$ be the kernel of $F'_n$ and $F_n$ the embedding of $N_n$ in $G_n$. Let $\mathcal N$ be the groupoid formed by these groups and $F$ the functor given by all $F_n$'s. Because
\[ \ker(F'_{m}\times F'_{n}) = N_{m} \times N_{n}\]
we get an injective group homomorphism
\[ N_m \times N_{n} \inject N_{m+n}.\]
This proves that $\mathcal N$ is a stability groupoid. All other assertions are immediate.
\end{proof}

\begin{lem}
Let
\[ 1 \longrightarrow \mathcal N \longrightarrow \mathcal G \longrightarrow \mathcal Q \longrightarrow 1 \]
be a stability SES and $V$ a $U{\mathcal G}$--module. Then for every $i \ge0$ there is a $U{\mathcal Q}$--module $W$ with
\[ W_n \cong H_i (N_n; \Res_{N_n}^{G_n} V_n).\]
We denote this $U{\mathcal Q}$--module by $H_i(\mathcal N; V)$ and abbreviate $H_i(\mathcal N)$ if $V=R\Hom(0,-)$. 
\end{lem}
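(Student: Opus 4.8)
The plan is to define the $U\mathcal Q$-module $W = H_i(\mathcal N;V)$ by a sequence-plus-maps construction and then verify the criterion of \autoref{lem:Cmod criterion}. First I would fix once and for all, for each $n$, a free (right) $RG_n$-resolution $E_\bullet G_n$ of the trivial module $R$, chosen compatibly with the monoidal structure: since $\mathcal N,\mathcal G,\mathcal Q$ are stability groupoids, one can arrange that $E_\bullet G_m \otimes_R E_\bullet G_n$ maps to $E_\bullet G_{m+n}$ via the inclusion $G_m\times G_n \hookrightarrow G_{m+n}$ (a standard bar-type construction works, using that $\oplus$ is injective on automorphism groups). Then, following the discussion preceding the lemma, $E_\bullet G_n \otimes_{RN_n} \Res_{N_n}^{G_n}V_n$ carries an $RQ_n$-module structure via $N_n g\cdot(x\otimes v) = xg^{-1}\otimes gv$, and its $i$-th homology is an $RQ_n$-module canonically isomorphic to $H_i(N_n;\Res V_n)$. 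Set $W_n := H_i(N_n;\Res_{N_n}^{G_n}V_n)$ with this $Q_n$-action.

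Next I would produce the structure maps $\phi_n\colon W_n \to W_{n+1}$. The morphism $\iota_1\oplus\id_n\colon n\to 1\oplus n$ in $U\G$ gives an $RG_n$-map $V_n \to \Res^{G_{n+1}}_{G_n}V_{n+1}$ (via the embedding $G_n \cong 1\times G_n \hookrightarrow G_{n+1}$), and restricting further to $N_n \hookrightarrow N_{n+1}$ together with the compatibility of the chosen resolutions gives a chain map
\[ E_\bullet G_n \otimes_{RN_n} \Res V_n \longrightarrow E_\bullet G_{n+1}\otimes_{RN_{n+1}} \Res V_{n+1}, \]
which is $RQ_n$-equivariant along $Q_n \hookrightarrow Q_{n+1}$ by the explicit formula for the action. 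Passing to $H_i$ defines $\phi_n$, visibly $Q_n$-equivariant. I would also record here that, independence of the choice of $\xi$ up to chain homotopy makes $\phi_n$ well-defined and the assignment functorial.

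Finally, to apply \autoref{lem:Cmod criterion} I must check that for all $m\le n$ and all $g\in N_{n-m}$ — wait, the group acting is $Q_{n-m}$ — that $(g\oplus\id_m)\circ\phi_{m,n} = \phi_{m,n}$ where $\phi_{m,n} = \phi_{n-1}\circ\cdots\circ\phi_m$. The composite $\phi_{m,n}$ is induced by $\iota_{n-m}\oplus\id_m\colon m\to (n-m)\oplus m$ in $U\G$, hence on $V$ by the element realizing this morphism; the claim is exactly that postcomposing with $g\oplus\id_m$ for $g\in Q_{n-m}$ does not change the induced map on homology. Upstairs this is the statement that conjugation/translation by a lift $\tilde g\in G_{n-m}\cong G_{n-m}\times 1 \subset G_n$ of $g$ acts trivially — but this is precisely the inner-automorphism computation rehearsed in the paragraphs before the lemma: since $V_n$ is a $G_n$-module and $\tilde g\in G_n$, the map $\xi\otimes\psi$ with $\xi(x)=x\tilde g^{-1}$, $\psi(v)=\tilde g v$ equals the map induced by $g$ on $E_\bullet G_n\otimes_{RN_n}V_n$, and because $V(\iota_1\oplus\id)$ already equalizes the $G_{n-m}$-action (that is the content of $V$ being a $U\G$-module, via \autoref{lem:Cmod criterion} applied to $V$ itself), the two composites agree. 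The main obstacle is bookkeeping: setting up the compatible system of resolutions $E_\bullet G_n$ and checking that all the identifications (the canonical iso on the restricted resolution, the $RQ_n$-equivariance of $\phi_n$, and the inner-automorphism triviality) are strictly compatible rather than merely up to homotopy — one should either work with a functorial model (normalized bar resolution) throughout, or carry the chain-homotopies along and note they do not affect the final maps on $H_i$.
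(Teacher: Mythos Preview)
Your overall strategy matches the paper's: define $W_n = H_i(N_n;V_n)$ with its $Q_n$-action, produce maps $\phi_n$ from $\iota_1\oplus\id_n$, and verify \autoref{lem:Cmod criterion}. Two points, however.

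First, your compatible system of resolutions is unnecessary overhead. The paper simply uses a \emph{single} free $RG_n$-resolution $E_*G_n$ to compute all the relevant homologies: since $E_*G_n$ is free over $RN_m$ for every $m\le n$, the chain map inducing $\phi_{m,n}$ is just $\id\otimes V(\iota_{n-m}\oplus\id_m)\colon E_*G_n\otimes_{N_m}V_m \to E_*G_n\otimes_{N_n}V_n$. No multiplicative compatibility between different $E_*G_m$'s is needed.

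Second, and more seriously, your final verification has a gap. You invoke ``the inner-automorphism computation rehearsed in the paragraphs before the lemma'' to conclude that a lift $\tilde g\in G_{n-m}\times 1\subset G_n$ acts trivially. But that computation shows triviality only for elements of $N_n$, and $\tilde g\notin N_n$ in general; indeed the preamble explicitly notes that for $g\in G\setminus N$ the map $\xi\otimes\psi$ is \emph{not} the identity on $E_*G\otimes_N M$. The observation that $\tilde g\cdot\phi(v)=\phi(v)$ (from $V$ being a $U\G$-module) reduces the problem to showing that $[x\tilde g^{-1}\otimes\phi(v)]=[x\otimes\phi(v)]$ in homology, but you do not supply this step, and it does not follow from what you have written: right multiplication by $\tilde g^{-1}$ is not an $N_n$-chain map of $E_*G_n$ (it is only $\phi_{\tilde g}$-twisted equivariant).

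The paper closes this gap by working at the \emph{source}: on $E_*G_n\otimes_{N_m}V_m$ the map $x\otimes v\mapsto x\tilde g^{-1}\otimes v$ \emph{is} well-defined, because $\tilde g\in G_{n-m}\oplus 1$ commutes with $N_m\subset 1\oplus G_m$; it is therefore an $N_m$-chain self-map of the free resolution $E_*G_n$, hence chain-homotopic to the identity. Combined with the commutative square
\[
\xymatrix{
E_*G_n \otimes_{N_m} V_m \ar[r]^{\id\otimes\phi}\ar[d]_{\cdot\tilde g^{-1}\otimes\id} & E_*G_n \otimes_{N_n} V_n\ar[d]^{\cdot\tilde g^{-1}\otimes\tilde g\cdot} \\
E_*G_n \otimes_{N_m} V_m \ar[r]^{\id\otimes\phi} & E_*G_n \otimes_{N_n} V_n
}
\]
(whose commutativity is exactly your observation $\tilde g\phi(v)=\phi(v)$), this shows the $Q_{n-m}$-action is trivial on the image of $\phi_{m,n}$. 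Passing to the smaller group $N_m$, where $\tilde g$ genuinely centralises, is the missing idea.
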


\begin{proof}
We will use \autoref{lem:Cmod criterion} for this proof.

We already know that for every $i\ge 0$ we have a sequence of $RQ_n$--representations. Let
\[ \phi_n \colon H_i(N_n;V_n) \longrightarrow H_i(N_{n+1};V_{n+1})\]
be induced by $\iota_1 \oplus \id_n$. On the chain complex level this is
\[ E_*G_{n+1} \otimes_{N_n} V_n \longrightarrow E_*G_{n+1} \otimes_{N_{n+1}} V_{n+1}\]
induced by $\phi = V(\iota_1\oplus \id_n) \colon V_n \to V_{n+1}$. 

That means that
\[ \phi\colon H_i(N_m; V_m) \longrightarrow H_i(N_n,V_n)\]
is induced by the map
\[ E_*G_n \otimes_{N_m} V_m \stackrel{\id\otimes \phi}{\longrightarrow} E_*G_n \otimes_{N_n} V_n.\]
Because $x\mapsto xg'$ for some $g' \in G_{n-m}$ is a $G_m$--automorphism of the free resolution $E_*G_n$, it is homotopic to the identity. By functoriality the homotopy descends to $E_*G_n \otimes_{N_m} V_m$. Therefore $x\otimes m \mapsto xg' \otimes m$ induces the identity on $H_i(N_m;V_m)$. By the following commutative diagram, the action of $G_{n-m}$ on $\phi(H_i(N_m;V_m))$ is trivial. This finishes the proof by application of \autoref{lem:Cmod criterion}.

\[\begin{gathered}[b]\xymatrix{
E_*G_n \otimes_{N_m} V_m \ar[r]^{\id\otimes \phi}\ar[d]^{\cdot (g')^{-1}\otimes \id} & E_*G_n \otimes_{N_n} V_n\ar[d]^{\cdot (g')^{-1}\otimes g'\cdot} \\
E_*G_n \otimes_{N_m} V_m \ar[r]^{\id\otimes \phi} & E_*G_n \otimes_{N_n} V_n
}\\[-\dp\strutbox]
\end{gathered}\qedhere\]
\end{proof}

The central technical tool of this section is the following spectral sequence.

\begin{prop}\label{prop:spectralsequence}
Let
\[ 1 \longrightarrow \mathcal N \longrightarrow \mathcal G \longrightarrow \mathcal Q \longrightarrow 1 \]
be a stability SES. Assume that $\G$, $\mathcal Q$, and $\G\to\mathcal Q$ are braided. Let $V$ be a $U\G$--module.
Then the two spectral sequences constructed from the double complex
\[ E_* G_n \otimes_{RN_n} \widetilde C_*^{\mathcal G} (V)_n\]
are
\[E^1_{pq} \cong E_p G_n \otimes_{RN_n} \widetilde H_q^{\mathcal G} (V)_n\]
and 
\[ E^2_{pq} \cong \widetilde H_p^{\mathcal Q}( H_q(\mathcal N; V))_n. \]
\end{prop}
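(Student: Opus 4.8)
The plan is to compute the two spectral sequences associated to the first-quadrant double complex $C_{pq} = E_p G_n \otimes_{RN_n} \widetilde C_q^{\mathcal G}(V)_n$ by filtering in the two possible directions. Recall that $\widetilde C_q^{\mathcal G}(V)_n = K_q(V)_n$ up to the usual shift, and by \autoref{prop:SigmaMod} together with the discussion after it we have $\widetilde C_q^{\mathcal G}(V)_n \cong RG_n \otimes_{RG_{n-q-1}} V_{n-q-1} \cong \Ind_{G_{n-q-1}}^{G_n} V_{n-q-1}$ as an $RG_n$-module, hence in particular as a left $RN_n$-module it is a direct sum of modules induced from subgroups. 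The complex $E_* G_n$ is a free $RG_n$-resolution of $R$, so it is also a free (hence $RN_n$-projective) resolution of $R$ over $RN_n$.

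First I would filter by the $q$-direction (columns), i.e. take homology of the double complex with respect to the $\widetilde C_*^{\mathcal G}$-differential first. Since $E_p G_n$ is a free, in particular flat, right $RG_n$-module and hence a flat right $RN_n$-module, tensoring the chain complex $\widetilde C_*^{\mathcal G}(V)_n$ of left $RN_n$-modules with $E_p G_n$ over $RN_n$ is exact, so the vertical homology is $E_p G_n \otimes_{RN_n} \widetilde H_q^{\mathcal G}(V)_n$. This is exactly the claimed $E^1_{pq}$ for the first spectral sequence. (One then reads off $E^2_{pq} = H_p\big(E_* G_n \otimes_{RN_n} \widetilde H_q^{\mathcal G}(V)_n\big)$, which by definition computes $H_p(N_n; \widetilde H_q^{\mathcal G}(V)_n)$ with the induced $RQ_n$-action; but the proposition only asserts the $E^1$-page in this direction, so this is a remark rather than part of what must be proved.)

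Next I would filter by the $p$-direction (rows), i.e. take homology with respect to the $E_* G_n$-differential first, in each fixed degree $q$. Here the key computation is
\[
H_p\big( E_* G_n \otimes_{RN_n} \widetilde C_q^{\mathcal G}(V)_n \big) \cong H_p\big(N_n; \widetilde C_q^{\mathcal G}(V)_n\big),
\]
by the definition of group homology, since $E_* G_n$ restricted to $RN_n$ is a free resolution of $R$ and the $RQ_n$-module structure on the result is exactly the one described in the preamble to \autoref{sec:stabSES} (namely $N_n g \cdot (x \otimes m) = x g^{-1} \otimes g m$). Now I need to identify this with $H_q(\mathcal N; V)$ applied levelwise and then with $\widetilde H_p^{\mathcal Q}$ of it. The crucial point is that $\widetilde C_q^{\mathcal G}(V)_n \cong \bigoplus_{f \in \Hom_{U\G}(q+1,n)} V_{n-q-1}$, and by Shapiro's lemma $H_p(N_n; \Ind_{G_{n-q-1}}^{G_n} V_{n-q-1}) \cong H_p\big(N_{n-q-1}'; V_{n-q-1}\big)$ — but one must be careful: the relevant subgroup of $G_n$ through which we induce is a conjugate of $1 \oplus G_{n-q-1}$, and intersecting with $N_n$ gives (a conjugate of) $N_{n-q-1}$, using that $\mathcal N \to \mathcal G \to \mathcal Q$ is a stability SES so $N_n \cap (1 \oplus G_{n-q-1}) = 1 \oplus N_{n-q-1}$. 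Hence the $E_1$-page (in this direction, which is the $E^2$-page of the spectral sequence as indexed in the statement once we note one differential has already been taken) in row $q$, column $p$ is
\[
\bigoplus_{f \in \Hom_{U\G}(q+1,n)} H_p\big(N_{n-q-1}; V_{n-q-1}\big) \;=\; \bigoplus_{f \in \Hom_{U\mathcal Q}(q+1,n)} H_p(\mathcal N; V)_{n-q-1},
\]
where the reindexing of the summing set from $\Hom_{U\G}$ to $\Hom_{U\mathcal Q}$ uses surjectivity of $G_n \to Q_n$ and that the fibers are $N$-cosets which get absorbed into the $N_n$-coinvariants; and this last expression is precisely $\widetilde C_q^{\mathcal Q}(H_p(\mathcal N; V))_n$ by the concrete description of the central stability complex of a $U\mathcal Q$-module. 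Taking the remaining differential then yields $\widetilde H_q^{\mathcal Q}(H_p(\mathcal N;V))_n$, i.e. (after swapping the roles of the indices $p,q$ to match the statement) $E^2_{pq} \cong \widetilde H_p^{\mathcal Q}(H_q(\mathcal N;V))_n$.

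The main obstacle will be the bookkeeping in the second filtration: verifying that the $RQ_n$-action arising from the double complex after taking $E_*G_n$-homology really is the $U\mathcal Q$-module structure of $H_q(\mathcal N;V)$ from the earlier lemma, and that the identification of the free summands is compatible with the simplicial face maps so that the $q$-differential on $\bigoplus_{f} H_p(\mathcal N;V)_{n-q-1}$ is genuinely the central-stability differential of the $U\mathcal Q$-module $H_q(\mathcal N; V)$. Concretely I would trace through one face map $d_i = (b_{1,i} \oplus \id_{p-i})G_1$ using the explicit formula in \autoref{prop:SigmaMod}, check that conjugation by the representing braid element commutes (up to chain homotopy, which descends by functoriality of $E_*G_n$) with the $N_n$-tensor, and confirm that the braiding $b_{1,i}$ lands in $Q_n$ correctly so that the induced map on homology is the $d_i$ of $\widetilde C_*^{\mathcal Q}(H_q(\mathcal N;V))_n$ — this is where the hypothesis that $\G \to \mathcal Q$ is braided is used. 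Once these identifications are in place, both spectral sequences arise from the standard two filtrations of the same double complex and therefore converge to the same limit, which gives the comparison asserted in the surrounding text.
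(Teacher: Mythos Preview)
Your approach is essentially the paper's. For the first spectral sequence you and the paper both just use that $E_pG_n$ is free, hence flat, over $RN_n$. For the second, the paper carries out your Mackey/Shapiro step by hand: it writes down an explicit $Q_n$--equivariant chain-level isomorphism
\[
E_*G_n \otimes_{RN_n} RG_n \otimes_{RG_{n-p-1}} V_{n-p-1}\;\cong\; RQ_n \otimes_{RQ_{n-p-1}}\big(E_*G_n \otimes_{RN_{n-p-1}} V_{n-p-1}\big),
\]
namely $x\otimes g\otimes v\mapsto N_ng\otimes(xg\otimes v)$, and then checks by a one-line diagram chase that the face maps $g_i=\id_{n-p-1}\oplus b_{1,i}\oplus\id_{p-i}$ go to the corresponding face maps of $\widetilde C^{\mathcal Q}_*$. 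Working at the chain level rather than invoking Shapiro on homology is what makes the differential check clean; in your outline that check is the part you (correctly) flag as the main obstacle.

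Two bookkeeping corrections. First, your displayed equality
\[
\bigoplus_{f\in\Hom_{U\G}(q+1,n)}H_p(N_{n-q-1};V_{n-q-1})\;=\;\bigoplus_{f\in\Hom_{U\mathcal Q}(q+1,n)}H_p(\mathcal N;V)_{n-q-1}
\]
is not literally true: the two index sets have different sizes. The Mackey decomposition of $\Res_{N_n}\Ind_{G_{n-q-1}}^{G_n}$ is already indexed by $N_n\backslash G_n/G_{n-q-1}\cong Q_n/Q_{n-q-1}=\Hom_{U\mathcal Q}(q+1,n)$, so you land on the right-hand side directly; there is no separate ``reindexing'' step. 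Second, in the paper's conventions the subgroup through which one induces is $G_{n-q-1}\times 1$, not $1\times G_{n-q-1}$; the intersection identity $N_n\cap(G_{n-q-1}\times 1)=N_{n-q-1}\times 1$ holds for the same reason you give. Neither of these is a genuine gap.
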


\begin{proof}
The first spectral sequence is given by
\[ E^0_{pq} = E_p G_n \otimes_{RN_n} \widetilde C_q^{\mathcal G} (V)_n.\]
Therefore
\[E^1_{pq} \cong E_p G_n \otimes_{RN_n} \widetilde H_q^{\mathcal G} (V)_n.\]

The other spectral sequence is given by
\begin{multline*}
E^1_{pq} \cong H_q(N_n; \widetilde C_p^{\mathcal G}(V)_n) = H_q(N_n; RG_n \otimes_{RG_{n-(p+1)}} V_{n-(p+1)}) \\= H_q( E_*G_n \otimes_{RN_n} RG_n \otimes_{RG_{n-(p+1)}} V_{n-(p+1)}).
\end{multline*}
The differential $d^1$ is induced by the map
\begin{align*}
E_*G_n \otimes_{RN_n} RG_n \otimes_{RG_{n-(p+1)}} V_{n-(p+1)} &\longrightarrow E_*G_n \otimes_{RN_n} RG_n \otimes_{RG_{n-p}} V_{n-p}\\
x \otimes g \otimes v & \longmapsto \sum (-1)^i x \otimes gg_i \otimes \phi'(v)
\end{align*}
with $g_i = \id_{n-p-1}\oplus b_{1,i}\oplus \id_{p-i}$ and $\phi' = V(\id_{n-p}\oplus \iota_1)$.

Then there is a $Q_n$--equivariant isomorphism
\[ E_*G_n \otimes_{RN_n} RG_n \otimes_{RG_{n-(p+1)}} V_{n-(p+1)}  \cong RQ_n \otimes_{RQ_{n-(p+1)}} \big(E_*G_n \otimes_{RN_{n-(p+1)}} V_{n-(p+1)}\big)\]
given by
\begin{align*}
x\otimes g \otimes v &\longmapsto N_ng \otimes (xg \otimes v)\\
xg^{-1} \otimes g \otimes v& \longmapsfrom N_ng \otimes (x \otimes v).
\end{align*}
Therefore
\[ E_{pq}^1 \cong \widetilde C^{\mathcal Q}_p (H_i(\mathcal N;V))_n.\]

Because the following diagram commutes, $d^1$ coincides with the differential of $\widetilde C^{\mathcal Q}_*$. 

%\[\xymatrix{
%E_*G_n \otimes_{RN_n} RG_n \otimes_{RG_{n-(p+1)}} V_{n-(p+1)}  \ar[r]\ar[d] & RQ_n \otimes_{RQ_{n-(p+1)}} \big(E_*G_n \otimes_{N_{n-(p+1)}} V_{n-(p+1)}\big)\ar[dd]\\
%E_*G_n \otimes_{RN_n} RG_n \otimes_{RG_{n-(p+1)}} V_{n-(p+1)} \ar[d] \\
%E_*G_n \otimes_{RN_n} RG_n \otimes_{RG_{n-p}} V_{n-p} \ar[r]\ & RQ_n \otimes_{RQ_{n-(p+1)}} \big(E_*G_n \otimes_{N_{n-p}} V_{n-p}\big)
%}\]
%is given by
%\[\xymatrix{
%x\otimes g \otimes m \ar@{|->}[r]\ar@{|->}[d] & N_ng \otimes (xg \otimes m)\ar@{|->}[dd]\\
%xg_i^{-1} \otimes g\otimes m \ar@{|->}[d] \\
%xg_i^{-1}\otimes gg_i \otimes \phi(m) \ar@{|->}[r]\ & N_ngg_i \otimes (xg \otimes \phi(m)).
%}\]
\[\xymatrix{
E_*G_n \otimes_{RN_n} RG_n \otimes_{RG_{n-(p+1)}} V_{n-(p+1)}  \ar[r]\ar[dd] & RQ_n \otimes_{RQ_{n-(p+1)}} \big(E_*G_n \otimes_{N_{n-(p+1)}} V_{n-(p+1)}\big)\ar[d]\\
& RQ_n \otimes_{RQ_{n-(p+1)}} \big(E_*G_n \otimes_{N_{n-(p+1)}} V_{n-(p+1)}\big) \ar[d] \\
E_*G_n \otimes_{RN_n} RG_n \otimes_{RG_{n-p}} V_{n-p} \ar[r]\ & RQ_n \otimes_{RQ_{n-(p+1)}} \big(E_*G_n \otimes_{N_{n-p}} V_{n-p}\big)
}\]
is given by
%\[\xymatrix{
%x\otimes g \otimes m \ar@{|->}[r]\ar@{|->}[d] & N_ng \otimes (xg \otimes m)\ar@{|->}[dd]\\
%xg_i^{-1} \otimes g\otimes m \ar@{|->}[d] \\
%xg_i^{-1}\otimes gg_i \otimes \phi(m) \ar@{|->}[r]\ & N_ngg_i \otimes (xg \otimes \phi(m)).
%}\]
\[\begin{gathered}[b]\xymatrix{
x\otimes g \otimes v \ar@{|->}[r]\ar@{|->}[dd] & N_ng \otimes (xg \otimes v)\ar@{|->}[d]\\
&N_ng \otimes (xgg_i \otimes v) \ar@{|->}[d] \\
x\otimes gg_i \otimes \phi'(v) \ar@{|->}[r]\ & N_ngg_i \otimes (xgg_i \otimes \phi'(v)).
}\\[-\dp\strutbox]\end{gathered}\qedhere\]
\end{proof}

The idea for this spectral sequence can be found in \cite{PS}. The following immediate consequence illustrates how it can be used.

\begin{cor}
Let
\[ 1 \longrightarrow \mathcal N \longrightarrow \mathcal G \longrightarrow \mathcal Q \longrightarrow 1 \]
be a stability SES. Assume that $\mathcal G$, $\mathcal Q$, and $\G\to\mathcal Q$ are braided. 
For each $n$,  there is a spectral sequence 
\[ E^2_{p,q} \cong \widetilde H^{\mathcal Q}_p( H_q(\mathcal N))_n \]
that converges to zero for $ p+q \le  \frac{n-a-1}{k}$ if $U\G$ satisfies  \con{H3($\infty$)}. 
\end{cor}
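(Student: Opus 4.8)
The plan is to apply \autoref{prop:spectralsequence} to the constant module $V = R\Hom_{U\G}(0,-)$ and to extract the vanishing from the \emph{first} of its two spectral sequences. With this choice $H_q(\mathcal N;V) = H_q(\mathcal N)$ by definition, so the second spectral sequence of \autoref{prop:spectralsequence} is exactly the one in the statement, $E^2_{pq} \cong \widetilde H^{\mathcal Q}_p(H_q(\mathcal N))_n$, while $\widetilde H^{\mathcal G}_q(V)_n = \widetilde H_q(R\Hom_{U\G}(0,-))_n$ is precisely the homology controlled by \con{H3($\infty$)}. Both spectral sequences abut to the homology of the total complex $\Tot\big(E_*G_n \otimes_{RN_n} \widetilde C_*^{\mathcal G}(V)_n\big)$, so it suffices to show this homology vanishes in the stated range.

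Next I would feed in \con{H3($\infty$)}: $\widetilde H^{\mathcal G}_q(V)_n = 0$ whenever $n > kq + a$. Fix a total degree $m \ge -1$. On the antidiagonal $p + q = m$ of the first spectral sequence every index has $q \le m$ (with $p \ge 0$, $q \ge -1$), hence $kq + a \le km + a$; so if $n > km + a$ — equivalently $m \le \tfrac{n-a-1}{k}$ — then $E^1_{pq} = E_pG_n \otimes_{RN_n}\widetilde H^{\mathcal G}_q(V)_n = 0$ for all such $(p,q)$, and therefore $H_m$ of the total complex vanishes. Since the second spectral sequence converges to the same graded object, it converges to zero in all total degrees $p + q \le \tfrac{n-a-1}{k}$, which is the assertion.

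The only point requiring care — rather than real work — is the bookkeeping around the augmentation degree $q = -1$ (the complex $\widetilde C^{\mathcal G}_*$, and hence the total complex, begins in degree $-1$) and the passage between the inequalities $n > km + a$ and $m \le \tfrac{n-a-1}{k}$ for the boundary case; everything of substance is already packaged in \autoref{prop:spectralsequence} and in the definition of \con{H3($\infty$)}.
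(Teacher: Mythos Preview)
Your proof is correct and follows essentially the same approach as the paper: apply \autoref{prop:spectralsequence} with $V = R\Hom_{U\G}(0,-)$, use \con{H3($\infty$)} to kill the first spectral sequence along each diagonal $p+q\le \frac{n-a-1}{k}$, and conclude the same vanishing for the second. You have simply spelled out the bookkeeping (the bound $q\le m$ on the diagonal and the translation between $n>km+a$ and $m\le\frac{n-a-1}{k}$) that the paper leaves implicit.
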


\begin{proof}
The two spectral sequences in \autoref{prop:spectralsequence} converge to the same limit. The first will converge to zero for $n\ge k\cdot q +a+1$ by \con{H3($\infty$)} if we set $V$ to be $R\Hom(0,-)$. In particular, the diagonals $p+q \le \frac{n-a-1}k$ converge to zero. Thus the same is true for the second spectral sequence. 
\end{proof}

 The proof of \autoref{thmC} uses the same methods as Putman and Sam in \cite[Theorem 5.13]{PS}.

\begin{proof}[Proof of \autoref{thmC}]
We may use the spectral sequences from  \autoref{prop:spectralsequence}. The first spectral sequence implies that both converge stably to zero, because $V$ is stably acyclic. We will now prove the result by induction on $i$. 

For $i=0$ note that because 
\[\doublequot{N_n}{G_n}{G_{n-m}} \cong \quot{Q_n }{Q_{n-m}}\]
we get
\[ H_0(\mathcal N;R\Hom_{U\G}(m,-)) \cong R\Hom_{U{\mathcal Q}}(m,-).\]
Because $H_0$ is right exact it follows that $H_0(\mathcal N;V)$ is a finitely generated $U{\mathcal Q}$--module.

Let $i>0$ and assume $H_q(\mathcal N;V)$ is finitely generated for all $q<i$. From the Noetherian condition we get that $H_q(\mathcal N;V)$ is stably acyclic for all $q<i$. Now using the second spectral sequence from  \autoref{prop:spectralsequence}, we infer $E^2_{-1i} = \widetilde H_{-1}(H_i(\mathcal N;V))$ is stably zero, because the only incoming and outgoing differentials are all stably zero.  \autoref{prop:fg} and the assumption that $H_i(\mathcal N;V)_n$ is a finitely generated $R$--module for all $n$ implies that $H_i(\mathcal N;V)$ is finitely generated.
\end{proof}

%%%%%%%%%% Section  %%%%%%%%%%%

\section{Quillen's argument revisited}\label{sec:Quillen}

In this section we will solely prove \autoref{thmD} from the introduction.

\begin{proof}[Proof of \autoref{thmD}]
We consider the stability SES
\[ 1  \longrightarrow \G \stackrel{\id}{\longrightarrow}  \G \longrightarrow \NN \longrightarrow 1\]
where $\NN$ is the category whose objects are the natural numbers and whose morphisms are only the identity maps.  In \autoref{prop:spectralsequence}, we constructed two spectral sequences for this situation. From the first
\[ E^1_{pq} =  E_pG_{n+1} \otimes_{G_{n+1}} \widetilde H^\G_q V_{n+1}\]
which is zero for $n+1> k\cdot q +a$, we see that both spectral sequences converge to zero when $p+q \le  \frac{ n- a}k$. The other spectral sequence is given by
\[ E^1_{pq} \cong \widetilde C^\NN_p(H_q(\G;   V))_{n+1} \cong H_q(G_{n+1-(p+1)};V_{n+1-(p+1)}).\]
The differentials are easy to understand:
\[ H_q(G_{n+1-(p+1)};V_{n+1-(p+1)}) \to H_q(G_{n+1-p};V_{n+1-p})\]
is zero if $p$ is odd and it is $\phi_*$ if $p$ is even. In particular $E^1_{0,i} \to E^1_{-1,i}$ is the stabilization map
\[ \phi_*\colon H_i(G_n; V_n) \longrightarrow H_i(G_{n+1};V_{n+1}).\]

Assume $n\ge ki +a -1$, we want to prove that $\phi_*$ is surjective. We know that $E^\infty_{-1,i} = 0$ if $i-1\le \frac{n-a}k$, in particular when $n\ge ki+ a-1$. We want to use induction to show that $E^2_{pq} = 0$ when $p+q=i$ and $q<i$. This would imply that $E^2_{-1,i}$ already vanishes and $\phi_*$ is surjective. If $p$ is even, to show that $E^2_{pq}=0$ it suffices to show that
\[ E^2_{p+1,q} \cong H_q(G_{n-p-1};V_{n-p-1}) \longrightarrow E^2_{pq} \cong H_q(G_{n-p};V_{n-p})\]
is surjective. By induction this is the case if $n-p-1 \ge kq +a -1$. If $p$ is odd, it suffices to show that
\[ E^2_{pq} \cong H_q(G_{n-p};V_{n-p}) \longrightarrow E^2_{p-1,q} \cong H_q(G_{n-p+1};V_{n-p+1})\]
is injective. By induction this is true if $n-p \ge kq + a $, which is the same condition. We know that
\[ p+kq \le i + (k-1)(i-1) = ki  - k +1 \le n -a \]
which is what we need.

Assume $n\ge ki+a$, we want to prove that $\phi_*$ is injective. We know that  $E^\infty_{0,i}  =  0$ for $i\ge \frac{n-a}k$, in particular when $n\ge ki +a$. Again we prove $E^2_{pq} = 0$ when $p+q=i+1$ and $q<i$ by induction, which implies that $E^2_{0,i} = 0$ and $\phi_*$ is injective. We already computed that $E^2_{pq}$ vanishes when $n-p \ge kq +a$. And we calculate
\[ p+kq \le i+1 + (k-1)(i-1) = ki -k +2 \le n -a.\qedhere\]
\end{proof}

%%%%%%%%%% Section  %%%%%%%%%%%

%\section{Applications}

\bibliographystyle{halpha}
\bibliography{repstab}

\end{document}